\definecolor{RefColor}{rgb}{0,0,.65}
\theoremstyle{plain}
\newtheorem{theorem}{Theorem}
\newtheorem{lemma}[theorem]{Lemma}
\newtheorem{proposition}[theorem]{Proposition}
\newtheorem{corollary}[theorem]{Corollary}
\newtheorem*{fact*}{Fact}
\theoremstyle{definition}
\newtheorem{remark}[theorem]{Remark}
\declaretheoremstyle[postheadspace=\newline,
  mdframed={backgroundcolor=gray!10!white, 
    hidealllines=true, 
    innertopmargin=2pt, 
    innerbottommargin=4pt, 
    skipabove=8pt,
    skipbelow=10pt,
  nobreak=true}
]{grayboxed} 
\declaretheorem[style=plain,qed=$\triangleleft$]{auxtheorem}
\declaretheorem[style=grayboxed,sibling=auxtheorem]{algorithm}
\declaretheorem[style=grayboxed,name=Algorithm,sibling=auxtheorem]{algorithmdash}
\declaretheoremstyle[mdframed={backgroundcolor=gray!10!white, 
    hidealllines=true, 
    innertopmargin=2pt, 
    innerbottommargin=1pt, 
    skipabove=3pt,
    skipbelow=10pt,
  nobreak=true}
]{modgrayboxed} 
\tikzstyle{mybraces}=[mirrorbrace/.style={
\newcommand{\eqnum}{\leavevmode\hfill\refstepcounter{equation}\textup{\tagform@{\theequation}}}
\begin{document}
\begin{frontmatter}
  \title{Subsampling large graphs and invariance in networks}
  \author{\fnms{Peter} \snm{Orbanz}\corref{}\ead[label=e2]{porbanz@stat.columbia.edu}}
  \affiliation{Columbia University}
    \address{Department of Statistics\\
      1255 Amsterdam Avenue\\
      New York, NY 10027, USA\\
      \printead{e2}
    }    
  \begin{abstract}
    Specify a randomized algorithm that, given a very large graph or network, extracts a random subgraph. What can we learn about the input graph from a single subsample? We derive laws of large numbers for the sampler output, by relating randomized subsampling to distributional invariance: Assuming an invariance holds is tantamount to assuming the sample has been generated by a specific algorithm. That in turn yields a notion of ergodicity. Sampling algorithms induce model classes---graphon models, sparse generalizations of exchangeable graphs, and random multigraphs with exchangeable edges can all be obtained in this manner, and we specialize our results to a number of examples. One class of sampling algorithms emerges as special: Roughly speaking, those defined as limits of random transformations drawn uniformly from certain sequences of groups. Some known pathologies of network models based on graphons are explained as a form of selection bias.
  \end{abstract}
  \maketitle
\end{frontmatter}

\newcommand{\Alg}[2]{S_{#1\rightarrow #2}}
\newcommand{\AlgInf}[1]{S_{#1}}
\def\PAlg{\mathbb{P}_{\!\ind{alg}}}
\def\U{\mathbb{U}}

\def\condind{{\perp\!\!\!\perp}}
\def\ie{i.e.\ }
\def\eg{e.g.\ }
\def\kword#1{\textbf{#1}}

\def\pull#1{{#1}^{\text{\fontsize{.4em}{.5em}\selectfont\rm \#}}}
\def\push#1{{#1}_{\!\text{\fontsize{.4em}{.5em}\selectfont\rm \#}}}

\def\abstspace{\Omega}
\def\abstmeasure{\mathbb{P}}
\def\xspace{\mathbf{X}}
\def\tspace{\mathbf{T}}
\def\sspace{\mathbf{S}}
\def\gspace{\mathbf{G}}
\def\gset{\mathcal{G}}
\def\gsetall{\gset_{\ast}}

\def\Law{\mathcal{L}}
\def\mean{\mathbb{E}}
\def\SG{\textsc{Sym}}
\def\SGinf{\mathbb{S}_{\infty}}

\def\INV{\textsc{Inv}}

\def\ex{\text{\rm ex}}
\def\muP{\mu_{\ind{P}}}
\def\nuP{\nu_{\ind{P}}}
\def\pr{\textrm{pr}}
\def\Pr{\textrm{Pr}}
\def\braces#1{{\lbrace #1 \rbrace}}
\def\bigbraces#1{{\big\lbrace #1 \big\rbrace}}
\def\Bigbraces#1{{\Big\lbrace #1 \Big\rbrace}}
\def\indfunction{\mathbb{I}}
\def\ddisc{d_{\mbox{\tiny 0-1}}}
\def\gsetN{\gset_{\mathbb{N}}}
\def\balls{\mathbf{B}}
\def\ball{B}

\def\tkernel{\mathbf{t}}
\def\pMeas{\mathbf{PM}}
\def\borel{\mathcal{B}}
\newcommand{\argdot}{{\,\vcenter{\hbox{\tiny$\bullet$}}\,}}
\def\ind#1{\text{\rm\tiny #1}}
\def\equivS{\equiv_{\ind{S}}}
\def\classS#1{[#1]_{\ind{S}}}
\def\Aut{\textrm{Aut}}
\def\Aut{\text{\sc Aut}}
\def\sigmainv{\sigma_{\ind{S}}}
\def\bsigmainv{\overline{\sigma}_{\ind{IN\!V}}}
\def\suffkernel{\mathbf{p}}

\def\equdist{\stackrel{\text{\rm\tiny d}}{=}}
\def\equas{=_{\text{\rm\tiny a.s.}}}
\def\weakly{\stackrel{\text{\rm\tiny w}}{\longrightarrow}}

\def\empavg{\mathbb{F}}
\def\Haar{\lambda}
\def\Folner{F{\o}lner }

\def\rest[#1]#2{#2\big\vert_{#1}}
\def\restk#1{\rest[k]{#1}}
\def\restn#1{\rest[n]{#1}}
\def\textrest[#1]#2{#2\vert_{_{#1}}}
\def\textrestn#1{\textrest[n]{#1}}
\def\textrestk#1{\textrest[k]{#1}}
\def\bitstring{\gamma}
\def\Bitstring{\Gamma}
\def\bitlaw{\mathbb{Q}}
\def\ERG{\mathcal{E}}
\def\ERG{{\text{\sc Erg}}}

\def\abstfield{\mathcal{A}}
\def\Uniform{\text{\rm Uniform}}
\def\Bernoulli{\text{\rm Bernoulli}}
\def\simiid{\sim_{\mbox{\tiny iid}}}
\def\W{\mathbf{W}}
\def\hd{\tau}

\def\iid{i.i.d.\ }

\def\sbgraphon{v}
\def\Sbgraphon{V}
\def\sbgraphonspace{\mathbf{V}}
\def\sbquotientspace{\hat{\mathbf{V}}}
\def\graphon{w}
\def\Graphon{W}
\def\mpartition{\mathbf{s}}
\def\Mpartition{\mathbf{S}}
\def\mpartitionspace{\mathcal{S}}
\def\indfunction{\mathbb{I}}

\def\graphonspace{\mathbf{W}}
\def\kernel{\mathbf{p}}

\def\Erdos{Erd\H{o}s}
\def\Renyi{R\'enyi}

\def\Aplus{\A^{\!\ind{+}}}
\def\A{\mathbf{A}}
\def\SGsub#1{\SG_{\ind{[}#1\ind{]}}}

\newcommand{\todo}[1]{\textcolor{red}{{\tt #1}}}
\renewcommand{\labelitemi}{{\raisebox{0.28ex}{\tiny$\bullet$}}}

\def\hom{p}
\def\P{\abstmeasure}
\def\mysetminus{\!\smallsetminus\!}
\def\IN{\mathbf{\mathbb{Y}}}
\def\OUT{\mathbf{\mathbb{X}}}
\def\subIN{\mathcal{Y}}
\def\gset{\mathcal{X}}
\def\sset{\mathcal{S}}
\def\sspace{\mathbf{S}}
\def\Lone{\mathbf{L}_1}
\def\equivIN{\equiv_{\ind{in}}}
\def\classIN#1{[#1]_{\ind{in}}}
\def\hset{\mathcal{H}}
\def\sp#1{\!\left< #1 \right>}
\def\model{\mathcal{P}}
\def\equivW{\stackrel{\textrm{\tiny w}}{\equiv}}
\def\equivS{\equiv_{\ind{out}}}
\def\equivS{\stackrel{\resizebox{!}{.24em}{\textsc{out}}}{\equiv}}
\def\equivIN{\equiv_{\ind{S}}}
\def\hom{t}
\def\REL{\textsc{Rel}}
\def\PRE{\textsc{Pre}}
\def\group{\mathbf{G}}
\def\emptyset{\varnothing}
\def\bbN{\mathbb{N}}

\def\inclusion{\iota}
\def\projA{\overline{\A}}

\def\Triag{\textsc{Triag}}
\def\Diag{\textsc{Diag}}
\def\deg{\text{\rm deg}}
\def\mult{\text{\rm mult}}
\def\rdeg{\overline{d}}
\def\rmult{\overline{m}}
\def\bbN{\mathbb{N}}
\def\edgeset{\textsc{Edges}}
\def\vertexset{\textsc{Vertices}}

\def\support{\textsc{Supp}}
\def\SigmaS{\Sigma}
\def\Null{\textsc{Null}}
\def\bSigmaS{\overline{\Sigma}}
\def\FSym{\mathbb{S}_{\mbox{\tiny F}}}
\def\Sym#1{\mathbb{S}_{#1}}
\def\equas{{\stackrel{\;\resizebox{!}{.24em}{\text{\rm a.s.}}\;}{=}}}
\def\fam{\mathcal{T}}
\def\bsigma{\overline{\sigma}}

\section{Introduction}

Consider a large graph or network, and invent a randomized algorithm that generates a subgraph. 
The algorithm can be understood as a model of an experimental design---a protocol used to collect data in a survey, 
or to sample data from a network---or as an actual program extracting data from a data base.
Use the algorithm to extract a sample graph, small relative to input size.
What information can be obtained from a single such sample? Certainly, that should depend on the algorithm. 
We approach the problem starting from a simple observation:
Fix a sequence $y_n$ with $n$ entries. Generate a random sequence
$X_k$ by sampling $k$ elements of $y_n$, uniformly and independently with replacement.
Then $X_k$ is exchangeable, and it remains so
under a suitable distributional limit ${n\rightarrow\infty}$ in input size.
Similarly, one can generate an exchangeable graph (a random graph whose law is invariant under 
permutations of the vertex set) by sampling vertices of
an input graph independently, and extracting the induced subgraph. 
The resulting class of random graphs is equivalent to graphon models 
\citep{Borgs:Chayes:Lovasz:Sos:Vesztergombi:2008,Borgs:Chayes:Lovasz:Sos:Vesztergombi:2012:1,Diaconis:Janson:2007}.
Exchangeability is an example of distributional symmetry, that is, invariance of a
distribution under a class of transformations \citep{Kallenberg:2005}. 
Thus, a randomized algorithm (independent selection of elements) induces a symmetry 
principle (exchangeability of elements), and when applied to graphs, it also induces a model class (graphon models).
The purpose of this work is to show how the interplay of these properties answers what can be learned from a 
single subgraph, both for the example above and for other algorithms.

\subsection{Overview}

Start with a large graph $y$. The graph may be unadorned, or a ``network'' in which
each edge and vertex is associated with some mark or observed value. We always assume that the ``initial
subgraph of size $n$'', denoted $\textrestn{y}$, is unambiguously defined. This may be the induced subgraph
on the first $n$ vertices (if vertices are enumerated), the subgraph incident to the first $n$ edges
(if edges are enumerated), the neighborhood of size $n$ around a fixed root, et cetera; details follow 
in \cref{sec:spaces}.
Now invent a randomized algorithm that generates a graph of size ${k<n}$
from the input ${\textrestn{y}}$, and denote this random graph $\Alg{n}{k}(y)$. For example:
\begin{algorithm}
  \label{alg:uvertex:simple}
  \begin{tabular}{rl}
    \text{{\small i.})} & \text{Select $k$ vertices of $\textrestn{y}$ 
      independently and uniformly without replacement.}\\
    \text{{\small ii.})} & \text{Extract the induced subgraph ${\Alg{n}{k}(\textrestn{y})}$ of 
      $\textrestn{y}$ on these vertices.}\\
    \text{{\small iii.})} & \text{Label the vertices of ${\Alg{n}{k}(\textrestn{y})}$ 
      by ${1,\ldots,k}$ in order of appearance.}\\
  \end{tabular}
\end{algorithm}
We assume $y$ is so large that it is modeled as infinite,
and hence ask for a limit in input size:
Is there a random variable ${\AlgInf{\infty}(y)}$ that can be regarded
as a sample of infinite size from an infinite input graph $y$? That is, a variable
such that, for each output
size $k$, the restriction ${\textrestk{\AlgInf{\infty}(y)}}$ is the distributional limit in input size $n$,
\begin{equation*}
  \Alg{n}{k}(y)\quad\xrightarrow{\;\text{\tiny d}\;}\quad\textrestk{\AlgInf{\infty}(y)}
  \qquad\qquad\text{ as }\quad n\rightarrow\infty\;.
\end{equation*}
Necessary and sufficient conditions are given in \cref{result:alg:limit}. 
By sampling and passing to the limit, some information about $y$ is typically lost---$y$ cannot
be reconstructed precisely from the sample output, which is of some importance to our purposes;
see \cref{remark:population}.

The main problem we consider is inference from a single realization:
By a \emph{model} on $\OUT$, we mean a set $\mathcal{P}$ of probability measures on $\OUT$.
Observed is a single graph $X_k$ of size $k$. We choose a sampling algorithm as a modeling assumption
on how the data was generated, and take its limit as above. For a given set $\subIN$ of input graphs,
the algorithm induces a model
\begin{equation*}
  \mathcal{P}:=\braces{P_y|y\in\subIN} \qquad\text{ where }\qquad P_y:=\Law(\AlgInf{\infty}(y))\;.
\end{equation*}
The observed graph $X_k$ is modeled as a sample ${X_k=\AlgInf{k}(y)}$
of size $n$ from an infinite input graph. Even as ${n\rightarrow\infty}$, this means that only
a single draw from the model distribution is available. When $k$ is finite, this is further constrained
to a partial realization. 

We have already observed the sampler output $\AlgInf{\infty}(y)$ is exchangeable for certain algorithms,
or more generally invariant under some family of transformations of $\OUT$.
A fundamental principle of ergodic theory is that invariance under a suitable family of transformations 
$\fam$ defines a set $\ERG(\fam)$ of distinguished probability measures, the \emph{$\fam$-ergodic} measures.
\cref{sec:symmetry} reviews the relevant concepts. The utility of invariance to our problem is that
\emph{ergodic measures of suitable transformation families are distinguishable
  by a single realization}:
If a model is chosen as a subset $\mathcal{P}$ of these $\fam$-ergodic measures, and a single
draw ${X\sim P}$ from some (unknown) element $P$ of $\mathcal{P}$ is observed, the distribution
$P$ is unambiguously determined by $X$. 
The following consequence is made precise in \cref{sec:sampling:symmetry}:
\begin{center}
\emph{If the limiting sampler can be shown to have a suitable invariance property, then
  inference from a single realization is possible in principle.}
\end{center}

We make the provision \emph{in principle} as the realization $\AlgInf{\infty}(y)$ only
determines $P_y$ abstractly---for an actual
algorithm, there is no obvious way to derive $P_y$ from an observed graph.
We hence ask under what conditions the expectations
\begin{equation*}
  \mean[f(\AlgInf{\infty}(y))]=P_y(f)\qquad\text{ for some }f\in\Lone(P_y)
\end{equation*}
can be computed or approximated given an observed realization. (Here and throughout,
we use the notation ${P(f)=\int fdP}$.)
\cref{lemma:main} provides the following answer:
If $\fam$ is specifically a group satisfying certain properties,
one can choose a specific sequence of finite subsets $\A_k$ of of $\fam$ and define
\begin{equation*}
  \empavg_k^x:=\frac{1}{|\A_k|}\sum_{t\in\A_k}\delta_{t(x)}
  \qquad\text{ hence }\qquad
  \empavg_k^x(f)=\frac{1}{|\A_k|}\sum_{t\in\A_k}f(t(x))\;.
\end{equation*}
The sequence $(\empavg_k^x)$ can be thought of as an empirical measure, and satisfies a law
of large numbers: If a sequence $(f_k)$ of functions converges almost
everywhere to a function $f$, then
\begin{equation}
  \label{intro:lln}
  \empavg_k^{\AlgInf{\infty}(y)}(f_k)\quad\xrightarrow{k\rightarrow\infty}\quad\mean[f(\AlgInf{\infty}(y))]
\end{equation}
under suitable conditions on the sampler.
\cref{lemma:main} formulates such convergence as a law of large numbers for symmetric random variables,
and subsumes several
known results on exchangeable structures, graphons, etc.

The graph $\AlgInf{\infty}(y)$ in \eqref{intro:lln} is typically infinite. To work with a 
finite sample $\AlgInf{k}(y)$, we formulate additional conditions on $\fam$ that let transformations
act on finite structures, which leads to a class of groups we call \emph{prefix actions}.
We then define sampling algorithms by randomly applying a transformation: Fix a prefix action $\fam$,
draw a random transformation $\Phi_n$ uniformly from
those elements of $\fam$ that affect only a subgraph of size $n$, and define
\begin{equation*}
  \Alg{n}{k}(y)=\textrestk{\Phi_n(\textrestn{y})}\;.
\end{equation*}
In words, randomly transform $\textrestn{y}$ using $\Phi_n$ and then truncate at output size ${k<n}$.
These algorithms turn out to be of particular interest: They induce various known
models---graphons, edge-exchangeable graphs, and others---and
generically satisfy $\fam$-invariance and other
non-trivial properties; see \cref{result:direct:union:sampler}. 
The law of large numbers strengthens to
\begin{equation}
  \label{intro:lln:finite}
  \empavg_k^{\AlgInf{k}(y)}(f_k)\quad\xrightarrow{k\rightarrow\infty}\quad\mean[f(y)]\;.
\end{equation}
In contrast to \eqref{intro:lln}, the approximation is now a function of a finite sample
of size $k$, and the right-hand side a functional of the input graph $y$, rather than of 
$\AlgInf{\infty}(y)$. See \cref{result:estimate:f:of:y}.

\begin{table}[t]
  \small
  \setlength{\abovecaptionskip}{-10pt}
  \setlength{\belowcaptionskip}{-5pt}
  \resizebox{\textwidth}{!}{
  \begin{tabular}{rlll}
    \toprule
    sec.
    & sampling scheme
    &
    symmetry
    &
    induced model class \\
    \midrule
    \ref{sec:graphons} & $k$ independent vertices
    &
    vertices exchangeable
    & 
    graphon models \citep{Borgs:Chayes:Lovasz:Sos:Vesztergombi:2008}
    \\[.3em]
    \ref{sec:KEGs} & select vertices by coin flips
    &
    underlying point process
    & 
    generalized graphons
    \\
    & + delete isolated vertices & exchangeable & 
    \citep{Borgs:Chayes:Cohn:Holden:2016:1,Veitch:Roy:2015:1}\\[.3em]
    \ref{sec:edge:exch:multigraphs} & $k$ independent edges 
    &
    edges exchangeable
    &
    ``edge-exchangeable graphs''\\
    & (input multigraph) & &     
    \citep{Crane:Dempsey:2016:1,Cai:Campbell:Broderick:2016:1}\\
    \ref{sec:benjamini:schramm}
    &
    neighborhood of random vertex
    &
    involution invariance \citep{Aldous:Lyons:2007:1}
    &
    certain local weak limits \citep{Benjamini:Schramm:2001:1}
    \\
    \bottomrule
    \label{tab:symmetry}
  \end{tabular}
  }
  \caption{}
  \label{tab:symmetries}
\end{table}

With the general results in place, we consider specific algorithms. 
In some cases, the algorithm induces a known class of random graphs as its
family ${\braces{P_y|y\in\subIN}}$ of possible output distributions; see
\cref{tab:symmetries}.
\cref{sec:sampling:vertices} concerns \cref{alg:uvertex:simple}, exchangeable graphs,
and graphon models. We consider modifications of \cref{alg:uvertex:simple}, and show how
known misspecification problems that arise when graphons are used to model network data can be
explained as a form of selection bias.
\cref{sec:partitions} relates well-known properties of exchangeable sequences and partitions to
algorithms sampling from fixed sequences and partitions.
That serves as preparation for \cref{sec:sampling:edges}, on algorithms that select a random sequence of edges, 
and report the subgraph incident to those edges.
If the input graph $y$ is simple, a property of $y$ can be estimated from the
sample output if and only if it is a function of the degree sequence of $y$.
If $y$ is a multigraph and the limiting relative multiplicities of its edges sum to 1, 
the algorithm generates edge-exchangeable graphs in the sense of 
\citep{Crane:Dempsey:2016:1,Cai:Campbell:Broderick:2016:1,Janson:2017:2}.
The two cases differ considerably: For simple input graphs, the sample output is completely determined
by vertex degrees, for multigraphs by edge multiplicities.
If a sampling algorithm explores a graph by following edges---as 
many actual experimental designs do, see e.g.\ \citep{Kolaczyk:2009:1}---the stochastic 
dependence between edges tends to
become more complicated, 
and understanding symmetries of such algorithms is much harder.
\cref{sec:neighborhoods} puts some previously known properties of methods algorithms 
that sample neighborhoods in the context of this work.

\subsection{Related work}
\hspace{-1em}\footnote{The ideas proposed here are used explicitly in forthcoming work of 
\citet{Veitch:Roy:2016:1} and \citet*{Borgs:Chayes:Cohn:Veitch:2017:1}, both already
available as preprints. Cf.\ \cref{sec:KEGs}.}\hspace{.5em}
Related previous work
largely falls into two categories: One concerns random graphs,
exchangeability, graph limits, and related topics. This work is mostly theoretical,
and intersects probability, combinatorics,
and mathematical statistics 
\citep{Borgs:Chayes:Lovasz:Sos:Vesztergombi:2008,Borgs:Chayes:Lovasz:Sos:Vesztergombi:2012:1,Diaconis:Janson:2007,
Kallenberg:2005,Ambroise:Matias:2012:1,Bickel:Chen:Levina:2011,Gao:Lu:Zhou:2015:1,Klopp:Tsybakov:Verzelen:2015:1,
Orbanz:Roy:2014}.
A question closely related to the problem considered here---what probabilistic symmetries aside from
exchangeability of vertices are applicable to networks analysis problems---was posed in \citep{Orbanz:Roy:2014}.
One possible solution, due to \citet{Caron:Fox:2014:1},
is to require exchangeability of an underlying point process. This idea can be used to 
generalize graph limits to sparse graphs
\citep{Veitch:Roy:2015:1,Borgs:Chayes:Cohn:Holden:2016:1}.
Another answer are random multigraphs whose edges, rather than vertices, are exchangeable
\citep{Crane:Dempsey:2016:1,Cai:Campbell:Broderick:2016:1,Janson:2017:2}. 
These are exchangeable partitions, in the sense of Kingman \citep{Kingman:1978:2},
of the upper triagonal of the set $\mathbb{N}^2$.
The second related category of work covers
experimental design in networks, and constitutes a substantial literature, see \citep{Kolaczyk:2009:1} for references. 
This literature tends to be more applied, although theoretical
results have been obtained \citep[e.g.][]{Li:Rohe:2015:1}. The two bodies of work are largely distinct,
with a few notable exceptions, such as the results on 
identifiability problems in \citep{Crane:Dempsey:2015:1}.

The specific problem considered here---the relationship between sampling and symmetry---seems largely unexplored,
but Aldous reasons about
exchangeability in terms of uniform sampling in \citep{Aldous:2010:1}, and, in joint work with Lyons 
\citep{Aldous:Lyons:2007:1}, extends the work of \citet{Benjamini:Schramm:2001:1} from a symmetry perspective.
(\citet{Kallenberg:1999} and other authors use the term \emph{sampling} differently, for
the explicit generation of a draw from a suitable representation of a distribution.)
More closely related from a technical perspective than experimental design in networks are two
ideas popular in combinatorics.
One is \emph{property testing}, which samples uniform parts of a large random structure $X$,
and then asks with what probability $X$ is in a certain class; see 
\citep{Alon:Spencer:2008}.
The second is used to define convergence of discrete structures:
Start with a set $\OUT$ of such structures, and equip it with some initial metric (so convergence
in distribution is defined).
Invent a randomized algorithm that generates a
``substructure'' $S(x)$ of a fixed structure ${x\in\OUT}$, and call a sequence ${(x_n)}$ convergent
if the laws ${\Law(S(x_n))}$ converge weakly.
The idea is exemplified by \citep{Benjamini:Schramm:2001:1}, but seems
to date back further, and is integral to the construction of graph limits:
The topology on dense graphs defined in this manner by 
\cref{alg:uvertex:simple} is metrizable, by the ``cut metric'' of 
Frieze and Kannan 
\citep{Borgs:Chayes:Lovasz:Sos:Vesztergombi:2008,Borgs:Chayes:Lovasz:Sos:Vesztergombi:2012:1}.

\newpage

\section{Spaces of graphs and discrete structures}
\label{sec:spaces}

Informally, we consider a space $\OUT$ of infinite structures, spaces $\OUT_n$ of finite
substructures of size $n$, and a map ${x\mapsto\textrestn{x}}$ that takes a structure of size $\geq n$
to its initial substructure of size $n$. For example, if $\OUT$ (resp.\ $\OUT_n$) consists of labeled graphs
vertex set $\mathbb{N}$ (resp.\ $[n]$), then ${\textrestn{\argdot}}$ may map to the induced subgraph on the
first $n$ vertices. More formally, these objects are defined as follows:
Let $\OUT_n$, for ${n\in\mathbb{N}}$, be countable sets. Require that for each pair ${m\leq n}$, there is a surjective map 
\begin{equation}
  \label{eq:restriction}
  \restn{\argdot}:\OUT_{n}\rightarrow\OUT_m
  \qquad\text{ such that }\qquad
  \rest[k]{\rest[m]{x_n}}=\rest[k]{x_n}\qquad\text{ if }x_n\in\OUT_n\text{ and }k\leq m\leq n\;.
\end{equation}
We write ${x_m\preceq x_n}$ whenever ${\textrest[m]{x_n}=x_m}$. In words, ${x_m}$ is a substructure of ${x_n}$.
An infinite sequence
\begin{equation}
  \label{eq:coherence}
  x:=(x_1,x_2,\ldots) 
  \quad\text{ with }\quad 
  x_n\in\OUT_n 
  \quad\text{ and }\quad 
  x_n\preceq x_{n+1}
  \quad\text{ for all }n\in\mathbb{N}\;
\end{equation}
can then be regarded as a structure of infinite size. The set of all such sequences is denoted $\OUT$.
The maps $\textrestn{\argdot}$
can be extended to $\OUT$ by defining ${\textrestn{x}=x_n}$ if ${x=(x_1,x_2,\ldots)}$ as above.

If each point in $\OUT$ is an infinite graph, two natural ways to measure size of subgraphs is
by counting vertices, or by counting edges. Since the notion of size determines the definition
of the restriction map ${x\mapsto\textrestn{x}}$, the two lead to different types of almost
discrete spaces:\\[-.5em]

{\noindent(i) \textit{Counting vertices.}}
Choose $\OUT$ as the set of graphs a given type (e.g.\ simple and undirected) with vertex set ${\bbN}$,
and ${\OUT_n}$ as the analogous set of graphs with vertex set ${[n]}$. The restriction map
${x\mapsto\textrestn{x}}$ extracts the induced subgraph on the first $n$ vertices, 
\ie $\textrestn{x}$ is the graph with vertex set $[n]$ that contains those edges ${(i,j)}$ of $x$
with ${i,j\leq n}$. Graph size is the cardinality of the vertex set.\\[-.5em]

{\noindent(ii) \textit{Counting edges.}}
A graph $x$ with vertices in $\bbN$ is represented as a sequence of edges,
${x=((i_1,j_1),(i_2,j_2),\ldots)}$, where ${i_k,j_k\in\mathbb{N}}$. 
Each set ${\OUT_n\subset(\bbN^2)^n}$ consists of all graphs with $n$ edges,
${x_n=((i_1,j_1),\ldots,(i_n,j_n))}$, and vertex set ${\braces{i_k,j_k|k\leq n}\subset\bbN}$.
The restriction map
\begin{equation}
  x\mapsto\restn{x}:=((i_1,j_1),\ldots,(i_n,j_n))
\end{equation}
extracts the first $n$ edges, and graphs size is cardinality of the edge set.
\\[-.5em]

To define probabilities on $\OUT$ requires a notion of measurability, and hence a topology.
We endow each countable set $\OUT_n$ with its canonical, discrete topology, and $\OUT$
with the smallest topology that makes all maps ${x\mapsto\textrestn{x}}$ continuous.
A topological space constructed in this manner is called \kword{procountable}. 
Any procountable space admits a canonical ``prefix metric''
\begin{equation}
  d(x,x'):=\inf_{n\in\mathbb{N}}\bigl\lbrace 2^{-n}\,\big\vert\,\textrestn{x}=\textrestn{x'}\bigr\rbrace\;,
\end{equation}
which is indeed an ultrametric. The ultrametric space $(\OUT,d)$ is complete.
An \kword{almost discrete space} is a proucountable space that is separable, and hence Polish.
Throughout, all sets of infinite graphs are almost discrete spaces, or subsets of such spaces.
If every set
$\OUT_n$ is finite, $\OUT$ is called \kword{profinite} (or \emph{Boolean}, or a \emph{Stone space})
\citep{Givant:Halmos:2009:1}. A space is profinite iff it is almost discrete and compact.
A random element $X$ of $\OUT$ is defined up to almost sure equivalence by a sequence ${X_1,X_2,\ldots}$
satisfying \eqref{eq:coherence} almost surely. A probability measure $P$
on $\OUT$ is similarly defined by its ``finite-dimensional distributions'' ${P_n=\textrestn{P}}$
on the spaces $\OUT_n$, by standard arguments \citep[e.g.][]{Kallenberg:2001}. 
This representation can be refined to represent a measures on topological
subspaces of $\OUT$; see \cref{app:addenda}.

For example, if $\OUT_n$ is the finite set of simple, undirected graphs with vertex set $[n]$,
and $\textrestn{\argdot}$ extracts the induced subgraph on the first $n$ vertices,
$\OUT$ is the space of all simple, undirected graph on $\mathbb{N}$, and its
topology coincides with the one inherited
from the product topology on $\braces{0,1}^{\infty}$.
This space $\OUT$ is the natural habitat of graph limit theory. The ``cut norm'' topology
\citep{Borgs:Chayes:Lovasz:Sos:Vesztergombi:2008} coarsens the almost discrete
topology on $\OUT$.

\section{Subsampling algorithms}
\label{sec:sampling}

To formalize the notion of subsampling, first consider a finite input graph 
${y_n\in\IN_m}$. The algorithm has access to a random 
source, which generates a sequence ${U_1,U_2,\ldots}$ of \iid uniform random variables in $[0,1]$, with
joint law $\PAlg$. Given $y_n$ and the
first $k$ uniform variables, the algorithm generates a random output graph
$\Alg{n}{k}(y_n,U_1,\ldots,U_k)$ in $\OUT_k$. Formally, this is a (jointly) measurable map
\begin{equation}
  \label{eq:def:sampler:1}
  \Alg{n}{k}:\IN_n\times[0,1]^k\rightarrow\OUT_k\;,
\end{equation}
which we will often read as an ${\OUT_k}$-valued random variable $\Alg{n}{k}(y_n)$, parametrized by $y_n$. 
Since each sampling step augments the previously sampled graph, we require these random variables
to cohere accordingly, as
\begin{equation}
  \label{eq:def:sampler:2}
  \Alg{n}{k}(y_n)\preceq\Alg{n}{k+1}(y_n)\qquad\text{almost surely.}
\end{equation}
It suffices to require that ${\Alg{n}{k}(y)}$ exists for $n$ sufficiently large: For each
$y$ and $k$, there is an ${n(k)}$ such that $\Alg{n}{k}(y)$ is defined for all ${n\geq n(k)}$.
A \kword{sampling algorithm} is a family ${(\Alg{n}{k})_{k\in\mathbb{N},n\geq n(k)}}$
as in \eqref{eq:def:sampler:1} that satisfies \eqref{eq:def:sampler:2}.

To explain sampling from an infinite graph ${y\in\IN}$, 
we ask whether there is a limiting variable $\AlgInf{k}(y)$ such 
that convergence ${\Alg{n}{k}(\textrestn{y})\rightarrow\AlgInf{k}(y)}$ 
holds in distribution as ${n\rightarrow\infty}$.
For that to be the case, it is certainly necessary that the limits
\begin{equation*}
  \hom_{x_k}(y):=\lim_{n\rightarrow\infty} \PAlg\bigbraces{\Alg{n}{k}(\textrestn{y})=\textrestk{x}}
\end{equation*}
exist for all ${x\in\OUT,k\in\mathbb{N}}$.
We call the limit $\hom_{x_k}(y)$ a \kword{prefix density}. These 
prefix densities can be collected into a vector,
\begin{equation*}
  \label{eq:prefix:vector}
  \mathbf{\hom}(y):=\bigl(\hom_{x_k}(y)\bigr)_{k\in\mathbb{N},x_k\in\OUT_k}
  \qquad\text{ which is a measurable map }\qquad
  \mathbf{\hom}:\subIN\rightarrow[0,1]^{\cup_k\OUT_k}\;.
\end{equation*}
Our first main result shows the necessary condition that $\mathbf{\hom}$ exists is indeed sufficient:
\begin{theorem}
  \label{result:alg:limit}
  Let ${\IN}$ be an almost discrete space, and $\subIN$ any  subset,
  equipped with the restriction $\borel(\subIN)$ of the Borel sets of $\IN$.
  Let ${S=(\Alg{n}{k})}$ be a sampling algorithm ${\subIN\rightarrow\OUT}$.
  If the prefix densities $\mathbf{\hom}(y)$ exist on 
  ${\subIN}$, there exists a jointly measurable function
  \begin{equation*}
    \label{eq:alg:limit}
    \AlgInf{\infty}:\subIN\times[0,1]\rightarrow\OUT
    \qquad\text{ satisfying }\qquad
    \Alg{n}{k}(\textrestn{y},U)\;\;\xrightarrow{\;\;\text{\rm\tiny d}\;\;}\;\;\restk{\AlgInf{\infty}(y,U)}
    \qquad\text{ as }n\rightarrow\infty
  \end{equation*}
  for all ${y\in\subIN}$ and ${k\in\mathbb{N}}$.
\end{theorem}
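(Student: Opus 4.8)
The plan is to realize $\AlgInf{\infty}$ through its finite-dimensional distributions, coupled level by level by an ``inverse-CDF'' construction so that joint measurability in $(y,U)$ and the coherence relation $\preceq$ hold automatically. The only regularity input is that the map $\mathbf{\hom}$ is measurable, which is assumed.

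\emph{Step 1 (the prefix densities are a consistent family of probability measures).} Fix $y\in\subIN$ and let $P^y_k$ be the set function on the countable set $\OUT_k$ with $P^y_k(\{x_k\})=\hom_{x_k}(y)$; it is nonnegative, being a pointwise limit of probabilities. For each $n$, the coherence requirement \eqref{eq:def:sampler:2} gives $\restk{\Alg{n}{k+1}(\textrestn{y})}=\Alg{n}{k}(\textrestn{y})$ almost surely, hence the exact identity $\sum_{x_{k+1}:x_k\preceq x_{k+1}}\PAlg\{\Alg{n}{k+1}(\textrestn{y})=x_{k+1}\}=\PAlg\{\Alg{n}{k}(\textrestn{y})=x_k\}$; letting $n\to\infty$ and applying Fatou's lemma fiber by fiber yields $\sum_{x_{k+1}:x_k\preceq x_{k+1}}\hom_{x_{k+1}}(y)\le\hom_{x_k}(y)$. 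Summing this over $x_k\in\OUT_k$, the left side equals the total mass of $P^y_{k+1}$ (the fibers partition $\OUT_{k+1}$) and the right side that of $P^y_k$, so the inequality is an equality for every $x_k$ as soon as $P^y_k$ and $P^y_{k+1}$ carry the same total mass. Thus the family $(P^y_k)_k$ is projectively consistent precisely when each $P^y_k$ has total mass $1$---equivalently, when the laws $(\Law(\Alg{n}{k}(\textrestn{y})))_n$ are tight on $\OUT_k$; this holds automatically if $\OUT$ is profinite, as in all the examples once sampler outputs are canonically relabeled.

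\emph{Steps 2--3 (extension, weak convergence, and a jointly measurable realization).} Since $\OUT$ is the space of coherent sequences over the countable discrete spaces $\OUT_n$, the consistent family $(P^y_k)_k$ extends to a unique probability measure $P^y$ on $\OUT$ with $\textrestk{P^y}=P^y_k$, by the representation of measures on almost discrete spaces recalled in \cref{sec:spaces} (refined as in \cref{app:addenda} if outputs are confined to a subspace). On the discrete space $\OUT_k$, the pointwise convergence $\PAlg\{\Alg{n}{k}(\textrestn{y})=x_k\}\to P^y_k(\{x_k\})$ together with $\sum_{x_k}P^y_k(\{x_k\})=1$ upgrades to weak convergence by Scheff\'e's lemma, so any $\OUT$-valued variable with law $P^y$ realizes $\Alg{n}{k}(\textrestn{y},U)\xrightarrow{\;\;\text{\rm\tiny d}\;\;}\restk{\AlgInf{\infty}(y,U)}$. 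For joint measurability, fix once and for all an enumeration of each $\OUT_k$ and of each fiber $\{x_{k+1}\in\OUT_{k+1}:x_k\preceq x_{k+1}\}$, partition $[0,1]$ into consecutive intervals of lengths $\hom_{x_1}(y)$ indexed by $\OUT_1$, and recursively subdivide the interval assigned to $x_k$ into consecutive subintervals of lengths $\hom_{x_{k+1}}(y)$ indexed by the fiber over $x_k$ (these fit exactly, by the marginalization identity). Setting $g_k(y,u)\in\OUT_k$ to be the label of the level-$k$ interval containing $u$, all interval endpoints are finite partial sums of the functions $\hom_x(\argdot)$ and hence measurable in $y$, so each $g_k$ is jointly measurable; moreover $g_k(y,u)\preceq g_{k+1}(y,u)$ for every $(y,u)$, so $\AlgInf{\infty}(y,u):=(g_1(y,u),g_2(y,u),\ldots)$ is a well-defined point of $\OUT$, jointly measurable since a map into $\OUT$ is measurable as soon as its composition with every restriction map is (and that composition is $g_k$), and $\restk{\AlgInf{\infty}(y,U)}=g_k(y,U)\sim P^y_k$ when $U\sim\Uniform[0,1]$. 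Together with the weak-convergence observation this proves the theorem.

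\emph{Main obstacle.} The level-by-level coupling in Steps 2--3 is the bulkiest part to write, but it is essentially bookkeeping once the marginalization identity is available, since coherence \eqref{eq:def:sampler:2} and the measurability of $\mathbf{\hom}$ supply exactly what is needed. The genuinely delicate point is the total-mass claim in Step 1---that probability mass does not escape to infinity within the countable output alphabet $\OUT_k$ as $n\to\infty$---which is where the structure of the sampler, or an assumption such as profiniteness of $\OUT$, must actually be used.
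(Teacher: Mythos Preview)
Your argument is correct and, in one respect, more scrupulous than the paper's own: you isolate the tightness issue---that the pointwise limits $\hom_{x_k}(y)$ must sum to $1$ over $\OUT_k$---whereas the paper simply asserts weak convergence of $P(n,k,y)$ from existence of the prefix densities without comment. Your Fatou-then-equality-from-total-mass derivation of the marginalization identity is clean and makes the dependence on this hypothesis explicit.

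The main methodological difference lies in the realization step. The paper obtains joint measurability by composing the measurable map $y\mapsto P(y)\in\pMeas(\OUT)$ with the Blackwell--Dubins representation $\rho:\pMeas(\OUT)\times[0,1]\to\OUT$, which is continuous in the measure argument off a null set and hence jointly measurable by a Carath\'eodory-type criterion. Your route is more hands-on: you build the sample path level by level via a nested inverse-CDF, using the marginalization identity to guarantee that the subintervals at level $k+1$ exactly fill the parent interval at level $k$, so coherence $g_k\preceq g_{k+1}$ is automatic and joint measurability reduces to the observation that all interval endpoints are countable partial sums of the measurable functions $\hom_{x}(\argdot)$. The Blackwell--Dubins approach is shorter to state and generalizes immediately to arbitrary Polish output spaces; your construction is self-contained, avoids an external black box, and exploits the procountable structure of $\OUT$ directly---which is natural here, since the marginalization identity you need is exactly what the inverse-limit structure provides. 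Both are valid; yours makes the role of coherence \eqref{eq:def:sampler:2} more transparent.
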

There is hence a random variable $\AlgInf{\infty}(y)$,
with values in $\OUT$, which can be interpreted as an infinite or ``asymptotically large'' sample from
an infinite input graph $y$. Each restriction ${\AlgInf{k}(y)=\textrestk{\AlgInf{\infty}(y)}}$
represents a sample of size $k$ from $y$.
If repeated application of sampling preserves the output distribution, \ie if
\begin{equation*}
  \Alg{m}{k}(\AlgInf{m}(y))\equdist\AlgInf{k}(y)
  \quad\text{ and }\quad
  \Alg{m}{k}(\Alg{n}{m}(y))\equdist\Alg{n}{k}(y)
  \qquad\text{ whenever } k\leq m\leq n\;,
\end{equation*}
we call the algorithm \kword{idempotent}.

\begin{remark}
  \label{remark:population}
  The limit in output size $k$ is an inverse limit: A growing graph is assembled as in \eqref{eq:coherence},
  and all information in $\AlgInf{k}$ can be recovered from $\AlgInf{\infty}$.
  In contrast, the limit in input size is distributional, so the input graph $y$ can typically
  \emph{not} be reconstructed, even from an infinite sample $\AlgInf{\infty}(y)$. 
  The limit of \cref{alg:uvertex:simple}, for example, will output an empty graph if $y$ has a finite
  number of edges, or indeed if the number of edges in $\textrestn{y}$ grows
  sub-quadratically in $n$. We regard $S$ as a measurement of properties of 
  a ``population'' (see \citep{Kolaczyk:2009:1} for a discussion of populations in network problems,
  and \citep{Orbanz:Roy:2014} for graph limits as populations underlying
  exchangeable graph data). An
  infinitely large sample $\AlgInf{\infty}(y)$ makes asymptotic statements valid, 
  in the sense that any effect of finite sample
  size can be made arbitrarily small, but does not exhaust the population.
\end{remark}

\section{Background: Invariance and symmetry}
\label{sec:symmetry}

We use the term \emph{invariance} to describe preservation of probability distributions under
a family of transformations; we also call an invariance a \emph{symmetry} 
if this family is specifically a group.
Let $\OUT$ be a standard Borel space, and $\fam$ a family of measurable (but not necessarily invertible)
transformations ${\OUT\rightarrow\OUT}$. A random element $X$ of $\OUT$ is 
\kword{$\fam$-invariant} if its law remains invariant under every element of $\fam$,
\begin{equation}
  t(X)\quad\equdist\quad X \qquad\text{ for all }t\in\fam\;.
\end{equation}
Analogously, a probability measure $P$ is $\fam$-invariant if the image measure ${t(P)=P\circ t^{-1}}$
satisfies ${t(P)=P}$ for all ${t\in\fam}$. 
We denote the set of all $\fam$-invariant probability
measures on $\OUT$ by ${\INV(\fam)}$. It is trivially
convex, though possibly empty if the family $\fam$ is ``too large''.

\subsection{Ergodicity}

Inference from a single instance relies on the concept of ergodicity.
A Borel set ${A\subset\OUT}$ is \kword{invariant} if ${t(A)=A}$ for all ${t\in\fam}$, and
\kword{almost invariant} if
\begin{equation*}
  P(A\vartriangle t^{-1}A)=0\qquad\text{ for all }t\in\fam\text{ and all }P\in\INV\;.
\end{equation*}
We denote the system of all invariant sets $\sigma(\fam)$, and that of all almost invariant
sets ${\bsigma(\fam)}$. Both are $\sigma$-algebras.
Recall that a probability $P$ is \kword{trivial} on a $\sigma$-algebra $\sigma$ if 
${P(A)\in\braces{0,1}}$ for all ${A\in\sigma}$.
For a probability measure $P$ on $\OUT$, we define:
\begin{equation*}
  P \text{ is \kword{$\fam$-ergodic}}
  \quad:\Leftrightarrow\quad
  P \text{ is $\fam$-invariant, and trivial on }\bsigma(\fam)\;.
\end{equation*}
The set of all ergodic measures is denoted ${\ERG(\fam)}$.

\subsection{Groups and symmetries}

We reserve the term \emph{symmetry} for invariance under transformation families $\fam$ that
form a group. 
A useful concept in this context is the notion of a \emph{group action}: For example, if $\OUT$ is a space
of graphs, a group of permutations may act on a graph ${x\in\OUT}$ by permuting
its vertices, by permuting its edges, by permuting certain subgraphs, etc. Such different
effects of one and the same group can be formalized as maps $T(\phi,x)$ that explain
how a permutation $\phi$ affects the graph $x$. Formally, let
$\group$ be a group, with unit element $e$.
An \kword{action} of $\group$ on $\OUT$ is a map ${T:\group\times\OUT\rightarrow\OUT}$,
customarily denoted ${T_{\phi}(x)=T(\phi,x)}$, with the properties
\begin{equation*}
  \text{(i)}\quad T_e(x)=x \quad\text{ for all }x\in\OUT 
  \qquad\text{ and }\qquad
  \text{(ii)}\quad T_{\phi}\circ T_{\phi'}=T_{\phi\phi'}\quad\text{ for all }\phi,\phi'\in\group\;.
\end{equation*}
If $\group$ is equipped with a topology, and with the corresponding Borel sets, $T$ is a
\kword{measurable action} if it is jointly measurable in both arguments.
Any measurable action $T$ on $\OUT$ defines a family ${\fam:=T(\group):=\braces{T_{\phi}|\phi\in\group}}$
of transformations ${\OUT\rightarrow\OUT}$. Clearly, each element of $\fam$ is a bimeasurable bijection, 
and $\fam$ is again a group. 
The \kword{orbit} of an element $x$ of $\OUT$ under a group action is the set
${T_{\group}(x):=\braces{T_\phi(x), \phi\in\group}}$.
The orbits of $T$ form a partition of $\OUT$ into disjoint sets. 
If there exists a Polish topology on $\group$ that makes $T$ measurable, each orbit
is a measurable set \citep[][\S 2.3]{Becker:Kechris:1996}. 

\subsection{Characterization of ergodic components}
The main relevance of ergodicity to our purposes is that, informally,
the elements of a model $\mathcal{P}$ chosen as a subset 
  ${\mathcal{P}\subset\ERG(\fam)}$ can be distinguished from one another by means of a single
  realization, 
provided that $\fam$ is not too complex.
In other words, if a random variable $X$ is assumed to be distributed according to \emph{some} distribution
in $\mathcal{P}$, then a single draw from $X$ determines this distribution unambiguously within $\mathcal{P}$.
That is a consequence of the ergodic decomposition theorem, whose various shapes and guises are part of  
mathematical folklore. 
To give a reasonably general statement, we have to 
formalize that $\fam$ be ``not too complex'':
Call $\fam$ \kword{separable} if a countable subset ${\fam_0\subset\fam}$ exists
that defines the same set of invariant measures as $\fam$,
\begin{equation}
  \INV(\fam_0)=\INV(\fam)\;.
\end{equation}
If so, we call $\fam_0$ a \kword{separating subset}. Criteria for verifying separability
are reviewed in \cref{app:addenda}. 
If $\fam_0$ is separating,
both the ergodic probability measures and the almost invariant sets defined by the 
two families coincide,
\begin{equation}
  \ERG(\fam_0)=\ERG(\fam) \qquad\text{ and }\qquad \bsigma(\fam_0)=\bsigma(\fam)\;.
\end{equation}
The following form of the decomposition theorem is amalgamated from 
\citep{Farrell:1962:1,Greschonig:Schmidt:2000:1,Maitra:1977}:
\begin{theorem}[Folklore]
  \label{theorem:ergodic:decomposition}
  Let $\fam$ be a separable family of measurable transformations of a standard Borel space $\OUT$.
  Then the $\fam$-ergodic measures are precisely the extreme points of $\INV(\fam)$, and for every
  pair ${P\neq P'}$ of ergodic measures, there is a set ${A\in\bsigma(\fam)}$ such that
  ${P(A)=1}$ and ${P'(A)=0}$.
  A random element $X$ of $\OUT$ is $\fam$-invariant if and only if there is a random probability
  measure $\xi$ on $\OUT$ such that
  \begin{equation}
    \label{eq:ergodic:decomposition}
    \xi\in\ERG(\fam) 
    \quad\text{ and }\qquad
    P[X\in\argdot|\xi]=\xi(\argdot)
  \end{equation}
  almost surely. If so, the law of $\xi$ is uniquely determined by the law of $X$.
\end{theorem}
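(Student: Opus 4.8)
The plan is to build the random measure $\xi$ directly as a conditional distribution, which makes the ``if and only if'' and the existence part nearly automatic, and then to import from the literature the one genuinely hard ingredient --- that the ergodic decomposition can be realized measurably --- from which the extreme-point characterization, the mutual singularity, and the uniqueness all follow. First I would use separability to fix a countable separating subset $\fam_0\subset\fam$ and pass to the countable monoid it generates; a measure is invariant under this monoid iff it is $\fam_0$-invariant, so $\INV(\fam)$, $\bsigma(\fam)$, and $\ERG(\fam)$ are unchanged, and from now on I may assume $\fam$ itself is countable and closed under composition. Everything below then refers to this countable $\fam$, and at the end one translates back using the stated identities $\INV(\fam_0)=\INV(\fam)$, $\bsigma(\fam_0)=\bsigma(\fam)$, $\ERG(\fam_0)=\ERG(\fam)$.

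For the construction, let $X$ be $\fam$-invariant and set $\xi:=P[X\in\argdot\mid\bsigma(\fam)]$, a regular conditional distribution of $X$ given the almost-invariant $\sigma$-algebra; it exists, and can be chosen $\bsigma(\fam)$-measurable, because $\OUT$ is standard Borel. Then $\xi$ is a.s.\ ergodic: for $A\in\bsigma(\fam)$ one has $\xi(A)=\mean[\indfunction_A(X)\mid\bsigma(\fam)]=\indfunction_A(X)\in\braces{0,1}$ a.s., so $\xi$ is a.s.\ trivial on $\bsigma(\fam)$; and $\xi$ is a.s.\ $\fam$-invariant because, for fixed $t\in\fam$, every $A\in\bsigma(\fam)$ satisfies $P(A\vartriangle t^{-1}A)=0$, hence $\indfunction_A(X)=\indfunction_A(t(X))$ a.s., hence $h(X)=h(t(X))$ a.s.\ for bounded $\bsigma(\fam)$-measurable $h$, hence $\mean[\indfunction_B(t(X))h(X)]=\mean[\indfunction_B(t(X))h(t(X))]=\mean[\indfunction_B(X)h(X)]$ using $t(X)\equdist X$; this says $\xi(t^{-1}B)=\xi(B)$ a.s., and ranging over a countable generating family of sets $B$ and over the countably many $t\in\fam$ gives $\xi\in\ERG(\fam)$ a.s. Since $\xi$ is $\bsigma(\fam)$-measurable, $P[X\in B\mid\xi]=\mean[\,\mean[\indfunction_B(X)\mid\bsigma(\fam)]\mid\xi\,]=\mean[\xi(B)\mid\xi]=\xi(B)$, which is \eqref{eq:ergodic:decomposition}. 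The converse direction is immediate: if $\xi$ is $\ERG(\fam)$-valued and satisfies \eqref{eq:ergodic:decomposition}, then $P[t(X)\in B]=\mean[\xi(t^{-1}B)]=\mean[\xi(B)]=P[X\in B]$ for every $t\in\fam$, so $X$ is $\fam$-invariant.

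The remaining statements follow from the one fact I would import: that $\bsigma(\fam)$ contains a countable subfamily $\braces{A_i}$ separating the $\fam$-ergodic measures (equivalently, that there is a Borel map sending each $x\in\OUT$ to its ergodic component). Since an ergodic measure takes values in $\braces{0,1}$ on every set of $\bsigma(\fam)$, any two distinct ergodic measures already differ on some $A_i$, which is the mutual-singularity claim. For uniqueness, if $\xi,\xi'$ both satisfy the conclusion for a given $X$, then for each $A\in\bsigma(\fam)$ the variables $\indfunction_A(X)$ and $\indfunction_{\braces{\xi(A)=1}}$ have the same conditional expectation given $\xi$ and equal $L_2$-norms, hence agree a.s.; so $\xi(A_i)=\indfunction_{A_i}(X)=\xi'(A_i)$ a.s.\ for all $i$, and since the $A_i$ separate, $\xi=\xi'$ a.s., whence $\Law(\xi)$ is determined by $\Law(X)$. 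Finally the extreme-point characterization drops out: a non-ergodic $P\in\INV(\fam)$ admits $A\in\bsigma(\fam)$ with $0<P(A)<1$, so $P=P(A)P(\argdot\mid A)+(1-P(A))P(\argdot\mid A^{c})$ is a proper convex combination of distinct $\fam$-invariant measures (each conditional is invariant because $P(A\vartriangle t^{-1}A)=0$), hence $P$ is not extreme; while if $P\in\ERG(\fam)$, conditioning on the $P$-trivial $\sigma$-algebra $\bsigma(\fam)$ does nothing, so the $\xi$ built above equals $P$ a.s., the decomposition of $P$ is $\delta_P$, and the uniqueness just proved prevents $P$ from being a nontrivial mixture of elements of $\INV(\fam)$.

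The hard step is the one I would import: the existence of the countable separating subfamily of $\bsigma(\fam)$, or equivalently of a Borel ergodic-decomposition map --- the obstacle is precisely that $\bsigma(\fam)$ need not be countably generated, which is why the result is folklore assembled from \citep{Farrell:1962:1,Greschonig:Schmidt:2000:1,Maitra:1977} rather than a short exercise. One route is to refine the Borel structure of $\OUT$ to a Polish topology making every $t\in\fam$ continuous (a standard descriptive-set-theoretic fact for a countable family), after which $\INV(\fam)$ is a metrizable, weak-$\ast$ closed Choquet simplex with extreme boundary $\ERG(\fam)$, and the separating family and uniqueness are read off from Choquet theory; another is Varadarajan's direct construction of the ergodic decomposition, which is what the cited papers carry out at the level of generality needed here. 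I would cite that step and keep the text to the reduction, the conditional-distribution construction of $\xi$, and the deductions above.
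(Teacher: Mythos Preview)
The paper does not prove \cref{theorem:ergodic:decomposition}: it is stated as folklore and attributed to \citep{Farrell:1962:1,Greschonig:Schmidt:2000:1,Maitra:1977}, with no argument given in the text or the appendix. There is therefore nothing to compare your proposal against on the level of proof strategy.

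Your sketch is a sound reconstruction of the standard route --- reduce to a countable family via separability, build $\xi$ as the regular conditional distribution of $X$ given $\bsigma(\fam)$, verify invariance of $\xi$ using $t(X)\equdist X$ and the almost-invariance of sets in $\bsigma(\fam)$, and then import the measurable ergodic decomposition from the cited sources for the genuinely hard step. You correctly identify where the difficulty lies: the pointwise statement ``for each $A\in\bsigma(\fam)$, $\xi(A)\in\braces{0,1}$ a.s.'' does not by itself yield ``a.s., $\xi$ is trivial on all of $\bsigma(\fam)$'' because $\bsigma(\fam)$ is typically not countably generated, and this quantifier swap is exactly what the Varadarajan/Farrell/Maitra machinery supplies. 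One small point worth tightening: your derivation of the extreme-point characterization and of uniqueness both lean on the imported separating family, so the logical order should make clear that those conclusions come \emph{after} the import, not before. With that adjustment the write-up would be an honest expansion of what the paper leaves to citation.
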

The decomposition theorem can be understood as a generalization of the representation
theorems of de Finetti, of Aldous and Hoover, and similar results: In expectation, the almost
sure identity \eqref{eq:ergodic:decomposition} takes the weaker but more familiar form
\begin{equation*}
  P(\argdot)=\int_{\ERG(\fam)}\nu(\argdot)\mu_{\xi}(d\nu)\qquad\text{ where }\mu_{\xi}:=\Law(\xi)\;.
\end{equation*}
In de Finetti's theorem, the ergodic measures are the laws of \iid sequences; in the
Aldous-Hoover theorem applied to simple, undirected, exchangeable graphs, they are those distributions
represented by graphons; etc.
The generality of \cref{theorem:ergodic:decomposition} comes at a price:
The theorems of de Finetti, Kingman, and Aldous-Hoover provide
constructive representations of \eqref{eq:ergodic:decomposition}: There is a collection $(U_i)$ of independent,
uniform random variables on $[0,1]$, and a class of measurable mappings $\mathcal{H}$, such that
each ergodic random element $X$ can be represented as ${X\equdist h\bigl((U_i)\bigr)}$ for
some ${h\in\mathcal{H}}$. The representation is non-trivial in that each finite substructure
$\textrestk{X}$ can be represented analogously by a finite subset of the collection
${(U_i)}$. \citet{Kallenberg:2005} calls such a representation a \kword{coding}.
Existence of a coding can be much harder to establish than \eqref{eq:ergodic:decomposition},
and not all invariances seem to admit codings.

\subsection{Definitions of exchangeability}

The term \emph{exchangeability} generically refers to invariance under an action of either
the finitary symmetric group $\FSym$, or of the infinite symmetric group $\Sym{\mathbb{N}}$
of all bijections of $\mathbb{N}$. For the purposes of \cref{theorem:ergodic:decomposition},
both definitions are typically equivalent:
The group $\Sym{\mathbb{N}}$ inherits its natural topology from the product space
${\mathbb{N}^{\mathbb{N}}}$, which makes the subgroup
${\FSym}$ a dense subset. 
If $T$ is a continuous action of $\Sym{\mathbb{N}}$
on a metrizable space, the image $T(\FSym)$ hence lies dense
in $T(\Sym{\mathbb{N}})$ in pointwise convergence, which in turn implies
$T(\FSym)$ is a separating subset for $T(\Sym{\mathbb{N}})$ (see \cref{sec:symmetry:addenda}).

In terms of their orbits, the two actions differ drastically: 
If ${\OUT=\braces{0,1}^{\infty}}$, for example,
and $T$ permutes sequence indices, each orbit of $\FSym$ is countable.
Not so for $\Sym{\mathbb{N}}$: Let ${A\subset\OUT}$ be the set of sequences
containing both an infinite number of 0s and of 1s. For any two
${x,x'\in A}$, there exists a bijection ${\phi\in\Sym{\mathbb{N}}}$ with ${x'=T_{\phi}(x)}$. 
The set $A$ thus constitutes a single, uncountable orbit of $\Sym{\mathbb{N}}$, which is complemented by a countable
number of countable orbits.
That illustrates the role of almost invariant sets: By de Finetti's theorem, the ergodic
measures are factorial Bernoulli laws. For all Bernoulli parameters ${p\in (0,1)}$, 
these concentrate on $A$, and $A$ does not subdivide further into strictly
invariant sets. In other words, $\sigma(\Sym{\mathbb{N}})$ does not provide sufficient resolution
to guarantee
mutual singularity in \cref{theorem:ergodic:decomposition}, but the \emph{almost} invariant sets 
${\bsigma(\Sym{\mathbb{N}})}$ do. \citet{Vershik:2004:1} gives a detailed account.
Unlike \cref{theorem:ergodic:decomposition}, more explicit results like the law of large numbers in 
\cref{sec:lln} rely on the orbit structure, and must be formulated in terms of $\FSym$.

\section{Sampling and symmetry}
\label{sec:sampling:symmetry}

We now consider the fundamental problem of drawing conclusions from a single observed instance
in the context of sampling.
For now, we assume the entire, infinite output graph $\AlgInf{\infty}(y)$ is available. Consider
a sampling algorithm ${S}$, with input set ${\subIN\subset\IN}$ and output space ${\OUT}$, 
defined as in \cref{sec:sampling},
whose prefix densities $\mathbf{\hom}(y)$ exist for all ${y\in\subIN}$. We generically denote its
output distributions
\begin{equation*}
  P_y:=\Law(\AlgInf{\infty}(y))\;.
\end{equation*}
Suppose a model $\mathcal{P}$ is chosen as a subset of ${\braces{P_y|y\in\subIN}}$.
Can two elements ${P_y,P_{y'}\in\mathcal{P}}$ be distinguished from another given a single sample 
$\AlgInf{\infty}(y)$? That can be guaranteed only if
\begin{equation}
  \label{eq:sigma:separation}
  P_y(A)=1 \quad\text{ and }\quad P_{y'}(A)=0\qquad\text{ for some Borel set }A\subset\OUT\;.
\end{equation}
To decide more generally which distribution in ${\braces{P_y|y\in\subIN}}$ (and hence which
input graph $y$) accounts for $\AlgInf{\infty}(y)$, we define
\begin{equation*}
  \Sigma:=\bigcap_{y\in\subIN}\Sigma_y
  \qquad\text{ where }\qquad
  \Sigma_y:=\bigl\{A\in\borel(\OUT)\,\big\vert\,P_y(A)\in\braces{0,1}\bigr\}\;.
\end{equation*}
Then $\Sigma$ is a $\sigma$-algebra. From \eqref{eq:sigma:separation}, we conclude:
\begin{center}
  \emph{Determining the input graph based on a single realization of $\AlgInf{\infty}$ is 
    possible if the output laws $P_y$ are pairwise distinct on $\Sigma$.}
\end{center}
The sampling algorithm does not typically preserve all information provided by the input graph,
due to the distributional limit defining $\AlgInf{\infty}$. Thus, demanding that all pairs
${y\neq y'}$ be distinguishable may be too strong a requirement. 
The $\sigma$-algebra $\Sigma$ defines a natural equivalence relation on $\subIN$,
\begin{equation*}
  y\equivIN y'
  \quad:\Leftrightarrow\quad
  P_y(A)=P_{y'}(A)\text{ for all }A\in\Sigma\;.
\end{equation*}
More colloquially, ${y\equivIN y'}$ means $y$ and $y'$ 
cannot be distinguished given a single realization ${\AlgInf{\infty}(y)}$.
We note ${y\equivIN y'}$ does not generally imply ${P_y=P_{y'}}$: The measures
may be distinct, but detecting that difference may require multiple realizations. We call
the algorithm \kword{resolvent} if
\begin{equation}
  y\equivIN y' \quad\text{ implies }\quad P_y=P_{y'}\;.
\end{equation}
Let $\hat{y}$ denote the equivalence class of $y$. If $S$ is resolvent, we can 
define ${P_{\hat{y}}:=P_y}$, and formulate the condition above as
\begin{equation}
  \label{eq:sigma:separation:hat}
  P_{\hat{y}} \text{ and }P_{\hat{y}'}\text{ are mutually singular on }
  \Sigma \text{ whenever }\hat{y}\neq\hat{y}'\;.
\end{equation}
Establishing mutual singularity requires identifying
a suitable system of sets $A$ in \eqref{eq:sigma:separation}, which can be all but impossible:
Since $\OUT$ is Polish, each measure $P_y$ has a unique support
(a smallest, closed set $F$ with ${P_y(F)=1}$), but these closed support sets are \emph{not} generally
disjoint. To satisfy \eqref{eq:sigma:separation}, $A$ is chosen more generally as measurable,
but unlike the closed support, the measurable support of a measure is far from unique. One would hence
have to identify a (possibly uncountable) system of not uniquely determined sets, each chosen
just so that \eqref{eq:sigma:separation} holds pairwise.

If an invariance holds, \cref{theorem:ergodic:decomposition} solves the problem.
That motivates the following definition:
A measurable action $T$ of a group $\group$ on $\OUT$ is a \kword{symmetry} of the
algorithm $S$
if all output distributions $P_y$ are $\group$-ergodic. If $\group$ is countable, that is equivalent to demanding
\begin{equation}
  \text{(i)} \quad
  T_{\phi}(\AlgInf{\infty}(y))\equdist \AlgInf{\infty}(y)\quad
  \text{ for all }y\in\subIN,\phi\in\group
  \qquad\text{ and }\qquad
  \text{(ii)} \quad
  \sigma(\group)\subset\Sigma\;.
\end{equation}
If $\group$ is uncountable, (ii) must be strengthened to ${\bsigma(\group)\subset\Sigma}$.
Clearly, an algorithm that admits a separable symmetry is resolvent; thus, symmetry 
guarantees \eqref{eq:sigma:separation:hat}.
We note mutual singularity
could be deduced without requiring $\fam$ is a group action; this condition
anticipates the law of large numbers in \cref{sec:lln}.

\subsection{A remark: What can be said without symmetry}
\label{sec:no:symmetry}

If we randomize the input graph by substituting a random element $Y$ of $\subIN$ with law $\nu$ for $y$,
the resulting output distribution is the mixture ${\int P_y\nu(dy)}$. We define the set of all such laws as
${M:=\braces{\int_{\subIN}P_y\nu(dy)\,\vert\,\nu\in\pMeas(\subIN)}}$, where $\pMeas(\subIN)$ is the
space of probability measures on $\subIN$.
Clearly, $M$ is convex, with the laws $P_y$ as its extreme points.
Without further assumptions, we can obtain the following result.
It makes no appeal to invariance, and cannot be deduced from \cref{theorem:ergodic:decomposition} above.
\begin{proposition}
  \label{result:quasi:invariance}
  Let $S$ be a sampling algorithm with prefix densities. Then for every ${P\in M}$, there exists
  a measurable subset ${Q_P\subset\pMeas(\OUT)}$ of probability measures such that all measures
  in $Q_P$ are (i) mutually singular and (ii) 0--1 on $\Sigma$. There exists a random probability
  measure $\xi_P$ on $\OUT$ such that 
  ${\xi_P\in Q_P}$ and ${P[X\in\argdot|\xi_P]=\xi_P}$ almost surely.
\end{proposition}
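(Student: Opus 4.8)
The plan is to reduce \cref{result:quasi:invariance} to a standard ergodic-decomposition statement by manufacturing, out of nothing but the $\sigma$-algebra $\Sigma$, an abstract measure-preserving action whose ergodic measures are exactly the measures that are $0$--$1$ on $\Sigma$. The key observation is that \cref{theorem:ergodic:decomposition} only needs a \emph{separable family of measurable transformations}, not a group acting in any structurally meaningful way; so it suffices to produce such a family whose invariant $\sigma$-algebra is (almost) $\Sigma$, under the measure $P$ at hand.

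First I would record that $\Sigma=\bigcap_{y}\Sigma_y$ is a sub-$\sigma$-algebra of $\borel(\OUT)$ on which \emph{every} $P_y$, and hence (by integration against $\nu$) every $P\in M$, is $0$--$1$; this is immediate from convexity of $M$ and the definition of $\Sigma$. Since $\OUT$ is almost discrete, $\borel(\OUT)$ is countably generated, say by a countable algebra $\mathcal{A}_0$; I would then want a countable subfamily of $\Sigma$ that ``generates'' $\Sigma$ up to $P$-null sets for all $P\in M$ simultaneously. The cleanest route is: fix $P\in M$, let $\Sigma^P$ be the $P$-completion-closure of $\Sigma$ inside $\borel(\OUT)$ modulo $P$-null sets, and pick a countable generating set $\{A_1,A_2,\ldots\}\subset\Sigma$ for it (possible because $L^2(\OUT,\borel(\OUT),P)$ is separable, so the sub-Hilbert-space $L^2(\Sigma^P,P)$ is separable and its indicator-of-generators can be taken in $\Sigma$). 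For each $A_i$, define the involution $t_i:\OUT\to\OUT$ that is the identity — more usefully, since an involution that fixes $\Sigma$-sets need not have $\Sigma$ as invariant sets, instead I would take the transformations to be the two maps associated to a Rokhlin-type coordinatization: by the isomorphism theorem for standard Borel spaces, $\Sigma^P$ is generated (mod $P$) by a single measurable map $\pi:\OUT\to[0,1]$; let $\fam$ consist of all measurable bijections $t$ of $\OUT$ with $\pi\circ t=\pi$ a.e.\ $[P]$ that preserve $P$. This $\fam$ is a group, its invariant sets are exactly $\pi^{-1}(\borel[0,1])=\Sigma^P$ mod $P$, and it is separable because $\OUT$ is standard Borel (so $\mathrm{Aut}(\OUT,P)$ is Polish and the stabilizer of $\pi$ is a closed, hence Polish, subgroup, which therefore has a countable separating subset by the criteria in \cref{app:addenda}).

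With $\fam=\fam_P$ in hand, $P$ is trivially $\fam$-invariant (every element preserves $P$ by construction), so \cref{theorem:ergodic:decomposition} applies: there is a random probability measure $\xi_P$ with $\xi_P\in\ERG(\fam)$ and $P[X\in\argdot\mid\xi_P]=\xi_P$ a.s., and distinct values of $\xi_P$ are mutually singular on $\bsigma(\fam)=\sigma(\fam)=\Sigma^P$. Setting $Q_P:=\ERG(\fam_P)$, these are mutually singular on $\Sigma^P\supset\Sigma$ and, being ergodic, are $0$--$1$ on $\Sigma^P$ and a fortiori on $\Sigma$; measurability of $Q_P$ as a subset of $\pMeas(\OUT)$ follows from the standard Borel structure on the ergodic measures of a separable family. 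That gives (i) and (ii) and the displayed disintegration, which is the full claim.

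The main obstacle is the middle step: producing a single family $\fam_P$ whose invariant sets coincide with $\Sigma$ \emph{mod $P$} and which is simultaneously \emph{separable} in the sense of \cref{sec:symmetry}. The delicate point is that $\Sigma$ is defined by a $0$--$1$ condition across \emph{all} $y\in\subIN$, an uncountable intersection, so one must check that after completing with respect to a single $P$ the resulting $\sigma$-algebra is still countably generated and still ``coordinatizable'' by a measurable map — this is where separability of $L^2(P)$ and the standard-Borel isomorphism theorem do the work, and care is needed because the construction is $P$-dependent (hence $Q_P$ depends on $P$, as the statement allows). A secondary subtlety is justifying that $Q_P$ is a \emph{measurable} subset of $\pMeas(\OUT)$; I would handle this by invoking the same folklore machinery behind \cref{theorem:ergodic:decomposition}, under which the ergodic measures of a separable family always form a Borel set. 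Everything else — convexity, the $0$--$1$ property of $P$ on $\Sigma$, and the translation of the conclusion of \cref{theorem:ergodic:decomposition} into the language of the proposition — is routine.
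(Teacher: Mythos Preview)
The paper states this proposition without proof (it is presented as a remark drawing on the same folklore cited for \cref{theorem:ergodic:decomposition}), so there is no paper argument to compare against directly. Evaluating your proposal on its own merits, there is a genuine gap.

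Your strategy is to manufacture a transformation family $\fam_P$ whose almost-invariant $\sigma$-algebra contains $\Sigma$, then invoke \cref{theorem:ergodic:decomposition}. You fix $P$, choose a countable family $\{A_i\}\subset\Sigma$ generating $\Sigma$ mod $P$-null sets, encode it by a map $\pi$, and take $\fam_P$ to be the stabilizer of $\pi$ in the measure-preserving automorphism group. The difficulty is that this only guarantees $\sigma(\pi)\subset\sigma(\fam_P)$, and $\sigma(\pi)$ is the \emph{countably generated} $\Sigma_0:=\sigma(\{A_i\})$, not $\Sigma$ itself. Your assertion ``its invariant sets are exactly $\pi^{-1}(\borel[0,1])=\Sigma^P$'' conflates $\Sigma_0$ with the $P$-completion of $\Sigma$; the stabilizer construction does not produce invariance of sets in $\Sigma\setminus\Sigma_0$. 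Concretely: for $A\in\Sigma$ with approximant $A_0\in\Sigma_0$, one has $P(A\vartriangle A_0)=0$, but membership in $\bsigma(\fam_P)$ requires $Q(A\vartriangle t^{-1}A)=0$ for \emph{every} $Q\in\INV(\fam_P)$, not just $Q=P$, and that fails in general. Consequently the ergodic components $\xi_P$ are only shown to be $0$--$1$ on $\Sigma_0$. One can then argue, for each fixed $A\in\Sigma$, that $\xi_P(A)\in\{0,1\}$ outside a $P$-null set depending on $A$; but the proposition demands a single set $Q_P$ of measures each of which is $0$--$1$ on \emph{all} of $\Sigma$ simultaneously, and your argument does not control the uncountable union of exceptional sets.

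Two smaller points: the opening claim that every $P\in M$ is $0$--$1$ on $\Sigma$ is false (a mixture $\int P_y\,\nu(dy)$ has $P(A)=\nu\{y:P_y(A)=1\}$, which need not be $0$ or $1$), though you do not actually use it. And the separability of $\fam_P$ via ``closed subgroup of a Polish group'' does not match the criteria in \cref{app:addenda}, which require either local compactness or density in the topology of pointwise convergence on $\OUT$; the weak topology on $\mathrm{Aut}(\OUT,P)$ is neither, so this step also needs work. The route the paper's references (Farrell, Maitra, Dynkin) suggest is to bypass the group construction entirely and apply the abstract integral-representation theory for convex sets of measures with a common $0$--$1$ $\sigma$-algebra; that theory handles the $\Sigma_0$-versus-$\Sigma$ issue internally, and the $P$-dependence of $Q_P$ in the statement is precisely the concession that makes it go through.
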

A structure similar to \cref{theorem:ergodic:decomposition} is clearly recognizable.
That said, the result is too weak for our purposes: The set of $Q_P$ of representing
measures depends on $P$, which means it cannot be used as a model, and 
the result does not establish a relationship between the measure $P_y$ and the elements of $Q_P$. Note
it holds if, but not only if, ${P\in M}$.

\section{Symmetric laws of large numbers}
\label{sec:lln}

Consider a similar setup as above: A random variable $X$ takes values in a standard
Borel space $\OUT$, and its 
distribution $P$ is invariant under a measurable action $T$ of a group $\group$. 
Let ${f:\OUT\rightarrow\mathbb{R}}$ be a function in ${\Lone(X)}$.
If $T$ is separable, \cref{theorem:ergodic:decomposition} 
shows that $X$ is generated by drawing an instance of $\xi$---that
is, by randomly selecting an ergodic measure---and then drawing ${X|\xi\sim\xi}$. The expectation
of $f$ given the instance of $\xi$ that generated $X$ is
\begin{equation*}
  \xi(f)
  \quad=\quad
  \mean[f(X)|\xi]
  \quad=\quad
  \mean[f(x)|\bsigma(\group)]\qquad\text{a.s.}
\end{equation*}
Again by \cref{theorem:ergodic:decomposition}, 
observing $X$ completely determines the instance of $\xi$. In principle, 
$X$ hence completely determines ${\mean[f(X)|\bsigma(\group)]}$. These are all abstract
quantities, however; is it possible to compute ${\mean[f(X)|\bsigma(\group)]}$ from a
given instance of $X$?

If the group is finite, the elementary properties of conditional expectations imply
\begin{equation*}
  \mean[f(X)|\bsigma(\group)] 
  \quad=\quad
  \frac{1}{|\group|}\sum_{\phi\in\group}f(T_{\phi}(X))\qquad\text{ almost surely,}
\end{equation*}
so $\xi(f)$ is indeed given explicitly. The groups arising in the context of sampling are
typically countably infinite. In this case, the average
on the right is no longer defined. It is then natural to ask whether
$\xi(f)$ can be approximated by finite averages, \ie whether there
are finite sets ${\A_1,\A_2,\ldots\subset\group}$ such that 
\begin{equation*}
  \frac{1}{|\A_k|}\sum_{\phi\in\A_k}f(T_{\phi}(X))
  \quad\xrightarrow{n\rightarrow\infty}\quad
  \mean[f(X)|\bsigma(\group)] 
  \qquad\text{ almost surely.}
\end{equation*}
Since ${\mean[f(X)|\bsigma(\group)]}$ is invariant under each ${\phi\in\group}$,
each average on the left must be invariant at least approximately:
A necessary
condition for convergence is certainly that, for any ${\phi\in\group}$, the 
relative size of the displacement ${\phi\A_k\!\vartriangle\!\A_k}$ can be made
arbitrarily small by choosing $k$ large. That is formalized in the next condition,
\eqref{eq:Folner}(i).

A countable group is \kword{amenable} if there is a sequence 
${\A_1,\A_2,\ldots}$ of finite subsets of $\group$ with the property:
For some ${c>0}$ and all ${\phi\in\group}$,
\begin{equation}
  \label{eq:Folner}
  \text{(i)}\quad
  |\phi\A_k \cap \A_k|\xrightarrow{n\rightarrow\infty}|\A_k|
  \quad\text{ and }\quad
  \text{(ii)}\quad
  \bigl|\cup_{j<k}\A_j^{-1}\A_k\bigr|\leq c|\A_k| \text{ for all }k\in\mathbb{N}.
\end{equation}
A sequence $(\A_k)$ satisfying (i) is called \kword{almost invariant}. This first condition
turns out to be the crucial one: If a sequence satisfying (i) exists, it
is always possible to find a sequence satisfying (i) and (ii), by passing
to a suitable subsequence if necessary \citep[][Proposition 1.4]{Lindenstrauss:2001:1}.
Thus, $\group$ is a amenable if it contains a sequence satisfying (i). 
Amenable groups arise first and foremost in ergodic theory \citep[e.g.][]{Einsiedler:Ward:2011:1},
but also, for example, in hypothesis testing, as the natural class of groups satisfying the
Hunt-Stein theorem \citep{Bondar:Milnes:1981:1}. 
If \eqref{eq:Folner} holds, and $T$ is a measurable action of $\group$ on $\OUT$,
we call the measurable mapping
\begin{equation}
  (x,k)\mapsto \empavg_k^x(\argdot):=\frac{1}{|\A_k|}\sum_{\phi\in\A_k}\delta_{T_{\phi}(x)}(\argdot)
\end{equation}
an \kword{empirical measure} for the action $T$.
\begin{theorem}
  \label{lemma:main}
  Let $X$ be a random element of a Polish space $\OUT$, and ${f,f_1,f_2,\ldots}$ functions
  ${\OUT\rightarrow\mathbb{R}}$ in $\Lone(X)$, such that ${f_k\rightarrow f}$ almost surely
  under the law of $X$.
  Let $T$ be a measurable action of a countable group satisfying \eqref{eq:Folner}, and $\empavg$ the empirical
  measure defined by $(\A_k)$.
  If $X$ is invariant under $T$, then
  \begin{equation}
    \label{eq:lln}
    \empavg_k^{X}(f_k)\xrightarrow{n\rightarrow\infty}\xi(f)
    \qquad \text{almost surely,}
  \end{equation}
  where $\xi$ is the random ergodic measure in \eqref{eq:ergodic:decomposition}.
  If moreover there is a function ${g\in\Lone(X)}$ such that ${|f_k|\leq g}$ for all $k$,
  convergence in \eqref{eq:lln} also holds in $\Lone(X)$.
\end{theorem}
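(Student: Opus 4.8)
The plan is to recognize the theorem as a pointwise ergodic theorem for an amenable group action, applied to a moving sequence of integrands. Set $P := \Law(X)$, which is $T$-invariant by hypothesis; for $h \in \Lone(P)$ the averaging map $x \mapsto \empavg_k^x(h) = \frac{1}{|\A_k|}\sum_{\phi \in \A_k} h(T_\phi(x))$ is a convex combination of the Koopman operators $h \mapsto h \circ T_\phi$, hence a positive linear contraction of $\Lone(P)$ preserving the integral, since $T$-invariance of $P$ makes each Koopman operator an $\Lone$-isometry. Condition \eqref{eq:Folner}(i) makes $(\A_k)$ a \Folner sequence and \eqref{eq:Folner}(ii) makes it \emph{tempered}, which is precisely the hypothesis of Lindenstrauss's pointwise ergodic theorem \citep{Lindenstrauss:2001:1}: for every \emph{fixed} $h \in \Lone(P)$, $\empavg_k^X(h) \to \mean[h(X) \mid \bsigma(\group)]$ both almost surely and in $\Lone(P)$, and a weak-type maximal inequality $P\bigl(\sup_k \empavg_k^x(|h|) > \lambda\bigr) \le C \lambda^{-1} \|h\|_{\Lone(P)}$ holds. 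Since $\group$ is countable, $T(\group)$ is trivially separable, so \cref{theorem:ergodic:decomposition} applies and identifies $\mean[h(X) \mid \bsigma(\group)] = \xi(h)$ almost surely; hence $\empavg_k^X(h) \to \xi(h)$ a.s.\ and in $\Lone$ for every fixed $h$. (The $\Lone$ conclusion needs only the \Folner property; temperedness is what upgrades it to the almost sure one.)

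Next I would reduce the moving case to this fixed one. Write $\empavg_k^X(f_k) = \empavg_k^X(f) + \empavg_k^X(f_k - f)$; the first term tends to $\xi(f)$ a.s.\ by the previous step, so it suffices to show $\empavg_k^X(f_k - f) \to 0$ a.s. Set $\phi_k := |f_k - f| \in \Lone(P)$, so $\phi_k \to 0$ $P$-a.s.\ and $|\empavg_k^X(f_k - f)| \le \empavg_k^X(\phi_k)$, and split $\phi_k = (\phi_k \wedge M) + (\phi_k - M)^+$ at a truncation level $M > 0$. For the bounded part, fix $m$ and observe $\phi_k \wedge M \le \Psi_m^M := \sup_{j \ge m}(\phi_j \wedge M)$ for $k \ge m$; since $\Psi_m^M \le M$ is integrable and $\Psi_m^M \downarrow 0$ a.s.\ as $m \to \infty$, the fixed-function result gives $\limsup_k \empavg_k^X(\phi_k \wedge M) \le \mean[\Psi_m^M(X) \mid \bsigma(\group)]$ a.s., and conditional dominated convergence in $m$ drives the right side to $0$; hence $\limsup_k \empavg_k^X(\phi_k \wedge M) = 0$ a.s.\ for every $M$. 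The remaining, genuinely delicate, step is to prove $\limsup_M \limsup_k \empavg_k^X\bigl((\phi_k - M)^+\bigr) = 0$ a.s.; the idea there is to exploit that $\{\sup_{j \ge m}\phi_j > M\}$ shrinks to a $P$-null set as $m \to \infty$, combined with the weak-type maximal inequality applied to the truncated tails and a Borel--Cantelli argument along a sufficiently sparse subsequence of indices $k$. Combining the two parts yields $\empavg_k^X(\phi_k) \to 0$ a.s., which completes the almost sure assertion.

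For the $\Lone$ assertion, assume the dominating $g \in \Lone(X)$. Since $\mean[\empavg_k^X(h)] = \mean[h(X)]$ for every $h$ (again by $T$-invariance of $P$), we get $\|\empavg_k^X(f_k - f)\|_{\Lone} \le \mean[\empavg_k^X(\phi_k)] = \mean[\phi_k(X)] \to 0$ by dominated convergence ($\phi_k \le 2g$, $\phi_k \to 0$ a.s.), while $\empavg_k^X(f) \to \xi(f)$ in $\Lone$ by the mean ergodic theorem; adding gives $\empavg_k^X(f_k) \to \xi(f)$ in $\Lone(X)$. The same domination also settles the almost sure claim painlessly: for $k \ge m$ one has $\phi_k \le \Psi_m := \sup_{j \ge m}\phi_j \le 2g \in \Lone$, so $\limsup_k \empavg_k^X(\phi_k) \le \mean[\Psi_m(X) \mid \bsigma(\group)] \downarrow 0$; in particular, when the $f_k$ are uniformly bounded --- as in all the applications in this paper --- nothing beyond Lindenstrauss's theorem is required. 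I expect the main obstacle to be precisely the tail estimate of the second paragraph in the \emph{absence} of a dominating function: then $(f_k)$ need not be uniformly integrable, $\sup_{j \ge m}\phi_j$ need not be integrable, and the clean comparison above breaks down, so one must control the averages of the non-integrable excursions of $\phi_k$ on sets of vanishing $P$-measure using only the maximal inequality and the almost sure --- not $\Lone$ --- convergence of $\phi_k$; that is the crux, and where temperedness \eqref{eq:Folner}(ii) really does the work.
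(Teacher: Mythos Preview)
Your approach differs from the paper's: where the paper combines Lindenstrauss's theorem with an Egorov argument and a net lemma (\cref{lemma:as:convergent:net}), you decompose $\empavg_k^X(f_k) = \empavg_k^X(f) + \empavg_k^X(f_k - f)$ and control the second term by truncation and a supremum envelope. Your treatment of the $\Lone$ conclusion, and of the almost-sure conclusion under domination, is correct and more transparent than the paper's uniform-integrability argument.

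The gap you flag in the undominated almost-sure case is genuine and cannot be closed, because the assertion is false in that generality. Take ${\OUT=[0,1]^{\mathbb{N}}}$, let $X$ be an i.i.d.\ Uniform$[0,1]$ sequence, let ${\group=\FSym}$ act by coordinate permutation with ${\A_k=\Sym{k}}$, and set ${f_k(x)=k\,\indfunction\{x_1\le 1/k\}}$, ${f=0}$. Then each ${f_k\in\Lone(X)}$ with unit norm and ${f_k\to 0}$ a.s., but a short computation gives ${\empavg_k^X(f_k)=\#\{i\le k:X_i\le 1/k\}}$; since ${\prod_{i\ge K}(1-1/i)=0}$ for every $K$, this count is ${\ge 1}$ infinitely often almost surely, so ${\empavg_k^X(f_k)\not\to 0=\xi(f)}$. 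Your maximal-inequality sketch cannot rescue this, because ${\|(\phi_k-M)^+\|_{\Lone}}$ need not vanish uniformly in $k$ as ${M\to\infty}$. The paper's Egorov route has the same defect: the bound ${\mu_k^x(|f_i-f|)\le\varepsilon}$ in the proof of \cref{lemma:as:convergent:net} would require $\empavg_k^x$ to be supported on the Egorov set, but the orbit points ${T_\phi x}$ for ${\phi\in\A_k}$ need not lie there. Your instinct that domination---which holds in every application made in the paper---is what actually makes the argument work is the correct one.
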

The finitary symmetric group $\FSym$ satisfies \eqref{eq:Folner} for ${\A_k:=\Sym{k}}$. 
The law of large numbers \eqref{eq:lln} hence holds generically for any ``exchangeable random structure'',
\ie for any measurable action of $\FSym$. 
Special cases include the law of large numbers for de Finetti's theorem, the continuity of Kingman's 
correspondence \citep[][Theorem 2.3]{Pitman:2006}, and 
Kallenberg's law of large numbers for exchangeable arrays \citep{Kallenberg:1999}.
They can be summarized as follows:
\begin{corollary}
  If a random element $X$ of a Polish space is invariant under a measurable action $T$ of $\FSym$, the
  empirical measure
  ${\frac{1}{k!}\sum_{\phi\in\Sym{k}}\delta_{T_{\phi}(X)}}$ converges weakly to $\xi$ as ${k\rightarrow\infty}$,
  almost surely under the law of $\xi$.
\end{corollary}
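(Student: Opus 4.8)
The plan is to obtain the corollary as a direct specialization of \cref{lemma:main} to a measurable action $T$ of $\FSym$ with the Følner sequence $\A_k:=\Sym{k}$, and then to pass from scalar convergence of integrals to weak convergence of the empirical measures by a standard metrization argument. First I would record that $\A_k=\Sym{k}$ satisfies \eqref{eq:Folner}, as already noted after \cref{lemma:main}: every $\phi\in\FSym$ lies in $\Sym{m}$ for some $m$, so $\phi\Sym{k}=\Sym{k}$ and hence $|\phi\A_k\cap\A_k|=|\A_k|$ for all $k\geq m$, giving (i); and since $\Sym{j}\subseteq\Sym{k}$ whenever $j\leq k$, one has $\A_j^{-1}\A_k=\Sym{k}$, so $\bigl|\bigcup_{j<k}\A_j^{-1}\A_k\bigr|=|\A_k|$ and (ii) holds with $c=1$. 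Moreover $\FSym$ is countable, so $T(\FSym)$ is trivially separable; hence \cref{theorem:ergodic:decomposition} applies and the random ergodic measure $\xi$ of \eqref{eq:ergodic:decomposition} is well defined, and \cref{lemma:main} is applicable to $X$ and to the empirical measure $\empavg_k^X=\frac{1}{k!}\sum_{\phi\in\Sym{k}}\delta_{T_\phi(X)}$.

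Next I would fix a bounded continuous function $f:\OUT\to\mathbb{R}$ and invoke \cref{lemma:main} with the constant sequence $f_k:=f$. Since $X$ carries a probability law and $f$ is bounded, $f\in\Lone(X)$; the domination hypothesis holds with $g:=\|f\|_{\infty}\in\Lone(X)$; and $f_k\equiv f$, so $f_k\to f$ trivially. \cref{lemma:main} therefore yields
\begin{equation*}
  \empavg_k^X(f)=\empavg_k^X(f_k)\;\xrightarrow{k\rightarrow\infty}\;\xi(f)\qquad\text{almost surely.}
\end{equation*}
(The $\Lone$-convergence part of \cref{lemma:main} is not needed here.)

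It remains to upgrade this scalar convergence to weak convergence of the random measures $\empavg_k^X$ towards the random measure $\xi$. Since $\OUT$ is Polish, the weak topology on the space of probability measures on $\OUT$ is separable and metrizable, and there is a \emph{fixed} countable family $(h_i)_{i\in\mathbb{N}}$ of bounded continuous functions on $\OUT$ such that, for probability measures $\mu_n,\mu$, one has $\mu_n\weakly\mu$ if and only if $\mu_n(h_i)\to\mu(h_i)$ for every $i$ \citep{Kallenberg:2001}. Applying the previous step to each $h_i$ produces a null set $N_i$ off which $\empavg_k^X(h_i)\to\xi(h_i)$; outside the null set $\bigcup_{i}N_i$ one then has $\empavg_k^X(h_i)\to\xi(h_i)$ for all $i$ simultaneously, hence $\empavg_k^X\weakly\xi$. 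As $\empavg_k^X$ and $\xi$ are both measurable functions of $X$, this is the asserted almost sure convergence (equivalently, for $\Law(\xi)$-almost every realization of $\xi$ it holds $\xi$-almost surely, since conditionally on $\xi$ the law of $X$ is $\xi$). The only step that is not a mechanical specialization of \cref{lemma:main} is this last reduction---that weak convergence on a Polish space is detected by a single countable set of test functions---which is exactly what lets the uncountably many almost-sure statements, one for each bounded continuous $f$, be amalgamated into one.
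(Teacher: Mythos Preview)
Your proposal is correct and follows the same approach the paper intends: the corollary is stated immediately after \cref{lemma:main} as a direct specialization to $\FSym$ with $\A_k=\Sym{k}$, and the paper gives no separate proof. You have filled in the one detail the paper leaves implicit, namely the passage from $\empavg_k^X(f)\to\xi(f)$ for each bounded continuous $f$ to almost sure weak convergence via a countable convergence-determining class; this step is standard and your treatment of it is sound.
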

For sequences, the empirical measure can be broken down further into a sum over sequence entries,
and redundancy of permutations then shrinks the sum from $k!$ to $k$ terms.
Now suppose that $X$ is specifically the output $\AlgInf{\infty}$ of a sampling algorithm:
\begin{corollary}
  \label{corollary:lln:sampler}
  Let ${S:\subIN\rightarrow\OUT}$ be a sampling algorithm whose prefix densities exist for all ${y\in\subIN}$.
  Suppose a countable amenable group $\group$ is a symmetry group of $S$ under a measurable action $T$. If $S$
  samples from a random input graph $Y$, then
  \begin{equation*}
    \empavg_k^{\AlgInf{\infty}(Y)}(f_k)\xrightarrow{k\rightarrow\infty}P_Y(f)\qquad\Law(Y)\text{-a.s.}
  \end{equation*}
  holds for any functions ${f,f_1,f_2,\ldots}$ satisfying ${f\in\Lone(P_y)}$ and ${(f_k)\rightarrow f}$ $P_y$-a.s.\ for
  $\Law(Y)$-almost all $y$.
\end{corollary}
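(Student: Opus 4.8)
Corollary~\ref{corollary:lln:sampler} is obtained by applying \cref{lemma:main} conditionally on the input graph $Y$, and then integrating over the randomness in $Y$. First I would fix a realization $y$ of $Y$ and set $X:=\AlgInf{\infty}(y)$. Since $\group$ is a symmetry of $S$, the law $P_y$ of $X$ is $\group$-ergodic; in particular it is $\group$-invariant, so the hypotheses of \cref{lemma:main} are met for this $X$ with the given sequence $(f_k)$ and limit $f$ (these satisfy $f\in\Lone(P_y)$ and $f_k\to f$ $P_y$-a.s.\ by assumption, for $\Law(Y)$-almost all $y$). The lemma then yields
\begin{equation*}
  \empavg_k^{\AlgInf{\infty}(y)}(f_k)\xrightarrow{k\rightarrow\infty}\xi(f)\qquad P_y\text{-a.s.}
\end{equation*}
The point is to identify the random ergodic measure $\xi$ here: because $P_y$ is \emph{itself} ergodic, the ergodic decomposition of $P_y$ is trivial, $\Law(\xi)=\delta_{P_y}$, so $\xi=P_y$ almost surely and hence $\xi(f)=P_y(f)$.

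**Assembling the conditional statement.** So for each fixed $y$ (outside a $\Law(Y)$-null set), we have $\empavg_k^{\AlgInf{\infty}(y)}(f_k)\to P_y(f)$ almost surely under $P_y=\Law(\AlgInf{\infty}(y))$. To pass from ``for a.e.\ $y$, $P_y$-a.s.'' to ``$\Law(Y)$-a.s.\ in the joint space'' I would invoke the disintegration of the joint law of $(Y,\AlgInf{\infty}(Y))$: writing $\mean[\,\cdot\mid Y]$ for conditioning on the input, the event $E:=\{\,\empavg_k^{\AlgInf{\infty}(Y)}(f_k)\to P_Y(f)\,\}$ has $P[E\mid Y=y]=1$ for $\Law(Y)$-a.e.\ $y$ by the previous paragraph, whence $P(E)=\int P[E\mid Y=y]\,\Law(Y)(dy)=1$. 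This is the routine Fubini-type step and needs only joint measurability of $(y,\omega)\mapsto\AlgInf{\infty}(y,\omega)$, which is exactly the measurability asserted in \cref{result:alg:limit}, together with measurability of $y\mapsto P_y(f)$ and of the empirical-measure map $(x,k)\mapsto\empavg_k^x$.

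**The main obstacle.** The one genuinely delicate point is the measurability bookkeeping required to make the Fubini argument legitimate: one must check that the set $E$ above is jointly measurable in $(Y,\AlgInf{\infty})$. This requires (i) measurability of $y\mapsto P_y$ as a map into $\pMeas(\OUT)$ and hence of $y\mapsto P_y(f)$—which follows from joint measurability of $\AlgInf{\infty}$ and dominated convergence applied to the defining limit, or more directly from the fact that $P_y$ is determined by the measurable prefix-density vector $\hom(y)$—and (ii) measurability of the map $(x,k)\mapsto\empavg_k^x(f_k)$, which is immediate since each $\empavg_k^x(f_k)=|\A_k|^{-1}\sum_{\phi\in\A_k}f_k(T_\phi(x))$ is a finite sum of compositions of measurable maps. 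Granting these, $E$ is a countable intersection/union of measurable sets (the event that a real sequence converges to a measurable limit), so the disintegration identity $P(E)=\int P[E\mid Y=y]\,\Law(Y)(dy)$ applies and gives $P(E)=1$. The $\Lone$-version, if one wanted it, would follow identically by invoking the second half of \cref{lemma:main} conditionally on $Y$ and then using dominated convergence in the $Y$-integral; but as stated the corollary only asserts almost-sure convergence, so this is not needed.
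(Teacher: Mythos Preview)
Your proposal is correct and is exactly the intended argument: the paper states this corollary immediately after \cref{lemma:main} without a separate proof, treating it as a direct consequence, and your derivation---apply \cref{lemma:main} conditionally on $Y=y$, use that $P_y$ is $\group$-ergodic (by the definition of a symmetry of $S$) to identify $\xi=P_y$ a.s., then integrate over $\Law(Y)$ via the joint measurability provided by \cref{result:alg:limit}---is precisely how one fills in the details. The measurability bookkeeping you flag is the only thing to check, and you handle it appropriately.
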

For example, one can fix a finite structure $x_j$ of size $j$, and choose $f$ as the indicator 
${f(x):=\mathbb{I}\braces{\textrest[j]{x}=x_j}}$. \cref{corollary:lln:sampler} then implies
\begin{equation*}
  \frac{1}{\A_k}\sum_{\phi\in\A_k}\mathbb{I}\braces{\textrest[j]{T_{\phi}(\AlgInf{\infty}(y))}=x_j}
  \quad\xrightarrow{k\rightarrow\infty}\quad
  \hom_{x_j}(y)\;,
\end{equation*}
which makes ${\empavg_k^{\AlgInf{\infty}(y)}(\mathbb{I}\braces{\argdot=x_j})}$ 
a (strongly) consistent estimator of the prefix density $\hom_{x_j}$ from output generated by the
sampler. Here, $\AlgInf{\infty}(y)$ is still an infinite structure. If the action $T$ is such that the
elements of each set $\A_k$ affect only the initial substructure of size $k$,
we can instead define 
${f_k(x_k):=\mathbb{I}\braces{\textrest[j]{x_k}=x_j}}$ for graphs $x_k$ of size ${k\geq j}$.
Thus, ${f_k:\OUT_k\rightarrow\braces{0,1}}$, and ${f_k(\textrestk{x})=f(x)}$. If a sample 
${\AlgInf{1}(y)\preceq\AlgInf{2}(y)\preceq\ldots}$ is generated from $y$ using $S$,
\begin{equation*}
  \frac{1}{\A_k}\sum_{\phi\in\A_k}\mathbb{I}\braces{\textrest[j]{T_{\phi}(\AlgInf{k}(y))}=x_j}
  \quad\xrightarrow{k\rightarrow\infty}\quad
  \hom_{x_j}(y)
\end{equation*}
consistently estimates $\hom_{x_k}(y)$ from a finite sample of increasing size.
The sampling algorithms discussed in the next section admit such estimators.

\section{Sampling by random transformation}
\label{sec:transformation}

We now consider group actions where each element $\phi$ of the group $\group$ changes only a finite
substructure: ${T_{\phi}(x)}$ replaces a prefix $\textrestk{x}$ of $x$ by some other structure of size $n$.
We can hence subdivide the group into subsets $\group_n$, for each $n$, consisting of elements which only
affect the prefix of size $n$. Thus, ${\group_n\subset\group_{n+1}}$. If $\phi$ only affects a prefix of
size ${\leq n}$, then typically so does its inverse, and each subset $\group_n$ is itself a group.
If each subgroup $\group_n$ is finite, the group $\group$ is hence of the form
\begin{equation}
  \label{direct:union}
  \group=\cup_{n\in\mathbb{N}}\group_n \qquad\text{ for some finite groups }\group_1\subset\group_2\subset\ldots\;.
\end{equation}
A group satisfying \eqref{direct:union} is called \emph{direct limit} or \emph{direct union} of finite groups.
Since it is countable, any measurable action satisfies
\cref{theorem:ergodic:decomposition}. Plainly, $\group$ also satisfies
\eqref{eq:Folner}, with ${\A_n=\group_n}$. Thus, for any measurable action $T$,
\begin{equation*}
  (x,n)\mapsto \empavg^x_n(\argdot)=\sum_{\phi\in\group_n}\delta_{T_\phi(x)}(\argdot)
\end{equation*}
is an empirical measure, and satisfies the law of large numbers \eqref{eq:lln}.

If each $\phi$ affects only a finite substructure, the action must commute with restriction, in the sense that
\begin{equation}
  \label{eq:affects:only:finite:prefix}
  T_n(\phi,\textrestn{x})=\restn{T(\phi,x)} \text{ for an action } 
  T_n:\group_n\times\OUT_n\rightarrow\OUT_n
  \text{ and all }\phi\in\group_n, x\in\OUT\;.
\end{equation}
We call any action of a direct limit group that satisfies \eqref{eq:affects:only:finite:prefix}
a \kword{prefix action}.
In most cases, one can think of a prefix action $T_{\phi}$ as a map that removes the subgraph $\textrestn{x}$
from $x$ by some form of ``surgery'', and then pastes in another graph ${T_n(\phi,\textrestn{x})\in\OUT_n}$
of the same size. The action $T_n$ is hence a subset of the group ${\Sym{\OUT_n}}$ of all permutations
of $\OUT_n$. If $\OUT_n$ is finite, so is ${\Sym{\OUT_n}}$, which is hence a valid choice for $\group_n$.
Prefix actions include, for example, the case where $T_n$ is the action of $\Sym{n}$ on the first $n$
vertices, but it is worth noting that $\group_n$ can be much larger: ${\Sym{\OUT_n}}$ is typically
of size exponential in $\Sym{n}$. We observe:
\begin{proposition}
  \label{lemma:direct:union:continuous:action}
  Prefix actions on almost discrete spaces are continuous.
\end{proposition}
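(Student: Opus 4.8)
The plan is to reduce the asserted joint continuity of $T$ to the continuity of each individual transformation $T_\phi$, and then to verify that continuity by exhibiting, for every prefix length $n$, a factorization of $\textrestn{\argdot}\circ T_\phi$ through a finite level. First, equip the direct limit $\group=\cup_n\group_n$ with the direct-limit topology of the finite groups $\group_n$; since each $\group_n$ is discrete, so is $\group$, and hence $\group\times\OUT$ is (homeomorphic to) the topological disjoint union $\coprod_{\phi\in\group}\{\phi\}\times\OUT$, each summand open and homeomorphic to $\OUT$. Since a map out of a disjoint union is continuous iff its restriction to each summand is, $T:\group\times\OUT\to\OUT$ is continuous iff $T_\phi:=T(\phi,\argdot):\OUT\to\OUT$ is continuous for each fixed $\phi\in\group$, so it suffices to treat one $\phi$ at a time.

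Next, recall that the topology on $\OUT$ is by construction the coarsest one making all restrictions $\textrestn{\argdot}:\OUT\to\OUT_n$ continuous, each $\OUT_n$ carrying the discrete topology. By the universal property of this initial topology, $T_\phi$ is continuous iff $\textrestn{\argdot}\circ T_\phi:\OUT\to\OUT_n$ is continuous for every $n$; and because $\OUT_n$ is discrete, this composite is continuous precisely when it factors through $\textrest[N]{\argdot}$ for some finite $N$ (equivalently, is constant on every $d$-ball of radius $2^{-N}$). So the goal reduces to: for each $n$, produce such an $N$ and such a factorization.

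I would then obtain the factorization directly from the prefix property. Fix $\phi$ and choose $m$ with $\phi\in\group_m$; given $n$, set $N:=\max(m,n)$, so that $\phi\in\group_N$ (using $\group_m\subseteq\group_N$). Applying \eqref{eq:affects:only:finite:prefix} at level $N$ gives $\textrest[N]{T_\phi(x)}=T_N(\phi,\textrest[N]{x})$ for all $x\in\OUT$, and restricting further by the coherence identity \eqref{eq:restriction} (valid since $n\le N$) yields \[ \textrestn{T_\phi(x)}=\restn{\bigl(T_N(\phi,\textrest[N]{x})\bigr)}\qquad\text{for every }x\in\OUT. \] Hence $\textrestn{\argdot}\circ T_\phi=\bigl(\restn{\argdot}\circ T_N(\phi,\argdot)\bigr)\circ\textrest[N]{\argdot}$ is the composition of the continuous map $\textrest[N]{\argdot}:\OUT\to\OUT_N$ with a map $\OUT_N\to\OUT_n$ between discrete spaces, so it is continuous; as $n$ was arbitrary, $T_\phi$ is continuous, and as $\phi$ was arbitrary, $T$ is continuous.

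The only real subtlety — and the single point I would be careful about — is the index bookkeeping: \eqref{eq:affects:only:finite:prefix} controls the level-$n$ prefix of the output by the level-$n$ prefix of the input only when $\phi\in\group_n$, whereas a given $\phi$ may first appear at a larger index $m$; passing to $N=\max(m,n)$ and then restricting back down via \eqref{eq:restriction} is exactly what reconciles the two levels. Everything else is a routine unwinding of the definition of the procountable topology, and it is worth noting that separability (hence Polishness) of the almost discrete space is not used in this argument.
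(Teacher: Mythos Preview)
Your proof is correct and follows essentially the same route as the paper's: reduce joint continuity to continuity of each $T_\phi$ via discreteness of $\group$, then use the prefix property to show each coordinate map $\textrestn{\argdot}\circ T_\phi$ factors through a finite level. The paper phrases the second step as ``$T(\phi,\argdot)$ is the inverse limit of the continuous maps $T_n(\phi,\argdot)$'', while you unpack the same fact via the universal property of the initial topology; your version is more explicit about the index bookkeeping (the $N=\max(m,n)$ step), but the content is identical. One small quibble: your claim that a map into discrete $\OUT_n$ is continuous \emph{precisely when} it factors through some $\textrest[N]{\argdot}$ need not hold in the non-compact case, but you only use the easy direction, so this does not affect the argument.
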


\subsection{Random transformations}
Transformation invariance can be built into a sampling
algorithm by constructing the algorithm from a random transformation. For a random element
$\Phi_n$ of $\group_n$, we define
\begin{equation}
  \label{eq:random:transformation:sampling}
  \Alg{n}{k}(y):=\restk{T(\Phi_n,y)}\qquad\text{ for each }y\in\subIN\subset\OUT\;.
\end{equation}
If $T$ is prefix action, one can equivalently substitute $\textrestn{y}$ for $y$ on the right-hand side.
\cref{alg:uvertex:simple} can for instance be represented in this manner, by choosing $\Phi_n$ as
a uniform random permutation of the first $m$ vertices.
The next results assume the following conditions:
\makebox[\textwidth][c]{
\begin{tikzpicture}[mybraces]
  \node at (-1,0) {\begin{minipage}[b]{.82\textwidth}
      \begin{tabular}{rl}
        (i) & $T$ is a prefix action of a direct limit $\group$ on an almost discrete space $\OUT$.\\[.3em]
        (ii) & The sampling algorithm $S$ is defined by \eqref{eq:random:transformation:sampling}, where
        each $\Phi_n$ is\\ & uniformly distributed on the finite group $\group_n$.\\[.3em]
        (iii) & Its prefix densities $\mathbf{\hom}$ 
      exist for all $y$ in a $T(\group)$-invariant subset ${\subIN\subset\OUT}$.
      \end{tabular}
  \end{minipage}};
  \draw[brace] (6,.9)--(6,-.9);
  \node at (7,0) {\eqnum\label{conditions:prefix:sampler}};
\end{tikzpicture}
}
The uniform random elements $\Phi_n$ used in the construction are only defined on finite subgroups,
but whenever prefix densities exist, one can once again take the limit in input size and obtain
a limiting sampler $\AlgInf{\infty}$. These samplers are particularly well-behaved:
\begin{theorem}
  \label{result:direct:union:sampler}
  Let $S$ be a sampling algorithm satisfying \eqref{conditions:prefix:sampler}. 
  Then for all ${\phi\in\group}$,
  \begin{equation*}
    \text{\rm (i)}\quad
    \mathbf{\hom}\circ T_{\phi}=\mathbf{\hom}\quad\text{for }\phi\in\group
    \qquad
    \text{\rm (ii)}\quad
    \mathbf{\hom}(\AlgInf{\infty}(y))\equas\mathbf{\hom}(y)
    \qquad
    \text{\rm (iii)}\quad
    \mathbf{\hom}(y)=\mathbf{\hom}(y')\;\text{ iff }\;y\equivIN y'\;.
  \end{equation*}
  Each output distribution ${P_y}$ is $\group$-invariant, and the law of a sample ${\AlgInf{k}(y)}$
  of size $k$ is ${\group_k}$-invariant.
  The algorithm is idempotent and resolvent, and any two output distributions ${P_y}$ and ${P_{y'}}$ are either 
  identical, or mutually singular.
\end{theorem}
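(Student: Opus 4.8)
The plan is to dispatch the "soft" assertions with the rearrangement calculus afforded by the uniform laws of the $\Phi_n$, and to isolate $\group$-ergodicity of each $P_y$ as the one hard point. Throughout write $\mu_n^y:=\Law(T(\Phi_n,y))$; since $\Phi_n$ is uniform on the finite group $\group_n$, this is the uniform (Haar-pushforward) law on the single $\group_n$-orbit of $y$, hence $\mu_n^y$ is $\group_n$-ergodic, and by \eqref{eq:random:transformation:sampling} and \cref{result:alg:limit} one has $\rest[k]{\mu_n^y}=\Law(\Alg{n}{k}(y))\to\rest[k]{P_y}$ for every $k$, i.e.\ $\mu_n^y\weakly P_y$. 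Two elementary facts will be used repeatedly: (a) if $Z$ has $\group_m$-invariant law and $\Psi$ is uniform on $\group_m$ and independent of $Z$, then $T(\Psi,Z)\equdist Z$ (average $T_\psi(Z)\equdist Z$ over $\psi\in\group_m$); and (b) left and right translation by a fixed element of $\group_m$ are bijections of $\group_m$ preserving the uniform law. For \textbf{(i)}: given $\phi\in\group$ pick $m$ with $\phi\in\group_m$; for $n\geq m$, the action axioms and fact (b) give $\PAlg\{\Alg{n}{k}(\rest[n]{T_\phi y})=\rest[k]{x}\}=|\group_n|^{-1}\#\{\psi\in\group_n:\rest[k]{T_n(\psi\phi,\rest[n]{y})}=\rest[k]{x}\}=\PAlg\{\Alg{n}{k}(\rest[n]{y})=\rest[k]{x}\}$, and letting $n\to\infty$ (legitimate since $\subIN$ is $T(\group)$-invariant) yields $\mathbf{\hom}\circ T_\phi=\mathbf{\hom}$; in particular every coordinate $\hom_{x_k}$ is $\group$-invariant, hence $\sigma(\group)$-measurable.

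For the \textbf{invariance statements}, fix $\phi\in\group_j$ and $k$, and put $m:=\max(j,k)$. By \eqref{eq:affects:only:finite:prefix} the map $z\mapsto\rest[k]{T_\phi z}$ factors as $g(\rest[m]{z})$ with $g:=\rest[k]{\argdot}\circ T_m(\phi,\argdot)\colon\OUT_m\to\OUT_k$, which is continuous because $\OUT_m$ is discrete. For $n\geq m$, $g(\Alg{n}{m}(y))=\rest[k]{T_\phi T(\Phi_n,y)}=\rest[k]{T(\phi\Phi_n,y)}\equdist\rest[k]{T(\Phi_n,y)}=\Alg{n}{k}(y)$ by fact (b); passing $n\to\infty$ and using continuity of $g$ together with $\Alg{n}{m}(y)\weakly\AlgInf{m}(y)$ and $\Alg{n}{k}(y)\weakly\AlgInf{k}(y)$ gives $\rest[k]{T_\phi(\AlgInf{\infty}(y))}=g(\AlgInf{m}(y))\equdist\AlgInf{k}(y)=\rest[k]{\AlgInf{\infty}(y)}$. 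As this holds for all $k$, $T_\phi(\AlgInf{\infty}(y))\equdist\AlgInf{\infty}(y)$, so $P_y$ is $\group$-invariant; specialising to $j\leq k$ and using \eqref{eq:affects:only:finite:prefix} once more gives $\group_k$-invariance of $\Law(\AlgInf{k}(y))$. For \textbf{idempotence}, note $\Alg{n}{m}(y)$ has $\group_m$-invariant law (the previous computation with $k=m$, or fact (a) via $\group_m\subset\group_n$), so applying a fresh uniform $\Phi_m'\in\group_m$ does not change its law (fact (a)), whence $\Alg{m}{k}(\Alg{n}{m}(y))\equdist\rest[k]{\Alg{n}{m}(y)}=\Alg{n}{k}(y)$; the same argument with $\AlgInf{m}(y)$ in place of $\Alg{n}{m}(y)$—its law is $\group_m$-invariant by the part just proved—gives $\Alg{m}{k}(\AlgInf{m}(y))\equdist\AlgInf{k}(y)$.

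The remaining three claims are linked by the following reductions, all of which follow once $P_y$ is shown to be $\group$-ergodic for every $y$. First, ergodicity gives $\sigma(\group)\subset\Sigma_y$ for every $y$, hence $\sigma(\group)\subset\Sigma$. Second, by \cref{lemma:main} applied to $X=\AlgInf{\infty}(y)$ (invariant, by the previous paragraph) with $f=f_k=\mathbb{I}\{\rest[k]{\argdot}=x_k\}$ and $\A_n=\group_n$, the empirical average $\empavg_n^X(f)=|\group_n|^{-1}\sum_{\phi\in\group_n}\mathbb{I}\{\rest[k]{T_\phi X}=x_k\}=\PAlg\{\Alg{n}{k}(\rest[n]{X})=x_k\mid X\}$ converges a.s.\ to $\xi(f)$, and its limit is precisely $\hom_{x_k}(X)$; since ergodicity forces $\xi=P_y$, we get $\mathbf{\hom}(\AlgInf{\infty}(y))\equas\mathbf{\hom}(y)$, which is (ii). Third, (ii) plus $\sigma(\group)$-measurability of the $\hom_{x_k}$ gives resolvency and (iii): if $y\equivIN y'$ then for each $x_k$ the set $\{z:\hom_{x_k}(z)=\hom_{x_k}(y)\}$ lies in $\Sigma$ (by (ii) it is $0$–$1$ under every $P_z$) and has $P_y$-measure $1$, hence $P_{y'}$-measure $1$, forcing $\hom_{x_k}(y')=\hom_{x_k}(y)$; thus $\mathbf{\hom}(y)=\mathbf{\hom}(y')$, and conversely $\mathbf{\hom}(y)=\mathbf{\hom}(y')$ forces $P_y=P_{y'}$ because a measure on $\OUT$ is determined by its finite-dimensional distributions $\PAlg\{\rest[k]{\AlgInf{\infty}(y)}=x_k\}=\hom_{x_k}(y)$. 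Finally the dichotomy is pure logic: if $P_y\neq P_{y'}$ then, $S$ being resolvent, $y\not\equivIN y'$, so some $A\in\Sigma$ has $P_y(A)\neq P_{y'}(A)$; as both values lie in $\{0,1\}$, one is $1$ and the other $0$, which is exactly mutual singularity witnessed by a $\Sigma$-set, i.e.\ \eqref{eq:sigma:separation:hat}.

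The \textbf{main obstacle} is therefore the ergodicity of $P_y$, equivalently (by the above) the statement $\mathbf{\hom}(\AlgInf{\infty}(y))\equas\mathbf{\hom}(y)$. The natural route is a Hewitt–Savage-type $0$–$1$ law. Using the countability of $\group$, triviality of $P_y$ on the invariant $\sigma$-algebra $\sigma(\group)$ is equivalent to triviality on the almost-invariant $\sigma$-algebra $\bsigma(\group)$ (replace $A$ by the $P_y$-equivalent invariant set $\bigcap_{\phi\in\group}T_\phi^{-1}A$), so it suffices to show $P_y(A)\in\{0,1\}$ for $A\in\sigma(\group)$. One would approximate $\mathbb{I}_A$ in $L^2(P_y)$ by $\mathbb{I}_C$ with $C$ a cylinder of some fixed prefix size $r$ (clopen, hence a $P_y$-continuity set, so $\mu_n^y(C)\to P_y(C)$), exploit $\group_n$-invariance of $\mu_n^y$ and of $P_y$ and invariance of $A$, and deduce $P_y(A)(1-P_y(A))\leq\varepsilon$; the identity $|\group_n|^{-1}\sum_{\theta\in\group_n}P_y(C\cap T_\theta^{-1}C)=\|\,\mean_{P_y}[\mathbb{I}_C\mid\mathcal E_n]\,\|_2^2$ (where $\mathcal E_n$ is the $\group_n$-invariant $\sigma$-algebra) reduces everything to showing $|\group_n|^{-1}\sum_{\theta\in\group_n}P_y(C\cap T_\theta^{-1}C)\to P_y(C)^2$. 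This last decorrelation is the crux: in the vertex-sampling case it is the independence of disjoint coordinate blocks of an i.i.d.\ sequence, but for a general prefix action there is no literal independence, and one must instead extract it from amenability of $\group$—concretely from the second Følner condition \eqref{eq:Folner}(ii), which bounds $\bigcup_{j<n}\group_j^{-1}\group_n$ and is exactly what lets a "generic" $\theta\in\group_n$ carry the $r$-prefix off itself—or from a structural fact about ergodic measures of direct-limit groups / prefix actions (presumably the criteria of \cref{app:addenda}, using \cref{lemma:direct:union:continuous:action} that these actions are continuous). I expect this decorrelation step to be the only place where real work, beyond the bookkeeping above, is required.
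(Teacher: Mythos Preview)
Your treatment of (i), $\group$-invariance of $P_y$, $\group_k$-invariance of the finite-sample laws, and idempotence is essentially the paper's: both arguments rest on the translation-invariance of the uniform law on $\group_n$ (your fact (b), the paper's display \eqref{dist:eq:prefix:action}) together with continuity of restriction and the prefix-action commutation \eqref{eq:affects:only:finite:prefix}. Your reductions from (ii) to (iii), resolvency, and the dichotomy are also correct and match the paper's: the $\group$-invariant fibre sets $\mathbf{\hom}^{-1}\mathbf{\hom}(y)$ carry $P_y$-mass one by (ii), and since $\restk{P_y}(x_k)=\hom_{x_k}(y)$ the vector $\mathbf{\hom}(y)$ determines $P_y$.

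The substantive divergence is how (ii) itself is obtained. You isolate $\group$-ergodicity of each $P_y$ as the ``main obstacle'', plan to derive (ii) from it via \cref{lemma:main}, and then leave the Hewitt--Savage-type decorrelation step unfinished. The paper does not take this route at all: it never proves, or even asserts, ergodicity of $P_y$ --- note ergodicity is not among the theorem's conclusions, and in \cref{result:estimate:f:of:y} it reappears as an explicit additional \emph{hypothesis} rather than a consequence. Instead the paper argues (ii) in one line directly from the idempotence already established: by definition $\hom_{x_k}(\AlgInf{\infty}(y))=\lim_n\PAlg\{\Alg{n}{k}(\AlgInf{n}(y))=x_k\}$, and idempotence collapses the inner re-sampling, giving $\PAlg\{\Alg{n}{k}(\AlgInf{n}(y))=x_k\}=\PAlg\{\AlgInf{k}(y)=x_k\}=\hom_{x_k}(y)$. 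So the decorrelation problem you flag as the crux simply does not arise on the paper's path; you have set yourself a strictly harder intermediate goal than the theorem requires, and the gap you acknowledge is an artefact of that choice.
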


One can ask whether it is even possible to recover properties of the input
graph: If ${\subIN\subset\OUT}$ and ${f:\OUT\rightarrow\mathbb{R}}$ is a statistic, can 
${f(y)}$ be estimated based on ${\AlgInf{\infty}(y)}$? 
Since the sampling algorithm does not resolve differences between to equivalent input graphs ${y\equivIN y'}$, 
a minimal requirement is that $f$ be constant on equivalence classes, 
\begin{equation}
  \label{eq:constant:on:equivIN}
  f(y)=f(y')\qquad\text{ whenever } y\equivIN y'\;.
\end{equation}
For algorithms defined by random transformations, the law of large numbers strengthens to:\nolinebreak
\begin{corollary}
  \label{result:estimate:f:of:y}
  Suppose a sampling algorithm $S$ satisfies \eqref{conditions:prefix:sampler}, and 
  ${f:\OUT\rightarrow\mathbb{R}}$ is a Borel function satisfying
  \eqref{eq:constant:on:equivIN}. Require $\AlgInf{\infty}(y)$ is $T(\group)$-ergodic.
  Let $(f_m)$ be a sequence of functions on $\OUT$. 
  Then for every $y$ with (i) ${f\in\Lone(P_y)}$ and (ii) ${f_m\rightarrow f}$ $P_y$-a.s.,
  \begin{equation*}
    \frac{1}{|\group_k|}\sum_{\phi\in\group_k}f_k(\AlgInf{\infty}(y))\xrightarrow{k\rightarrow\infty}
    f(y)\qquad P_y\text{-a.s.}
  \end{equation*}
  If $y$ is replaced by a $\subIN$-valued random variable $Y$, and (i) and (ii) hold $\Law(Y)$-a.s.,
  convergence holds $\Law(Y)$-a.s.
\end{corollary}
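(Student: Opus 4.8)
The plan is to deduce the corollary from the symmetric law of large numbers \cref{lemma:main}, using \cref{result:direct:union:sampler} to identify the ergodic limit that law produces. Fix $y$ satisfying (i)--(ii), and set $X:=\AlgInf{\infty}(y)$, which exists by \cref{result:alg:limit} and has law $P_y$. Under \eqref{conditions:prefix:sampler}, $\group=\cup_n\group_n$ is a direct limit of finite groups, hence a countable group, and it satisfies the F{\o}lner-type condition \eqref{eq:Folner} with $\A_k:=\group_k$ (as recorded in \cref{sec:transformation}); the empirical measure defined by this sequence is $\empavg_k^{x}=\frac{1}{|\group_k|}\sum_{\phi\in\group_k}\delta_{T_\phi(x)}$, so the left-hand side of the asserted convergence is the average $\empavg_k^{\AlgInf{\infty}(y)}(f_k)=\frac{1}{|\group_k|}\sum_{\phi\in\group_k}f_k(T_\phi(\AlgInf{\infty}(y)))$ (matching the empirical-measure notation of \cref{lemma:main}; cf.\ the displays after \cref{corollary:lln:sampler}). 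By \cref{result:direct:union:sampler}, $P_y$ is $\group$-invariant, so $X$ is $T(\group)$-invariant; hence \cref{lemma:main} applies to $X$, the action $T$, the limit $f\in\Lone(P_y)$ and the sequence $(f_k)$, and gives
\begin{equation*}
  \empavg_k^{X}(f_k)\;\xrightarrow{k\to\infty}\;\xi(f)\qquad P_y\text{-a.s.,}
\end{equation*}
where $\xi$ is the random $T(\group)$-ergodic measure in the decomposition \eqref{eq:ergodic:decomposition} of $X$.

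Next I would collapse the decomposition by invoking ergodicity. Since $X=\AlgInf{\infty}(y)$ is assumed $T(\group)$-ergodic, $P_y\in\ERG(T(\group))$, and then the constant random measure $\xi'\equiv P_y$ is $T(\group)$-ergodic and satisfies $P[X\in\argdot\,|\,\xi']=P[X\in\argdot]=\xi'(\argdot)$ (conditioning on a constant is trivial). By the uniqueness clause of \cref{theorem:ergodic:decomposition} the law of $\xi$ is therefore $\delta_{P_y}$, i.e.\ $\xi=P_y$ almost surely and $\xi(f)=P_y(f)$ almost surely; so the display above reads $\empavg_k^{X}(f_k)\to P_y(f)$ $P_y$-a.s.

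It remains to identify $P_y(f)$ with $f(y)$. By \cref{result:direct:union:sampler}(ii), $\mathbf{\hom}(\AlgInf{\infty}(y))=\mathbf{\hom}(y)$ $P_y$-almost surely (in particular $\AlgInf{\infty}(y)\in\subIN$ with $P_y$-probability one), and by \cref{result:direct:union:sampler}(iii) equality of prefix-density vectors is equivalent to $\equivIN$; hence $\AlgInf{\infty}(y)\equivIN y$ $P_y$-a.s. As $f$ satisfies \eqref{eq:constant:on:equivIN}, it is constant on $\equivIN$-classes, whence $f(\AlgInf{\infty}(y))=f(y)$ $P_y$-a.s., and since $f\in\Lone(P_y)$ this gives $P_y(f)=\mean[f(\AlgInf{\infty}(y))]=f(y)$. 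On the intersection of these $P_y$-a.s.\ events, $\empavg_k^{\AlgInf{\infty}(y)}(f_k)\to\xi(f)=P_y(f)=f(y)$, which is the claim for fixed $y$. For a $\subIN$-valued random input $Y$ with (i)--(ii) holding $\Law(Y)$-a.s., the law of $\AlgInf{\infty}(Y)$ is the mixture $\int P_y\,\Law(Y)(dy)$; conditioning on $\{Y=y\}$ and applying the fixed-$y$ conclusion for $\Law(Y)$-almost every $y$, a routine Fubini argument promotes the convergence to $\Law(Y)$-a.s.

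Almost all the analytic content is imported: \cref{lemma:main} supplies the law of large numbers, \cref{result:direct:union:sampler} the $\group$-invariance of $P_y$ together with the prefix-density identities, and \cref{theorem:ergodic:decomposition} the uniqueness that collapses $\xi$. The step most in need of care is that collapse, which genuinely uses the \emph{uniqueness} assertion of \cref{theorem:ergodic:decomposition} (not merely the expectation/mixture form of the decomposition) to exclude nontrivial representing measures. A secondary caveat is integrability of the approximating functions: \cref{lemma:main} is stated for $f_k\in\Lone(X)$, whereas the statement allows an arbitrary sequence $(f_k)$; this is immaterial in the intended applications (e.g.\ indicators of cylinder events, as discussed after \cref{corollary:lln:sampler}), but in general one should either add the hypothesis $f_k\in\Lone(P_y)$ or check that the almost-sure conclusion \eqref{eq:lln} of \cref{lemma:main} persists for arbitrary finite-valued $f_k$.
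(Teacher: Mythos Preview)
Your proposal is correct and follows the same overall strategy as the paper: apply the law of large numbers (\cref{lemma:main}) to obtain convergence to $\xi(f)$, collapse $\xi=P_y$ via ergodicity, and then identify $P_y(f)=f(y)$ by showing $f(\AlgInf{\infty}(y))=f(y)$ almost surely. The paper's written proof is terser---it spells out only the last identification step, leaving the LLN and the collapse of $\xi$ implicit---so your explicit invocation of the uniqueness clause in \cref{theorem:ergodic:decomposition} to force $\xi=P_y$ is a welcome clarification rather than a departure.

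The one substantive methodological difference is in how $f(\AlgInf{\infty}(y))=f(y)$ is obtained. You argue directly: $\mathbf{\hom}(\AlgInf{\infty}(y))=\mathbf{\hom}(y)$ a.s.\ by \cref{result:direct:union:sampler}(ii), hence $\AlgInf{\infty}(y)\equivIN y$ by (iii), and constancy of $f$ on $\equivIN$-classes finishes. The paper instead invokes a Borel-section lemma (\cref{lemma:measured:section}) to construct a measurable left inverse $\sigma_Q$ of $\mathbf{\hom}$ and factorize $f=f'\circ\mathbf{\hom}$ with $f':=f\circ\sigma_Q$ outside a $\Law(Y)$-null set; the identity then follows by composing with \cref{result:direct:union:sampler}(ii). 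Your route is shorter and avoids the section machinery. The paper's route buys an explicit, measurable factorization of $f$ through $\mathbf{\hom}$, which is the point the paper returns to in \cref{result:prefix:action:selector} when asking whether such a factorization can be made measure-free. Both arguments tacitly rely on $\AlgInf{\infty}(y)\in\subIN$ a.s.\ (so that $\equivIN$, respectively the section identity, applies there); this is harmless since the existence of $\mathbf{\hom}(\AlgInf{\infty}(y))$ is part of \cref{result:direct:union:sampler}(ii).
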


\subsection{The topology induced by a sampling algorithm}

Any sampling algorithm $S$ whose prefix densities exist on a set ${\subIN}$ induces a topology on this set,
the weak topology of $\mathbf{\hom}$ (i.e.\ the smallest topology on $\subIN$ that makes each
prefix density ${t_{x_k}:\subIN\rightarrow[0,1]}$ continuous). Informally speaking, if 
the equivalence classes of $\equivIN$ coincide with the fibers of $\mathbf{\hom}$ (as is the case
in \cref{result:direct:union:sampler}),
this is the smallest topology that distinguishes input points whenever they
are distinguishable by the sampler.
If $S$ is defined by \cref{alg:uvertex:simple}, 
the prefix
densities are precisely the ``homomorphism densities'' of graph limit theory---depending on the definition,
possibly up to normalization \citep{Diaconis:Janson:2007}. The weak topology of ${\mathbf{\hom}}$ is hence
the cut norm topology \citep{Borgs:Chayes:Lovasz:Sos:Vesztergombi:2008}. 
The cut norm topology is defined on the set of undirected,
simple graphs with vertex set $\mathbb{N}$, and coarsens the almost discrete topology on this set.
One may hence ask how this property depends on the
sampler: \emph{Under what conditions on the subsampling algorithm does the topology induced by the sampler
coarsen the topology of the input space?} If the algorithm is defined by random transformation
as above, that is always the case:
\begin{proposition}
  \label{result:continuous:prefix:densities}
  Let $S$ be a sampling algorithm defined as in \eqref{eq:random:transformation:sampling} by a prefix action $T$
  on an almost discrete space $\OUT$. Let ${\subIN}$ be any topological subspace
  of $\OUT$ such that the prefix densities exist
  for each ${y\in\subIN}$. Then $\mathbf{\hom}$ is continuous on $\subIN$.
\end{proposition}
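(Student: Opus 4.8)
The plan is to reduce the claim to continuity of each coordinate $\hom_{x_k}\colon\subIN\to[0,1]$, to dispatch the finite-stage approximants $a_n(y):=\PAlg\{\Alg nk(\restn{y})=\restk{x}\}$ by a ``factors through a restriction map'' argument, and then to promote the pointwise convergence $a_n\to\hom_{x_k}$ to continuity of the limit using the structural properties in \cref{result:direct:union:sampler}.

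Since $[0,1]^{\cup_k\OUT_k}$ carries the product topology, $\mathbf{\hom}$ is continuous on $\subIN$ iff each $\hom_{x_k}$ is; so fix $k$ and $x_k\in\OUT_k$ and write $\hom_{x_k}(y)=\lim_{n}a_n(y)$. The key elementary observation is that $a_n$ factors through $\restn{\argdot}$. Because $T$ is a prefix action and $\Alg nk(y)=\restk{T(\Phi_n,y)}$ with $\Phi_n\in\group_n$, property \eqref{eq:affects:only:finite:prefix} gives $\restk{T(\Phi_n,y)}=\restk{T_n(\Phi_n,\restn{y})}$, so the law of $\Alg nk(\restn{y})$, and hence $a_n(y)$, depends on $y$ only through $\restn{y}$. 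As $\restn{\argdot}\colon\subIN\to\OUT_n$ is continuous into a discrete space, $a_n$ is locally constant, hence continuous, on $\subIN$. (Equivalently: for fixed $n$, $a_n(y)=\tfrac1{|\group_n|}\sum_{\phi\in\group_n}\mathbb{I}\{\restk{T_n(\phi,\restn{y})}=\restk{x}\}$ is a finite average of continuous maps --- using \cref{lemma:direct:union:continuous:action} --- composed with the indicator of a clopen cylinder.)

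It remains to pass from ``each $a_n$ continuous'' together with ``$a_n\to\hom_{x_k}$ pointwise on $\subIN$'' to ``$\hom_{x_k}$ continuous''. Fixing $y_0\in\subIN$ and $\varepsilon>0$, choose $n$ with $|a_n(y_0)-\hom_{x_k}(y_0)|<\varepsilon$; then the cylinder $C:=\{y\in\subIN:\restn{y}=\restn{y_0}\}$ is an open neighbourhood of $y_0$ on which $a_n$ is constant, and it suffices to show that, shrinking $C$ if necessary, $|a_m(y)-a_n(y)|<\varepsilon$ holds for all $y\in C$ and all $m\ge n$ --- i.e.\ that $a_m\to\hom_{x_k}$ \emph{locally uniformly} on $\subIN$. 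For this I would invoke \cref{result:direct:union:sampler}: idempotency gives $\Alg mk(\Alg nm(\restn{y}))\equdist\Alg nk(\restn{y})$ and $\Alg mk(\AlgInf m(y))\equdist\AlgInf k(y)$, so the tail $(a_m(y))_{m\ge n}$ is governed by the laws of the finite samples $\AlgInf m(y)$; together with resolvency, the a.s.\ identity $\mathbf{\hom}(\AlgInf{\infty}(y))\equas\mathbf{\hom}(y)$, and the fact (via \cref{result:alg:limit}) that $\hom_{x_k}(y)=\Pr[\AlgInf k(y)=\restk{x}]$, one aims to bound this tail oscillation uniformly over $C$.

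The main obstacle is exactly this last step: the interchange of the limit $n\to\infty$ defining the prefix density with limits along nets $y_j\to y_0$ in $\subIN$. A pointwise limit of locally constant functions on an almost discrete space need not be continuous, so the conclusion genuinely uses the random-transformation construction and the idempotency/resolvency statements of \cref{result:direct:union:sampler}; establishing the needed local uniformity is where the work lies, and the rest is formal.
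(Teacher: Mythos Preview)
Your skeleton matches the paper's: reduce to continuity of each coordinate $\hom_{x_k}$, note that $a_n$ factors through $\restn{\argdot}$ and is therefore locally constant, then face the interchange of limits. At that point the paper does \emph{not} go through idempotency or resolvency. It argues directly via a Moore--Osgood double limit: for a sequence $y_i\to y$ in $\subIN$, set $\alpha_{in}:=a_n(y_i)$; then $\lim_n\alpha_{in}=\hom_{x_k}(y_i)$ exists for each $i$ by hypothesis, and for fixed $n$ the sequence $(\alpha_{in})_i$ is eventually equal to $a_n(y)$, namely once $\restn{y_i}=\restn{y}$. The paper asserts this latter convergence is uniform in $n$ and concludes that the iterated limits commute. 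So your proposed route through \cref{result:direct:union:sampler} is not the paper's, and in any case you have not shown how idempotency or resolvency would yield the needed local uniformity.

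Your instinct that the interchange is the real obstacle is correct, and the detour through \cref{result:direct:union:sampler} cannot close it. Take $\OUT=\{0,1\}^\infty$ with $\FSym$ acting by permuting coordinates, $y=(1,0,0,\ldots)$ and $y^{(m)}=(1,0^{m-1},1,1,\ldots)$; close under $\FSym$ if one insists on an invariant $\subIN$. Then $y^{(m)}\to y$ in $\OUT$, all prefix densities exist (the limiting fraction of $1$s equals $0$ for $y$ and $1$ for each $y^{(m)}$), yet for $x_1=1\in\OUT_1$ one has $\hom_{x_1}(y^{(m)})=1\not\to 0=\hom_{x_1}(y)$. The sampler here is precisely the random-transformation sampler \eqref{eq:random:transformation:sampling}, so idempotency and resolvency hold by \cref{result:direct:union:sampler}; they cannot manufacture the missing local uniformity. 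The same example shows that the threshold $i_0(n)$ at which $\restn{y_i}=\restn{y}$ grows without bound in $n$, so the paper's uniformity assertion is likewise unsupported in this generality. The step you flag as ``where the work lies'' cannot be completed without an additional hypothesis on $\subIN$.
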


\section{Selecting vertices independently}
\label{sec:sampling:vertices}

Throughout this section, we choose both input space $\IN$ and the output space $\OUT$ as
the set of simple, undirected graphs with vertex set $\bbN$, and
$\textrestk{\argdot}$ extracts the induced subgraph on the first $k$ vertices.

\subsection{Exchangeability and graphons}
\label{sec:graphons}

\cref{alg:uvertex:simple} selects a subgraph uniformly from the set of all subgraphs
of size $n$ of the input graph $\textrestn{y}$. Such uniform random subgraphs are integral
to the definition of graphons 
\citep{Borgs:Chayes:Lovasz:Sos:Vesztergombi:2008,Borgs:Chayes:Lovasz:Sos:Vesztergombi:2012:1}, and the prefix densities are in this case precisely the \emph{homomorphism densities} of graph limit theory (up to
normalization). It is thus a well-known fact that \cref{alg:uvertex:simple} induces the
class of graphon models, whose relationship to exchangeable random graphs has in turn be clarified
by \citet{Diaconis:Janson:2007} and \citet{Austin:2008}.

Applied to this case, our results take the following form: 
\cref{alg:uvertex:simple} can equivalently be represented as a random transformation
\eqref{eq:random:transformation:sampling}. Define $T$ as the action of 
$\FSym$ that permutes the vertex labels of a graph, and rewrite \cref{alg:uvertex:simple} as:
\addtocounter{algorithm}{-1}
\begin{algorithmdash}
  \label{alg:uvertex:random:trafo}
  \begin{tabular}{rl}
    \text{{\small i.})} & \text{Draw ${\Phi_n\sim\text{Uniform}(\Sym{n})}$.}\\
    \text{{\small ii.})} & \text{Generate the permuted graph ${X_n:=\Phi_n(\textrestn{y})}$.}\\
    \text{{\small iii.})} & \text{Report the subgraph ${\Alg{n}{k}(y):=\textrestk{X_n}}$.}\\
  \end{tabular}
\end{algorithmdash}
Clearly, \cref{alg:uvertex:random:trafo} and \ref{alg:uvertex:simple} are equivalent.
It is possible
to construct pathological input graphs $y$ for which prefix
densities do not exist; we omit details, and simply define
${\subIN:=\braces{y\in\IN\,|\,\text{\cref{alg:uvertex:random:trafo} has prefix densities}}}$.
Then $\subIN$ is invariant under $\FSym$, and we obtain from Theorems \ref{lemma:main} and
\ref{result:direct:union:sampler}:
\begin{corollary}
  \cref{alg:uvertex:simple} is idempotent, and the limiting random graph 
  ${\AlgInf{\infty}(y)}$ is exchangeable.
  Let ${f\in\Lone(P_y)}$ be a function constant on each equivalence class of $\equivIN$. 
  Then if functions ${f_k:\OUT_k\rightarrow\mathbb{R}}$ satisfy
  ${f_k(\textrestk{x})\rightarrow f(x)}$ for all $x$ outside a $P_y$-null set, 
  \begin{equation*}
    \frac{1}{k!}\sum_{\pi\in\Sym{k}}f_k(T_{\pi}(\AlgInf{k}(y))
    \quad\xrightarrow{k\rightarrow\infty}\quad
    f(y)\qquad\text{ almost surely.}
  \end{equation*}
  The equivalence classes of $\equivIN$ are the fibers of $\mathbf{\hom}$.
\end{corollary}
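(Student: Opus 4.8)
The plan is to verify that \cref{alg:uvertex:random:trafo} is an instance of the hypotheses \eqref{conditions:prefix:sampler} and then to read all four assertions off Theorems~\ref{lemma:main} and~\ref{result:direct:union:sampler} together with \cref{result:estimate:f:of:y}. First I would record that $\FSym=\bigcup_n\Sym{n}$ is a direct limit of finite groups and that the vertex-relabelling action $T$ is a prefix action: an element of $\Sym{n}$ affects only the first $n$ vertices, and $\restn{T_\phi(x)}=T_n(\phi,\restn{x})$, where $T_n$ is the analogous action of $\Sym{n}$ on $\OUT_n$, which is exactly \eqref{eq:affects:only:finite:prefix}. By the equivalence of Algorithms~\ref{alg:uvertex:simple} and~\ref{alg:uvertex:random:trafo} noted in the text, $S$ has the form \eqref{eq:random:transformation:sampling} with $\Phi_n\sim\Uniform(\Sym{n})=\Uniform(\group_n)$, which gives (i) and (ii) of \eqref{conditions:prefix:sampler}. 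For (iii) I would check that $\subIN$ is $T(\FSym)$-invariant: for $\phi\in\Sym{m}$ and $n\ge m$ one has $\restn{T_\phi(y)}=T_n(\phi,\restn{y})$, and the law of $\Alg{n}{k}(T_n(\phi,\restn{y}))$ equals that of $\Alg{n}{k}(\restn{y})$ because a uniform $k$-sample of vertices, and hence the resulting induced subgraph, is unchanged in distribution by relabelling the vertex set; letting $n\to\infty$ gives $\mathbf{\hom}(T_\phi(y))=\mathbf{\hom}(y)$ whenever either side is defined, so $T_\phi(y)\in\subIN$ iff $y\in\subIN$. Thus \eqref{conditions:prefix:sampler} holds.

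With the hypotheses in place, \cref{result:direct:union:sampler} yields at once that $S$ (hence \cref{alg:uvertex:simple}) is idempotent, that each $P_y=\Law(\AlgInf{\infty}(y))$ is $\FSym$-invariant, and --- by part~(iii) of that theorem --- that $\mathbf{\hom}(y)=\mathbf{\hom}(y')$ precisely when $y\equivIN y'$, which is the statement that the $\equivIN$-classes are the fibres of $\mathbf{\hom}$. To pass from ``$\FSym$-invariant'' to exchangeability in the $\Sym{\mathbb{N}}$-sense, I would invoke the density argument of \cref{sec:symmetry}: on $\OUT$ with its almost discrete topology the vertex-permutation action of $\Sym{\mathbb{N}}$ is continuous, so $T(\FSym)$ lies pointwise dense in $T(\Sym{\mathbb{N}})$ and is therefore a separating subset.

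For the law of large numbers I would specialise \cref{result:estimate:f:of:y} to $\group_k=\Sym{k}$, after the routine bookkeeping between functions on $\OUT$ and on $\OUT_k$: given $f_k:\OUT_k\to\mathbb{R}$, set $\bar f_k(x):=f_k(\restk{x})$ (automatically Borel, as $\OUT_k$ is discrete and $\restk{\argdot}$ continuous), so the hypothesis $f_k(\restk{x})\to f(x)$ off a $P_y$-null set becomes $\bar f_k\to f$ $P_y$-a.s.; and for $\pi\in\Sym{k}$ the prefix-action identity \eqref{eq:affects:only:finite:prefix} gives $\bar f_k(T_\pi(\AlgInf{\infty}(y)))=f_k(\restk{T_\pi(\AlgInf{\infty}(y))})=f_k(T_\pi(\AlgInf{k}(y)))$, so $\frac{1}{|\group_k|}\sum_{\phi\in\group_k}\bar f_k(T_\phi(\AlgInf{\infty}(y)))=\frac{1}{k!}\sum_{\pi\in\Sym{k}}f_k(T_\pi(\AlgInf{k}(y)))$. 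The conclusion of \cref{result:estimate:f:of:y}, convergence of the left-hand side to $f(y)$ $P_y$-a.s., is then precisely the limit stated in the corollary.

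The step that is not bookkeeping is the ergodicity hypothesis of \cref{result:estimate:f:of:y}: it requires $\AlgInf{\infty}(y)$ to be $T(\FSym)$-\emph{ergodic}, not merely invariant, and this does not follow from \eqref{conditions:prefix:sampler} alone; I expect it to be the main obstacle. I would close it by identifying $\AlgInf{\infty}(y)$ with the graphon sampling process attached to $y$ (independent uniform vertex selection, then the induced subgraph) and appealing to the classical fact --- Aldous--Hoover, and \citet{Diaconis:Janson:2007}, \citet{Austin:2008} --- that the resulting exchangeable graph law is ergodic; equivalently, the homomorphism densities $\mathbf{\hom}$ generate the $\FSym$-invariant $\sigma$-algebra up to $P_y$-null sets, while $\mathbf{\hom}$ is $P_y$-a.s.\ constant by \cref{result:direct:union:sampler}(ii), so $P_y$ is trivial on $\bsigma(\FSym)$. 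With ergodicity in hand \cref{result:estimate:f:of:y} applies, and all four claims of the corollary follow.
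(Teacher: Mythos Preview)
Your argument is correct and follows the paper's route in spirit: the paper derives the corollary directly ``from Theorems~\ref{lemma:main} and~\ref{result:direct:union:sampler}'', and your verification of \eqref{conditions:prefix:sampler} and the bookkeeping with $\bar f_k$ are exactly what is needed.

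The one point worth noting is that your detour through \cref{result:estimate:f:of:y} forces you to establish ergodicity of $P_y$, which you then discharge by appeal to Aldous--Hoover. This is correct but unnecessary. The paper's direct route avoids it: by \cref{result:direct:union:sampler}(iii), $f$ constant on $\equivIN$-classes means $f$ is constant on the fibres of $\mathbf{\hom}$; by part~(i), $\mathbf{\hom}\circ T_\phi=\mathbf{\hom}$, so $f\circ T_\phi=f$ for every $\phi\in\FSym$, and $f$ is $\sigma(\FSym)$-measurable, hence $\bsigma(\FSym)$-measurable. \cref{lemma:main} then gives $\empavg_k^{\AlgInf{\infty}(y)}(\bar f_k)\to\mean[f(\AlgInf{\infty}(y))\mid\bsigma]=f(\AlgInf{\infty}(y))$ a.s., and by \cref{result:direct:union:sampler}(ii) this equals $f(y)$ a.s. So ergodicity of $P_y$ is a consequence rather than an input, and the external appeal to Aldous--Hoover can be dropped.
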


Let $w$ be a graphon, 
\ie a measurable function ${w:[0,1]^2\rightarrow[0,1]}$ symmetric in its
arguments \citep{Borgs:Chayes:Lovasz:Sos:Vesztergombi:2008}. 
Let $X_w$ be a random graph with the canonical distribution defined by $w$: The
(symmetric) adjacency matrix of $X_w$ is given by
\begin{equation*}
  \bigl(\mathbb{I}\braces{U_{ij}<w(U_i,U_j)}\bigr)_{i<j\in\bbN}
  \qquad\text{ where }(U_i)_{i\in\bbN}\text{ and }(U_{ij})_{i,j\in\bbN}\text{ are i.i.d. }\text{Uniform}[0,1]\;.
\end{equation*}
Let $t_w$ denote the (suitably normalized) vector of homomorphism densities of $w$ 
\citep{Borgs:Chayes:Lovasz:Sos:Vesztergombi:2008,Diaconis:Janson:2007}. 
Comparing the definitions of homomorphism and prefix densities, we have 
${\mathbf{\hom}(X_w)=t_w}$ almost surely. Since the fibers of $\mathbf{\hom}$ are the
equivalence classes of $\equivIN$, we can choose a fixed graph ${y\in\mathbf{\hom}^{-1}(t_w)}$,
and obtain
\begin{equation*} 
  X_w\equivIN y \quad\text{ a.s. }
  \qquad\text{ and }\qquad
  X_w\equdist\AlgInf{\infty}(X_w)\equdist\AlgInf{\infty}(y)\;.
\end{equation*}
For every graphon $w$, there is hence a graph $y$ such that ${X_w\equdist\AlgInf{\infty}(y)}$.
It is well-known that the law of  $X_w$ remains unchanged if a Lebesgue-measure preserving
transformation $\psi$ of ${[0,1]}$ is applied to $w$: If ${w'=w\circ(\psi\otimes\psi)}$, 
then ${X_{w'}\equdist X_w}$. Equivalence classes of graphons hence correspond to equivalence
classes of graphs under $\equivIN$,
\begin{equation*}
  X_{w'}
  \quad\equivIN\quad
  X_w
  \quad\equivIN\quad
  \AlgInf{\infty}(X_w)\qquad\text{ almost surely.}
\end{equation*}

\subsection{Misspecification of graphon models as a sample selection bias}
\label{sec:misspecification}

\begin{figure}
  \makebox[\textwidth][c]{
    \begin{tikzpicture}
      \node[rotate=270] at (0,0) {\includegraphics[width=4cm]{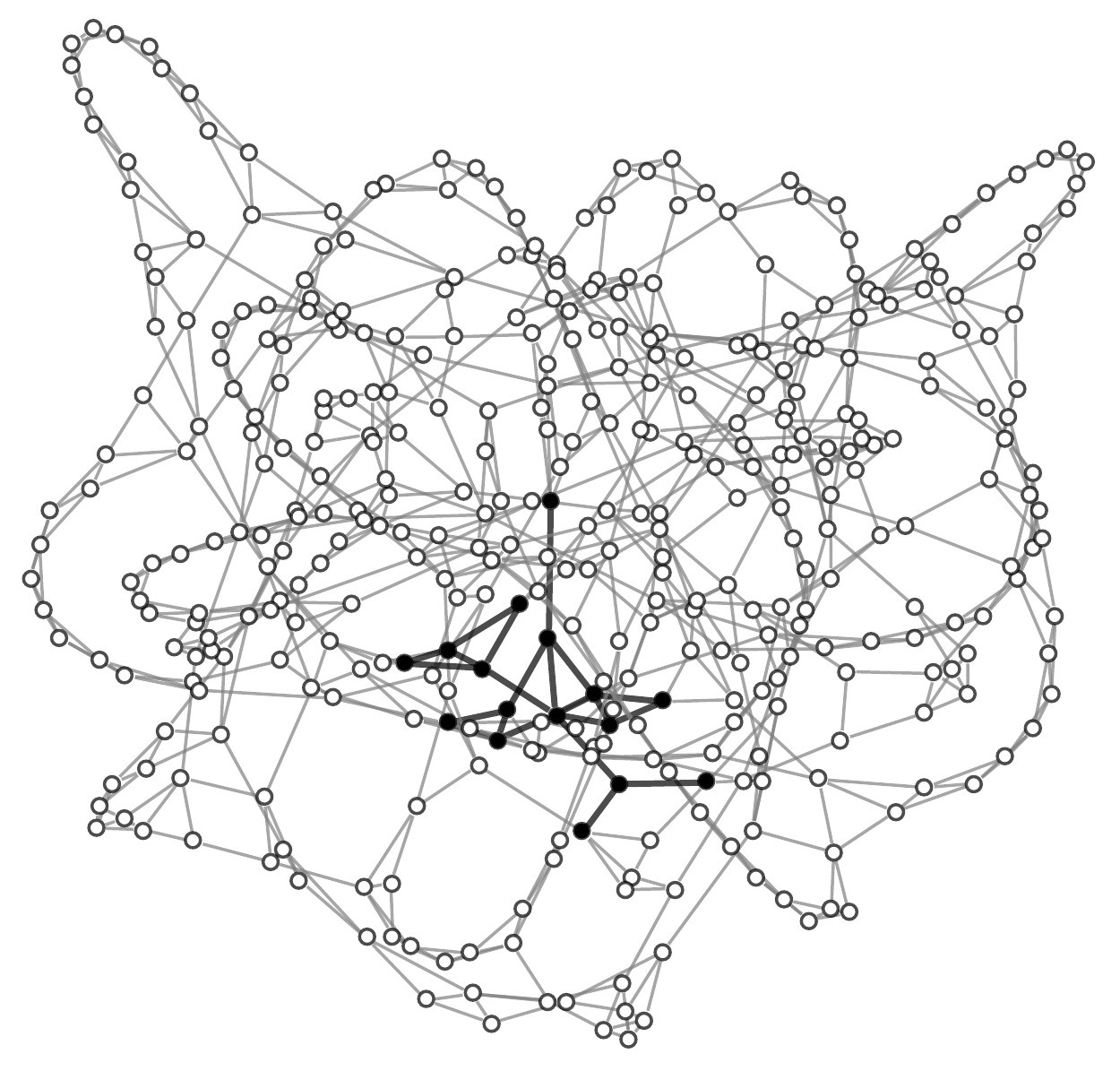}};
      \node[rotate=270] at (4,0) {\includegraphics[width=5cm]{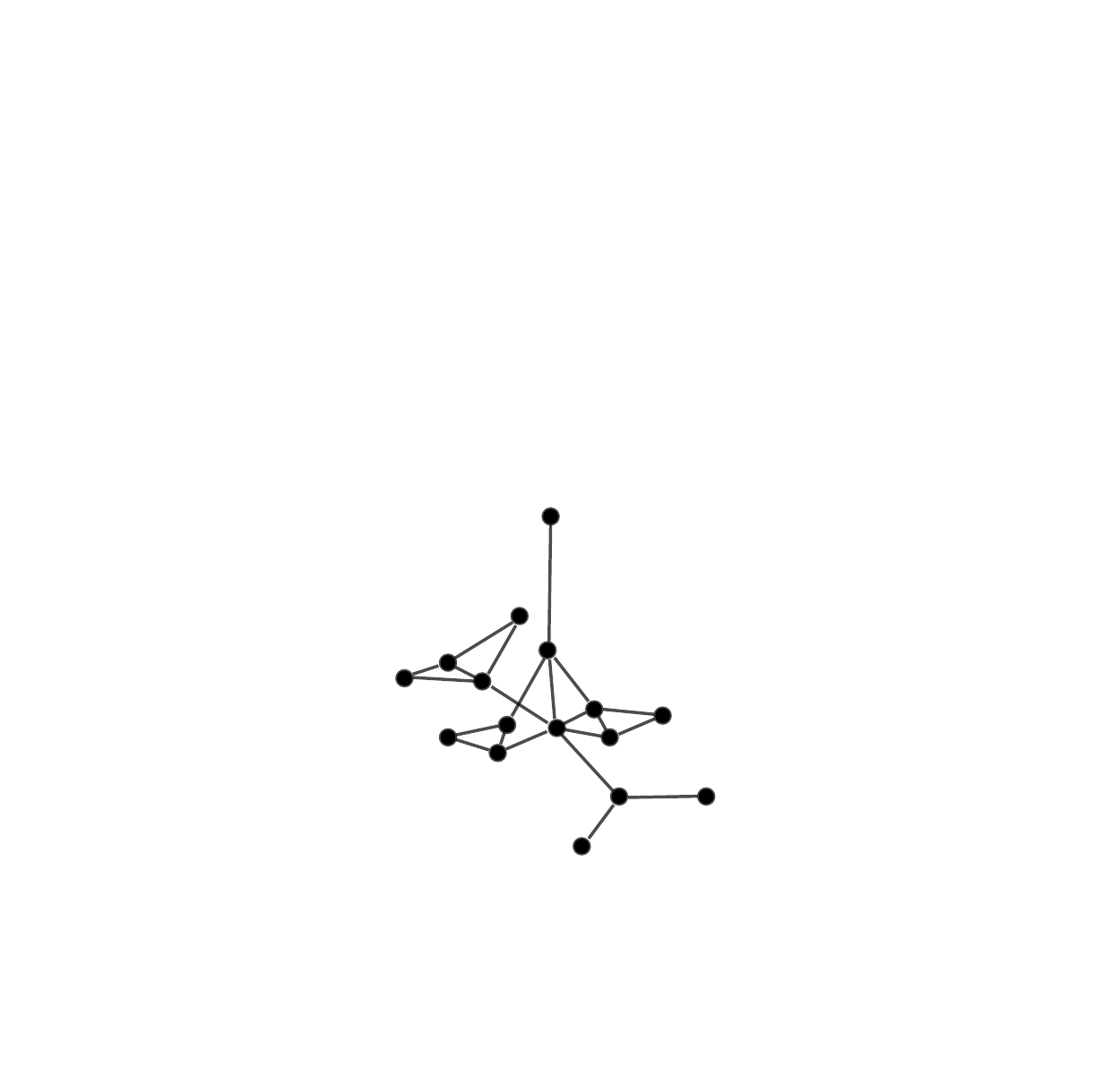}};
      \node at (6.2,0) {\includegraphics[width=3cm]{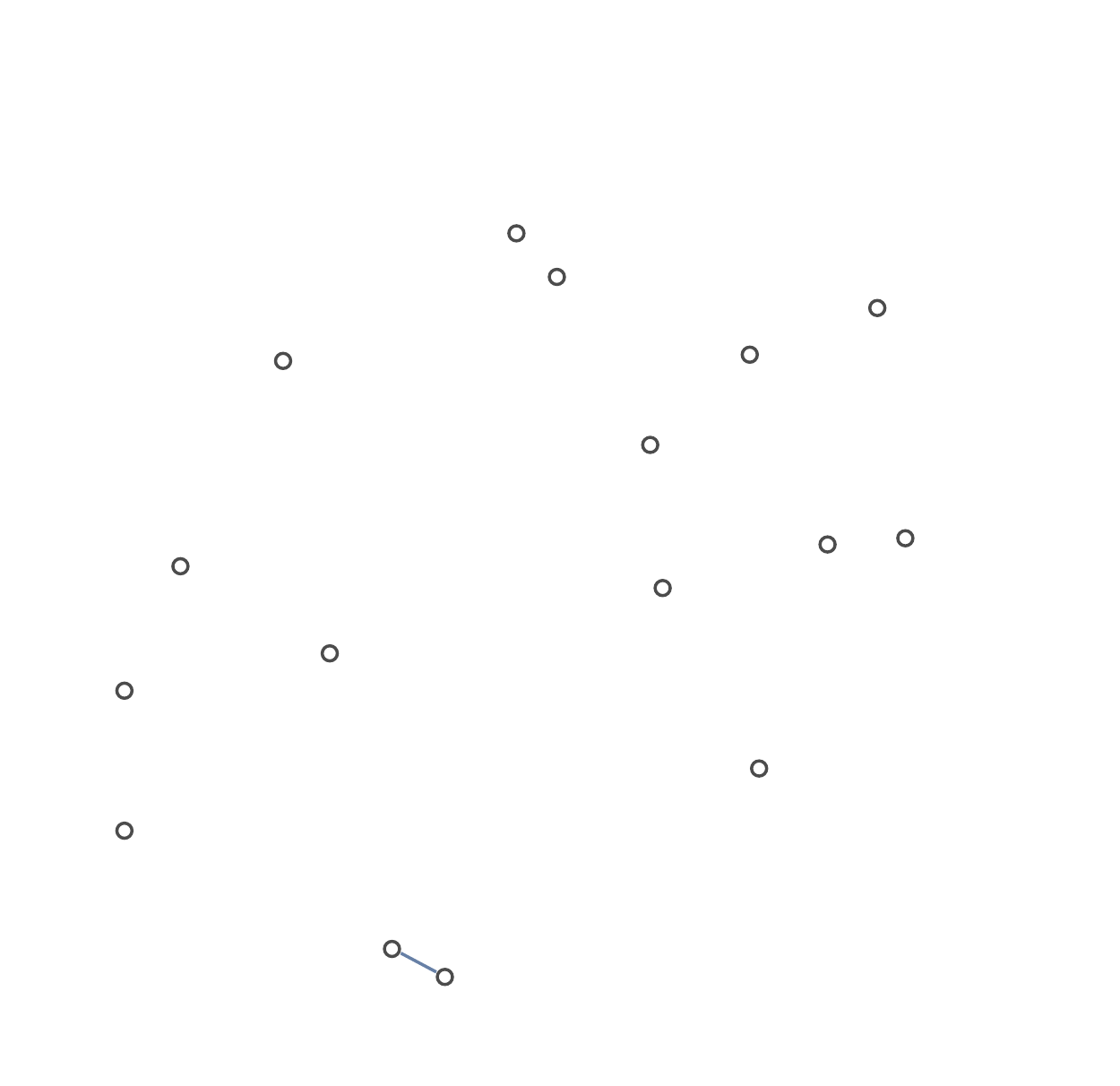}};
      \node at (10,0) {\includegraphics[width=.3\textwidth]{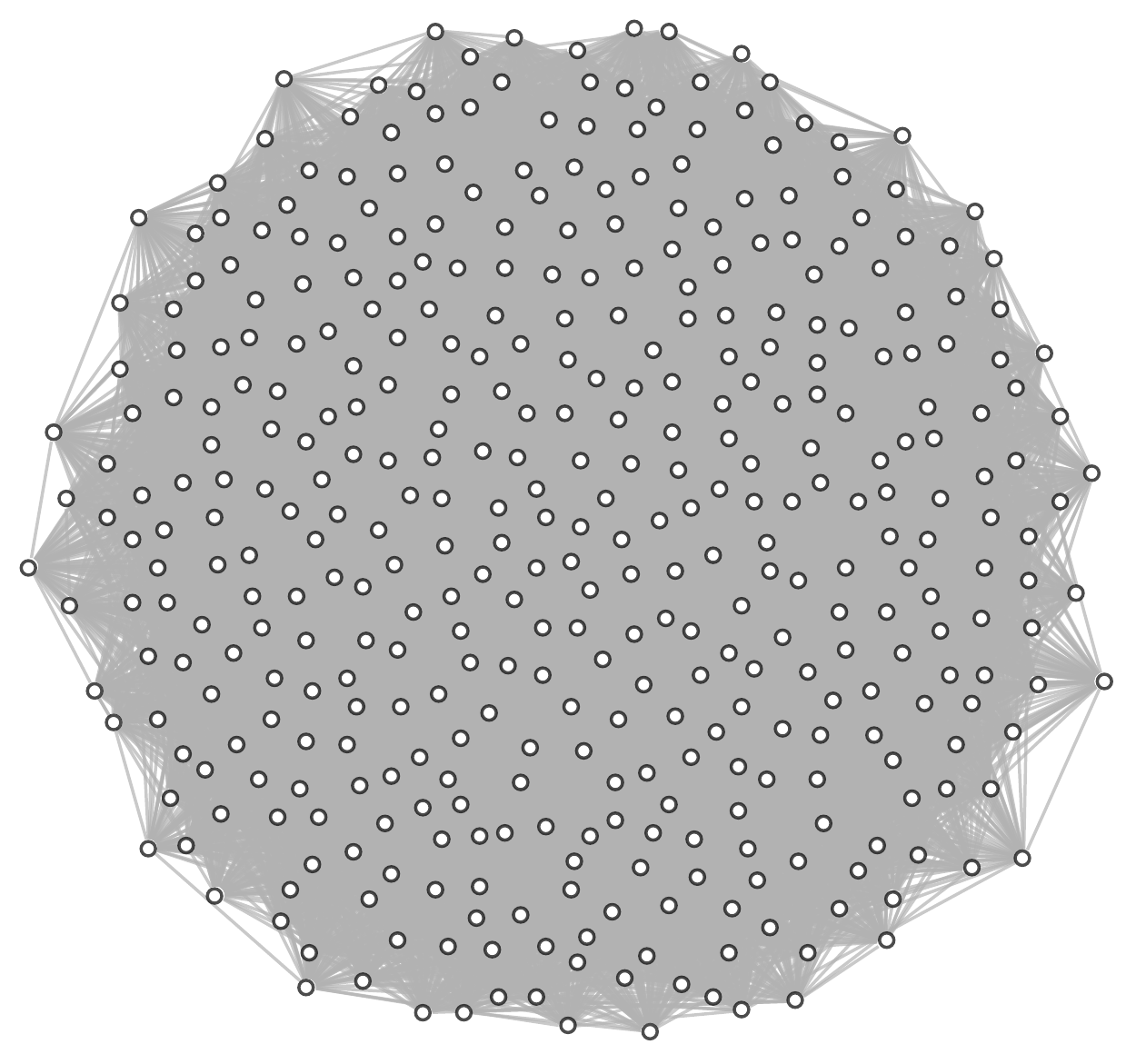}};

      \node at (0,-2.5) {\scriptsize (i)};
      \node at (3.2,-2.5) {\scriptsize (ii)};
      \node at (6.2,-2.5) {\scriptsize (iii)};
      \node at (10,-2.5) {\scriptsize (iv)};
    \end{tikzpicture}
  }
  \caption{\emph{(i)} Input graph. \emph{(ii)} A subsample $X_k$ generated by an algorithm that extracts
    the $n$-neighborhood of a random vertex, here ${n=2}$. The sample $X_k$ has 16 vertices.
  \emph{(iii)} A sample of the same size generated by \cref{alg:uvertex:simple}. 
  \emph{(iv)} Reconstruction of the input graph (i), \ie a sample of with the same number of vertices
  as (i), from a graphon model fit to the sample (ii).}
  \label{fig:misspecification:1}
\end{figure}

A model $\mathcal{P}$ is \kword{misspecified} for data generated by a random variable $X$ if
${\Law(X)\not\in\mathcal{P}}$. If $X$ is with positive probability a sparse graph, that implies,
without any further knowledge of $X$, that a graphon model is misspecified for $X$. This fact
is well known \citep[e.g.][]{Janson:2016:1,Caron:Fox:2014:1,Orbanz:Roy:2014}.

A \kword{sample selection bias} is an erroneous assumption on what properties of the underlying
population a sample is representative of. 
One way in which such biases occur is if the sampling protocol is known, but either its behavior
or the underlying population are insufficiently understood---for example, in opinion polling
problems if the protocol has a systematic tendency to exclude individuals with certain properties.
Another is if false assumptions are made about which sampling
protocol has been used. 
A sampling algorithm $\AlgInf{\infty}$ with input set $\subIN$
induces a model ${\model=\braces{P_y|y\in\subIN}}$. 
Analyzing data generated by another algorithm
$\AlgInf{\infty}'$ using $\model$ thus constitutes a selection bias. 
Such a selection bias results in misspecification if 
${\AlgInf{\infty}'(y)\not\in\mathcal{P}}$.

Explicitly considering sampling makes the nature of graphon misspecification more precise:
As noted above, \cref{alg:uvertex:simple} induces the model ${\mathcal{P}=\braces{\Law(X_w)|w\text{ graphon}}}$.
Explaining data by a graphon model hence implicitly assumes the data is an outcome
of \cref{alg:uvertex:simple}. One might
argue, for example, that the distinction between sparse and dense is of limited relevance for small graphs,
and that a small sample can hence be fit without concern using a graphon model. 
\cref{fig:misspecification:1} illustrates the implicit sampling assumption assumption can have
drastic consequences, even for small samples.

Consider a sample ${X_k:=\AlgInf{k}(y)}$ generated by the limit of \cref{alg:uvertex:simple}
from an infinite input graph $y$.
If $X_k$ contains a subgraph $x_j$ of size ${j\leq k}$,
$y$ must contain infinitely
many copies of $x_j$. There are $\binom{k}{j}$ possible subgraphs of size $k$ in $X_k$.
If $x_j$ occurs $m$ times in $X_k$, a graphon model assumes a proportion
${m/\binom{k}{j}}$ of all subgraphs of size $j$ in $y$ match $x_j$.
A consequence of the rapid growth of $\binom{k}{j}$ is:
\begin{itemize}
\item Small observed patterns are assigned much higher probability than larger ones.
\end{itemize}
For example, if a sample $X_{20}$ contains a subgraph $x_k$ exactly once, a graphon
reconstruction of $X_{20}$ contains $x_k$ with probability ${1/1140}$ if ${j=3}$,
compared to ${1/38760}$ if ${j=6}$.

A single edge in $X_k$ is a subgraph of size two. An \emph{isolated} edge, however, is
a subgraph of size $k$: One edge is present, all other edges between its terminal vertices
and the remaining graph are absent. A further consequence of the above is hence:
\begin{itemize}
\item Graphon models tend not to reproduce (partially) isolated subgraphs.
\end{itemize}
That is illustrated by \cref{fig:misspecification:2}, which compares a protein-protein 
interaction network to its reconstruction from a graphon model. The semi-isolated chains
in the input graph, for example, are not visible in the reconstruction; they are present, 
but not isolated.

\begin{figure}
\begin{center}
  \begin{tikzpicture}
    \path[use as bounding box] (-1,-1.7) rectangle (7,2);
    \node at (0,0) {\includegraphics[width=4cm,angle=-20]{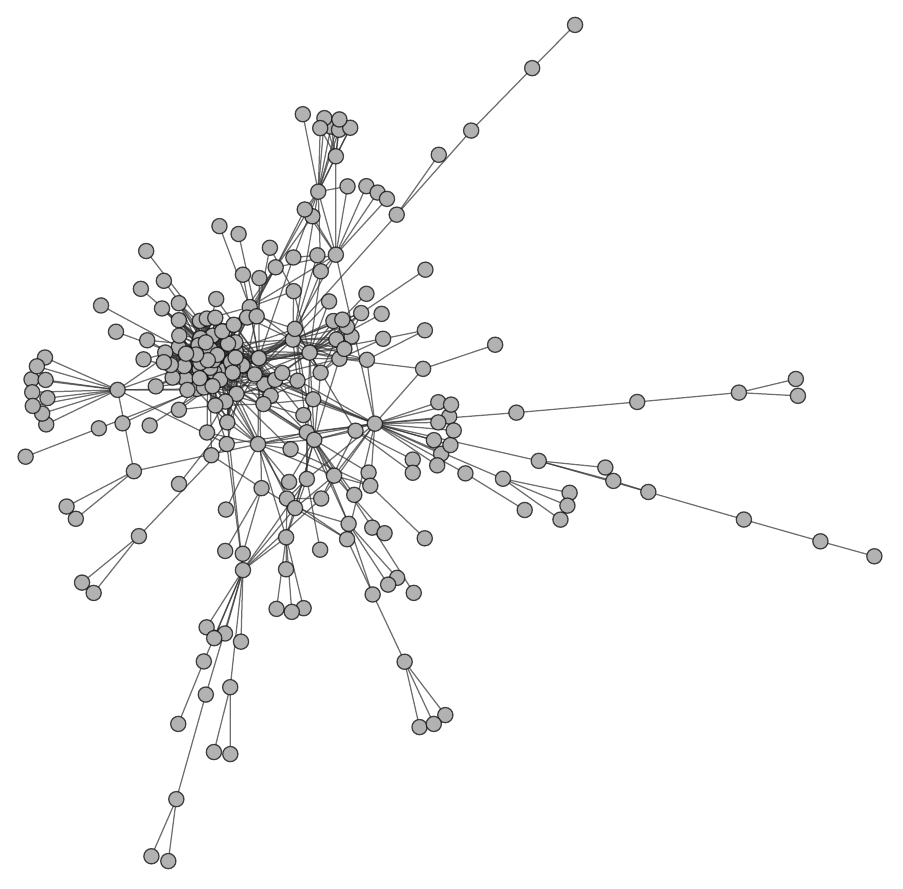}};
    \node[rotate=180] at (6,0) {\includegraphics[width=4cm,angle=90]{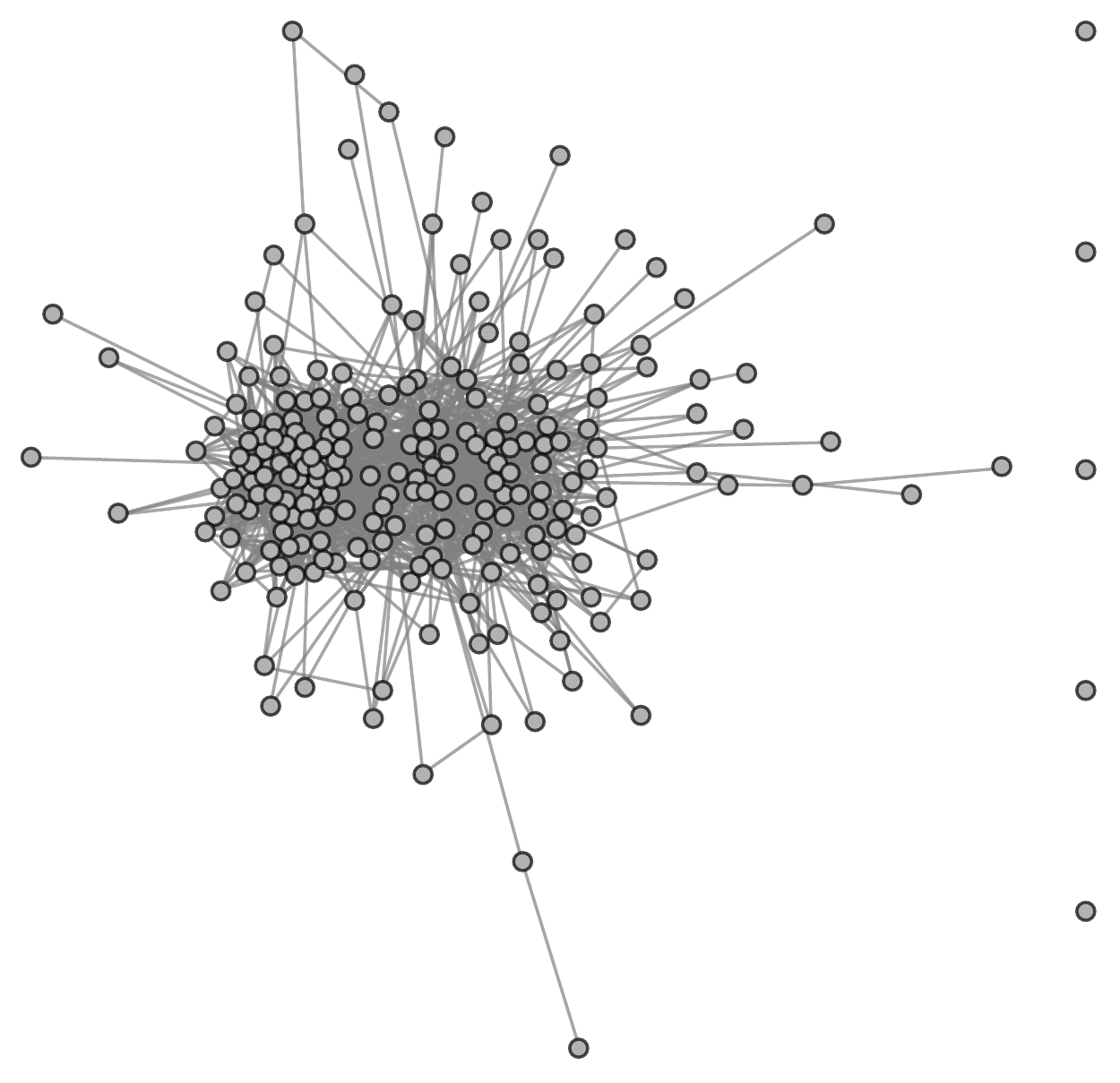}};
  \end{tikzpicture}
\end{center}
\caption{A protein-protein interaction network (left) and its reconstruction
  (a sample with the same number of vertices) from a graphon model (right).
}
\label{fig:misspecification:2}
\end{figure}

\subsection{Sparsified graphon models}

To address denseness, ``sparsified'' graphon models have
been proposed as a remedy, and recent work in mathematical statistics frequently 
invokes these random graphs as network models.
They are equivalent to random graphs originally introduced in \citep{Bollobas:Janson:Riordan:2007},
and are defined as follows:
Fix a graphon $w$ and a monotonically decreasing function ${\rho:\bbN\rightarrow[0,1]}$.
A graph of size $k$ is then generated as
\begin{equation*}
  \bigl(\mathbb{I}\braces{U_{ij}<\rho(k)w(U_i,U_j)}\bigr)_{i<j\leq k}
  \qquad\text{ where }(U_i)_{i\leq k}\text{ and }(U_{ij})_{i,j\leq k}\text{ are i.i.d. }\text{Uniform}[0,1]\;.
\end{equation*}
A graph of this form can equivalently be generated by generating $k$ vertices of the
graph $X_w$ defined by the graphon, followed \iid bond percolation, where each present
edge is deleted independently with probability $1-\rho(k)$. This model is hence generated
by the following sampling algorithm: 
\begin{algorithm}
  \label{alg:uvertex:BJR}
  \begin{tabular}{rl}
    \text{{\small i.})} & \text{Select $k$ vertices of $\textrestk{y}$ 
      independently and uniformly without replacement.}\\
    \text{{\small ii.})} & \text{Extract the induced subgraph ${\Alg{n}{k}(\textrestn{y})}$ of 
      $\textrestn{y}$ on these vertices.}\\
    \text{{\small iii.})} & \text{Label the vertices of ${\Alg{n}{k}(\textrestn{y})}$ 
      by ${1,\ldots,k}$ in order of appearance.}\\
    \text{{\small iv.})} & \text{Delete each edge in ${\Alg{n}{k}(\textrestn{y})}$ 
      independently with probability $p(k)$.}\\
  \end{tabular}
\end{algorithm}
Although the random graphs generated by such models can be made sparse by suitable
choice of $\rho$, this modification clearly does not alleviate the misspecification
problems discussed above. To emphasize:
\begin{center}
  \emph{Sparsified graphon models still assume
    implicitly that the input graph is dense, and explain sparsity by assuming the data
    has been culled after it was sampled. }
\end{center}
Note the application of \cref{alg:uvertex:simple} and the bond percolation step (iv) in 
\cref{alg:uvertex:BJR} cannot be exchanged: One cannot equivalently assume the input graph
has been sparsified first, and then sampled.

\subsection{Graphon models on $\sigma$-finite spaces}
\label{sec:KEGs}

Generating a draw from a graphon model involves a generic sequence of \iid random variables.
The joint law of these variables is a probability measure.
\citet{Caron:Fox:2014:1} have proposed a random graph model that, loosely speaking, 
samples from a $\sigma$-finite measure by substituting a Poisson process, which makes it possible
to generate sparse graphs. \citet{Veitch:Roy:2015:1}, \citet*{Borgs:Chayes:Cohn:Holden:2016:1}
and \citet{Janson:2016:1,Janson:2017:1} have extended this idea to
a generalization of graph limits that can represent certain sparse graphs.
It can be shown that this approach substitutes \cref{alg:uvertex:simple}
by a form of site percolation: 
\begin{algorithm}
  \label{alg:VR}
  \begin{tabular}{rl}
    \text{{\small i.})} & \text{Select each vertex in $\textrestn{y}$ independently,
      with a fixed probability ${p\in[0,1]}$.}\\
    \text{{\small ii.})} & \text{Extract the induced subgraph ${X_{m}}$ of 
      $\textrestn{y}$ on the selected vertices.}\\
    \text{{\small iii.})} & \text{Delete all isolated vertices
      from $X_{m}$, and report the resulting graph.}\\
  \end{tabular}
\end{algorithm}
Note the size of the output graph is now random.
\cref{alg:VR} is derived by \citet{Veitch:Roy:2016:1} (who refer to the site percolation
step as \emph{$p$-sampling})
as the sampling scheme that describes the relation between $\sigma$-finite graphon models at 
different sizes. \citet*{Borgs:Chayes:Cohn:Veitch:2017:1}
show it defines the model class, and characterize the associated topology.

\subsection{Biasing by degree}
\label{sec:deg:bias}

\cref{alg:uvertex:simple} and \ref{alg:uvertex:BJR} fail to resolve sparse input graphs
since they select vertices independently of the edge set.
\cref{alg:VR} circumvents the problem by oversampling---it selects a much
larger number of vertices, and then weeds out insufficiently salient bits.
We compare this to an example that still selects vertices independently, but includes
some information about the edge set, in the form of the vertex degrees:
\begin{algorithm}
  \label{alg:dbiased}
  \begin{tabular}{rl}
    \text{{\small i.})} & \text{Select $n$ vertices of $\textrestn{y}$ 
      independently w/o replacement from the}\\ 
    & \text{degree-biased distribution.}\\
    \text{{\small ii.})} & \text{Extract the induced subgraph ${S_n(\textrestn{y})}$ of 
      $\textrestn{y}$ on these vertices.}\\
    \text{{\small iii.})} & \text{Label the vertices of ${S_n(\textrestn{y})}$ 
      by ${1,\ldots,n}$ in order of appearance.}\\
  \end{tabular}
\end{algorithm}
We observe immediately this algorithm is not idempotent:
Suppose an input graph $y_4$ for ${\Alg{4}{3}}$ is chosen as follows:
\begin{center}
  \begin{tikzpicture}[scale=.8]
    \begin{scope}[xshift=-3.5cm,rotate=0]
      \node[circle,draw,scale=.7,fill=gray!30!white] (v1) at (0,0) {\scriptsize 1};
      \node[circle,draw,scale=.7,fill=gray!30!white] (v2) at (0,-1) {\scriptsize 2};
      \node[circle,draw,scale=.7,fill=gray!30!white] (v3) at (-1,-1) {\scriptsize 3};
      \node[circle,draw,scale=.7,fill=gray!30!white] (v4) at (1,-1) {\scriptsize 4};
      \draw (v1)--(v2); \draw (v2)--(v3); \draw (v2)--(v4);
      \node at (0,-1.65) {$y_4$};
    \end{scope}
    \begin{scope}
      \node[circle,draw,scale=.7,fill=gray!30!white] (v1) at (0,0) {\scriptsize 2};
      \node[circle,draw,scale=.7,fill=gray!30!white] (v2) at (1.44,0) {\scriptsize 1};
      \node[circle,draw,scale=.7,fill=gray!30!white] (v3) at (.72,-1) {\scriptsize 3};
      \draw (v1)--(v2); \draw (v2)--(v3);
      \node at (.72,-1.65) {$x_3$};
    \end{scope}
    \begin{scope}[xshift=2.5cm]
      \node[circle,draw,scale=.7,fill=gray!30!white] (v1) at (0,0) {\scriptsize 2};
      \node[circle,draw,scale=.7,fill=gray!30!white] (v2) at (1.44,0) {\scriptsize 1};
      \node[circle,draw,scale=.7,fill=gray!30!white] (v3) at (.72,-1) {\scriptsize 3};
      \draw (v1)--(v3); \draw (v2)--(v3);
      \node at (.72,-1.65) {$x'_3$};
    \end{scope}
  \end{tikzpicture}
\end{center}
If ${\Alg{4}{3}}$ is defined by \cref{alg:uvertex:simple}, it generates
${x_3}$ and ${x_4}$ with equal probability. Under \cref{alg:dbiased}, the probabilities differ.
See also \cref{fig:dbiased:interactome}. This effect becomes more pronounced in the
input size limit, and permits the limiting algorithm $\AlgInf{\infty}(y)$ to distinguish sparse
from empty input graphs. Consider an input graph $y$ with vertex set $\mathbb{N}$,
defined as follows:
\begin{equation}
  \label{eq:graph:dbiased:resolution}
  \begin{split}
\begin{tikzpicture}
    \begin{scope}[scale=1]
    \node at (-.5,0) {};
    \node[circle,scale=.65,thick,draw,fill=gray!30!white,label=left:{\scriptsize $1$}] (u1) at (0,1) {}; 
    \node[circle,scale=.65,thick,draw,fill=gray!30!white] (v2) at (1,0) {};    
    \node[circle,scale=.65,thick,draw,fill=gray!30!white] (v3) at (2,0) {};    
    \node[circle,scale=.65,thick,draw,fill=gray!30!white] (v4) at (3,0) {};    
    \node[circle,scale=.65,thick,draw,fill=gray!30!white] (v5) at (4,0) {};    
    \node[circle,scale=.5,thick] (v6) at (5,0) {$\cdots$};    
    \node at ($(v2)+(0,-.5)$) {\scriptsize $2$};
    \node at ($(v3)+(0,-0.5)$) {\scriptsize $3$};
    \node at ($(v4)+(0,-0.5)$) {\scriptsize $4$};
    \node at ($(v5)+(0,-0.5)$) {\scriptsize $5$};
    \draw (u1)--(v2);
    \draw (u1)--(v3);
    \draw (u1)--(v4);
    \draw (u1)--(v5);
    \draw[dotted] (u1)--(v6.north west);
    \end{scope}
\end{tikzpicture}
  \end{split}
\end{equation}
In $\textrestn{y}$, vertex $1$ has degree ${m-1}$; all other vertices have degree 1.
Under \cref{alg:uvertex:simple}, the probability
of vertex 1 being selected converges to 0 as ${n\rightarrow\infty}$. Consequently, 
${\AlgInf{\infty}(y)}$ is empty almost 
surely. If \cref{alg:dbiased} is used instead, each step ${\AlgInf{m}(y)}$ selects
vertex $1$ with probability ${\geq 1/2}$ until it is selected, and 
${\AlgInf{\infty}(y)}$ is almost surely connected.
Thus, \cref{alg:dbiased} resolves sparse graphs.

On the other hand, analysis of the algorithm becomes considerably more complicated
than for \cref{alg:uvertex:simple}. It is not clear, for example, which input graphs have
prefix density, although the example of the graph in \eqref{eq:graph:dbiased:resolution} shows
graphs with prefix densities exist.

\begin{figure}
  \begin{center}
        \begin{tikzpicture}[mybraces,font=\sffamily]
      \node at (-5,0) {\includegraphics[width=.3\textwidth]{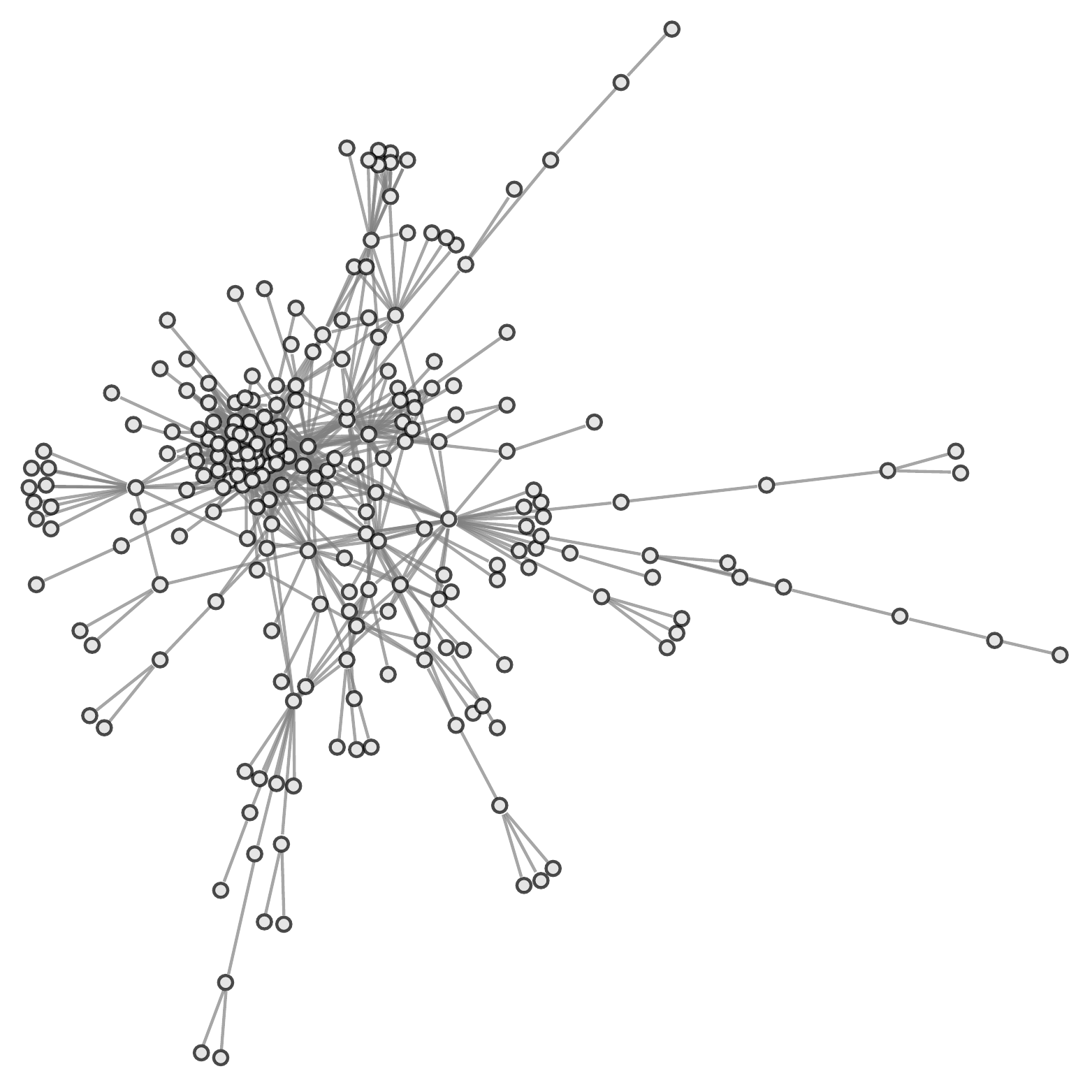}};
      \node at (0,0) {\includegraphics[width=.3\textwidth]{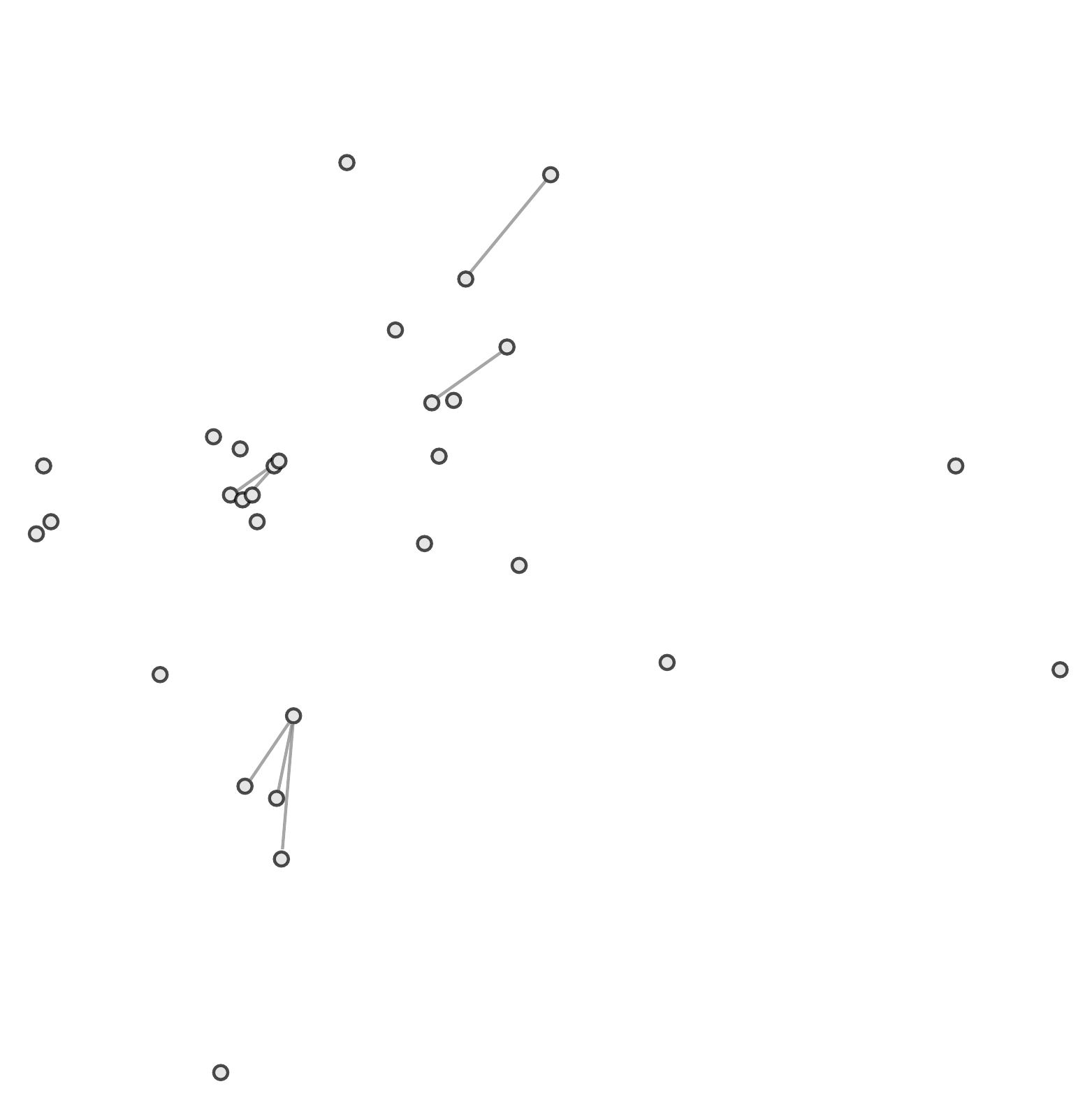}};
      \node at (5,0) {\includegraphics[width=.3\textwidth]{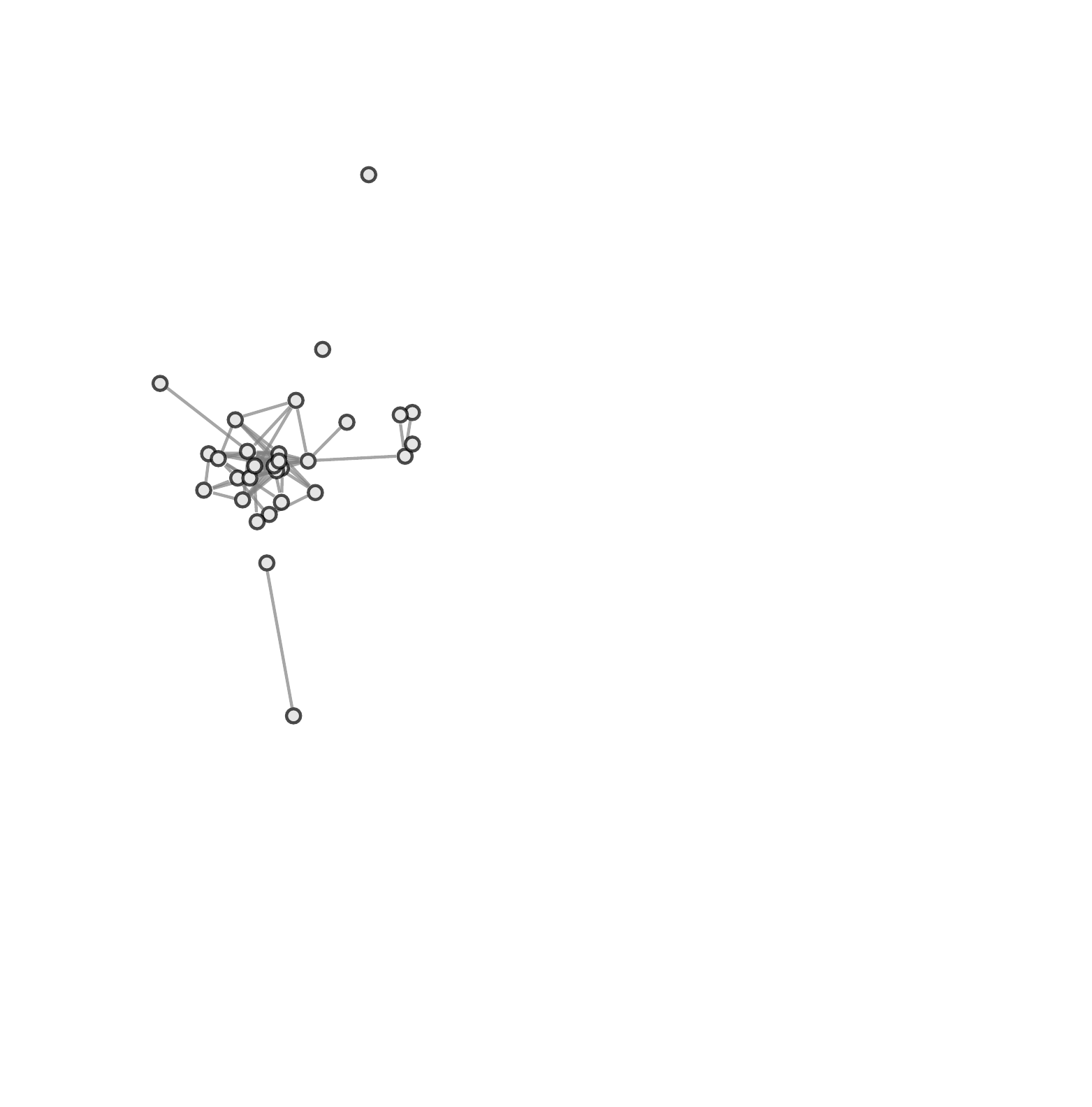}};
      \draw[mirrorbrace] (-7,-2.2)--(-3,-2.2);
      \draw[mirrorbrace] (-2,-2.2)--(2,-2.2);
      \draw[mirrorbrace] (3,-2.2)--(6,-2.2);
      \node at (-5,-2.6) {(i)};
      \node at (0,-2.6) {(ii)};
      \node at (4.5,-2.6) {(iii)};
    \end{tikzpicture}
  \end{center}
  \caption{Biasing by degree. (i) Input graph. (ii) A uniform subsample generated by 
    \cref{alg:uvertex:simple}. (iii) A degree-biased subsample of the same size, generated by \cref{alg:dbiased}.
  }
  \label{fig:dbiased:interactome}
\end{figure}

\subsection{Reporting a shortest path}
\label{sec:shortest:path}

In a sparse input graph, uniformly selected vertices are not connected by an edge
with (limiting) probability 1, but they may be connected by a path of finite length.
A possible way sample vertices independently of the edge \emph{and} to resolve sparse graphs 
is hence to report path length, rather than presence or absence of edges.
Choose ${\subIN\subset\IN}$ as the set of undirected, simple graphs that are connected and have finite
diameter. In other words, these are graphs on $\bbN$ in which any two vertices are connected by a path of
finite length.
\begin{algorithm}
  \label{alg:shortst:path}
  \begin{tabular}{rl}
    \text{{\small i.})} & \text{Select $n$ vertices of $\textrestn{y}$ 
      independently and uniformly w/o replacement.}\\
    \text{{\small ii.})} & \text{Choose ${\Alg{n}{k}(y)}$ as the complete
      graph on $n$ vertices, and mark each edge}\\
    & \text{$(i,j)$ by the length of the shortest
      path between $i$ and $j$ in $\textrestn{y}$.}\\
    \text{{\small iii.})} & \text{Label the vertices of ${\Alg{n}{k}(y)}$ 
      by ${1,\ldots,n}$ in order of appearance.}\\
  \end{tabular}
\end{algorithm}
The example graph in  \eqref{eq:graph:dbiased:resolution} again illustrates this algorithm
can resolve sparse graphs.

In this case, ${\IN\neq\OUT}$, since the output graphs have weighted edges. The algorithm does generate
exchangeable output: If ${\Alg{n}{k}(y)}$ is regarded as a symmetric, ${n\times n}$ adjacency matrix
with values in $\bbN$, its distribution is invariant under joint permutations of rows and columns.
Thus, the output is a jointly exchangeable array. Since ${\IN\neq\OUT}$, the algorithm cannot be represented
as a random permutation, and \cref{result:direct:union:sampler} is not directly applicable. 
One can, however, define a map ${h:\subIN\rightarrow\OUT}$ that takes 
an infinite graph $y$ to a graph $h(y)$ on the same vertex set, with each edge marked by the corresponding
shortest path. Then \cref{alg:shortst:path} is equivalent to application of 
\cref{alg:uvertex:simple} to ${h(y)}$ (where \cref{alg:uvertex:simple} additionally reports edge marks).

\section{Subsampling sequences and partitions}
\label{sec:partitions}

\def\Part{\Pi}
\def\part{\pi}
\def\simpart{\stackrel{\tiny\pi}{\sim}}

If a sampler selects edges rather than vertices of a graph, it produces a sequence of edges.
If this sequence is exchangeable, the output graph properties are closely related to those of 
exchangeable sequences and partitions. It is hence helpful to first consider sequences and partitions from a 
subsampling perspective, even though the results so obtained are known
\citep[see e.g.][]{Pitman:2006,Bertoin:2006}.

\subsection{Exchangeable sequences vs exchangeable partitions}

A partition $\part$ subdivides $\bbN$ into subsets, called blocks, such that each element of $\bbN$
is contained in one and only one block. The blocks of $\pi$ can be ordered uniquely by their
smallest elements, and then numbered consecutively; 
denote the set of ordered partitions of $\bbN$ so defined by $\mathfrak{P}$.
A partition can be identified with the sequence of block labels of its elements, \ie the sequence $x(\pi)$ with
\begin{equation*}
  x_n(\pi) = \text{ number of the block containing $n$ in $\pi$ }\;.
\end{equation*}
Any sequence in $\bbN$ thus defines a partition, though not necessarily with ordered blocks---the
blocks are ordered iff 
\begin{equation}
  \label{eq:ordered}
  x_n\quad\leq\quad|\braces{x_1,\ldots,x_n}|
  \qquad\text{ for all }n\;.
\end{equation}
Two partitions $\pi$ and $\pi'$ are
isomorphic, ${\pi\cong\pi'}$, if they differ only in the enumeration of their blocks.
A sequence representing a possibly unordered partition can be turned into an isomorphic, ordered one
by means of the relabeling map
\begin{equation*}
  r(x):=x^{\ast}\qquad\text{ where }\qquad
  x^{\ast}_n:=\begin{cases}
  x^{\ast}_j & \text{ if }x_n=x_j\text{ for some }j<n\\
  |\braces{x_1^{\ast},\ldots,x_{n-1}^{\ast}}|+1
  & \text{ otherwise }
  \end{cases}\;.
\end{equation*}
Then ${\mathfrak{P}=r(\mathbb{N}^{\infty})=\bbN^{\infty}\!/\!\cong}$.
The permutation group $\FSym$ naturally acts on $\bbN^{\infty}$ as
\begin{equation}
  \label{eq:action:sequences}
  T(\phi,x)=(x_{\phi(1)},x_{\phi(2)},\ldots)\;,
\end{equation}
and on $\mathfrak{P}$ by acting on the underlying set $\bbN$, which defines an action $T'$ as
\begin{equation}
  \label{eq:action:partitions}
  i\text{ in block }j\text{ of }T'(\phi,\pi)
  \qquad\Leftrightarrow\qquad
  \phi(i)\text{ in block }j\text{ of }\pi\;.
\end{equation}
For simplicity, we write ${\phi(\pi)=T'(\phi,\pi)}$
and ${\phi(x)=T(\phi,x)}$.
The two actions correspond as
\begin{equation*}
  x(\phi(\pi))=r(\phi(x(\pi)))\;.
\end{equation*}
In other words, if a sequence $x$ is an ordered representation of ${\pi\in\mathfrak{P}}$, then
${r(\phi(x))}$ is an ordered representation of ${\phi(\pi)}$, and the maps ${x\mapsto r(\phi(x))}$
define an action of $\FSym$ on the subset of sequences in $\bbN^{\infty}$ that satisfy \eqref{eq:ordered}.

A random sequence ${X}$ in $\bbN^{\infty}$ and a random partition $\Part$ in $\mathfrak{P}$ are
respectively called \emph{exchangeable} if ${\phi(X)\equdist X}$ and 
${\phi(\Part)\equdist\Part}$, for all ${\phi\in\FSym}$. Kingman's theory of exchangeable partitions
\citep{Kingman:1978:2} exposes a subtle difference between the two cases: Suppose $X$ is a random sequence
such that ${\pi(X)\in\mathfrak{P}}$ almost surely. Then
\begin{equation*}
  X \text{ exchangeable sequence }
  \quad\Rightarrow\quad
  \pi(X) \text{ exchangeable partition, }
\end{equation*}
but the converse is not true: Since $X$ is exchangeable, de Finetti's theorem implies
there is a random probability measure $\mu$ on $\bbN$ such that ${X_1,X_2,\ldots|\mu\simiid\mu}$.
Hence, every value that occurs in $X$ also reoccurs infinitely often with probability 1, 
and every block of the partition $\pi(X)$ is almost surely of infinite size. Kingman's representation
theorem shows that an exchangeable partition can contain two types of blocks, infinite blocks and singletons.
Aldous' proof of Kingman's theorem \citep[e.g.][]{Bertoin:2006} shows that every exchangeable partition $\Pi$ can be
encoded as an exchangeable sequence $X'$ in ${[0,1]}$: The numbers $i$ and $j$ are in the same
block of $\Pi$ iff ${X'_i=X'_j}$. Again by de Finetti's theorem, there is a random probability
measure $\nu$ on $[0,1]$ such that ${X'_1,X'_1,\ldots|\nu\simiid\nu}$. Atoms of $\nu$ account
for infinite blocks of ${\Pi=\pi(X')}$, whereas the continuous component of $\nu$ generates
singleton blocks. Thus, de Finetti's theorem on $[0,1]$ subsumes Kingman's theorem; de
Finetti's theorem on $\bbN$ does not. The latter fails precisely for those partitions
that contain singleton blocks.

\subsection{Subsampling sequences}

Now consider both cases from a subsampling perspective. To generate exchangeable sequences,
we define an algorithm with input ${y\in\bbN^{\infty}}$ as:
\begin{algorithm}
  \label{alg:sequence:sampling}
  \begin{tabular}{rl}
    \text{{\small i.})} & \text{Select $k$ indices ${J_1,\ldots,J_k}$ in $[n]$ uniformly without replacement.}\\
    \text{{\small ii.})} & \text{Extract the subsequence ${\Alg{n}{k}(y)=(y_{J_1},\ldots,y_{J_n})}$.}\\
  \end{tabular}
\end{algorithm}
The algorithm can be represented as a random transformation, using the action \eqref{eq:action:sequences},
\begin{equation*}
  \Alg{n}{k}(y)=\restk{\Phi_n(y)}\qquad\text{ for }\Phi_n\text{ uniform on }\Sym{m}\;.
\end{equation*}
A sequence $y$ has prefix densities under this algorithm if, for each ${k\in\bbN}$ and every finite
sequence $(x_1,\ldots,x_k)$ in $\bbN$, the scalar
\begin{equation*}
  p(x_1,\ldots,x_k):=
  \lim_{n\rightarrow\infty}
  \frac{|\braces{\phi\in\Sym{n}:\textrestk{\phi(y)}=(x_1,\ldots,x_k)}|}{|\Sym{n}|}
\end{equation*}
exists. Let ${\subIN\subset\bbN^{\infty}}$ be the set of all sequences $y$ satisfying this condition.
Then \cref{result:direct:union:sampler} holds on $\subIN$, which shows that $\AlgInf{\infty}(y)$
is an exchangeable random sequence in $\bbN$. Moreover, the algorithm is idempotent, which implies 
\begin{equation}
  \label{eq:factorization:sequence:densities}
  p(x_1,\ldots,x_k)=p(x_1)\cdots p(x_k)\;.
\end{equation}
By Fatou's lemma, 
\begin{equation*}
  \bar{p}:=\sum_{m\in\bbN|p(m)\text{ exists}}p(m)
  \qquad\text{ satisfies }\qquad
  \bar{p}\;\leq\; 1\;.
\end{equation*}
If $X_{n,1}$ denotes the first entry of the sequence output by the sampler on input of length $n$, 
then ${p(m)=\lim_nP\braces{X_{n,1}=m}}$, and as these events are mutually exclusive for different $m$,
\begin{equation}
  \label{eq:sequence:densities:additive}
  y\in\bbN^{\infty}\text{ has prefix densities }
  \quad\Leftrightarrow\quad
  \bar{p}=1
  \quad\Leftrightarrow\quad
  \sum_{m\in\bbN}p(m)=1\;.
\end{equation}
We note every ergodic exchangeable sequence can be obtained in this way: For ${y\in\subIN}$ fixed,
${\AlgInf{\infty}(y)}$ is ergodic, with de Finetti measure ${\sum_{m\in\bbN}p(m)\delta_m}$. It is not
hard to see that for every choice of scalars ${p(m)}$ with ${\sum_mp(m)=1}$, one can construct
a sequence ${y\in\subIN}$ that yields these scalars as prefix limits. By \cref{theorem:ergodic:decomposition},
all exchangeable sequences can be obtained by randomization,
as ${\AlgInf{\infty}(Y)}$, for some (not necessarily 
exchangeable) random sequence $Y$. 
The factorization \eqref{eq:factorization:sequence:densities}
can be read as a combinatorial explanation of de Finetti's theorem on $\bbN$.

\subsection{Subsampling partitions}

For an input partition ${\pi\in\mathfrak{P}}$, define subsampling as:
\begin{algorithm}
  \label{alg:partition:sampling}
  \begin{tabular}{rl}
    \text{{\small i.})} & \text{Select $k$ indices ${J_1,\ldots,J_k}$ in $[n]$ uniformly without replacement.}\\
    \text{{\small ii.})} & \text{Extract the block labels ${(x_{J_1}(\pi),\ldots,x_{J_k}(\pi))}$.}\\
    \text{{\small iii.})} & 
    \text{Report the ordered sequence ${\Alg{n}{k}(\pi)=r(x_{J_1}(\pi),\ldots,x_{J_k}(\pi))}$.}
  \end{tabular}
\end{algorithm}
The algorithm can again be represented as a random transformation,
\begin{equation*}
  \Alg{n}{k}(\pi):=\restk{\Phi_n(\pi)}=\restk{r(\Phi_n(x(\pi)))}\qquad\text{ for }\Phi_n\sim\text{Uniform}(\Sym{n})\;.
\end{equation*}
The second identity shows that it can be reduced to \cref{alg:sequence:sampling}: 
Transform $\pi$ to the sequence $x(\pi)$, apply \cref{alg:sequence:sampling}, and 
reorder the output sequence.

If the input partition $\pi$ is regarded as a sequence ${y=x(\pi)}$, its prefix densities
under \cref{alg:partition:sampling} clearly exist if they do under \cref{alg:sequence:sampling},
and hence if \eqref{eq:sequence:densities:additive} holds. That condition is sufficient, not necessary:
Suppose for every element ${m\in\mathbb{N}}$, the limit $p(m)$ as above exists for $x(\pi)$,
and let ${N_0(\pi)\subset\bbN}$ be the set of all $m$ with ${p(m)=0}$. Require that
\begin{equation*}
  p_0:=\lim_{n\rightarrow\infty}
  \frac{|\braces{j\leq n: x_j(\pi)\in N_0(\pi)}|}{n}\quad\text{ exists, hence }
  p_0+\bar{p}=p_0+\sum_mp(m)=1\;.
\end{equation*}
If ${p_0>0}$, the sequence $x(\pi)$ does not have prefix densities
under \cref{alg:sequence:sampling}, since ${\bar{p}<1}$.
In contrast, \cref{alg:partition:sampling} has prefix densities even for ${p_0>0}$,
since it does not resolve differences between elements of $N_0(\pi)$.

To make this more precise from the algorithmic perspective, assume 
\cref{alg:sequence:sampling} is applied to the sequence representation $x(\pi)$ of $\pi$,
and similarly regard the partition output by \cref{alg:partition:sampling} as a sequence.
Fix an index $i\leq k$, and suppose the $i$th entry of the sequence generated 
in step (ii) of either algorithm takes a value in $N_0(\pi)$. We can distinguish two cases:
A specific value ${m\in N_0(\pi)}$ is reported, or we only report whether or not the value is in $N_0(\pi)$.
These correspond to the events
\begin{equation*}
  A_{im}:=\braces{ x_{J_i}(\pi)=m} \quad\text{ for some }m\in N_0(\pi)
  \qquad\text{ and }\qquad
  A_{i0}:=\braces{ x_{J_i}(\pi)\in N_0(\pi)}\;.
\end{equation*}
Under \cref{alg:sequence:sampling}, the events ${A_{im}}$ are
observable, and hence measurable in $\sigma(\Alg{n}{k}(y))$ for every ${i\leq k\leq n}$. 
Since $p_0$ is the probability of $A_{i0}$ under the limiting output distribution
${P_y}$, where ${y=x(\pi)}$, we have
\begin{equation*}
  p_0=P_{x(\pi)}(A_{i0})=\sum_{m\in N_0(\pi)}P_{x(\pi)}(A_{im})=\sum_{m\in N_0(\pi)}p(m)\;.
\end{equation*}
Since ${p(m)=0}$ by assumption and ${N_0(\pi)}$ is countable, that excludes the case ${p_0>0}$ for 
\cref{alg:sequence:sampling}. Under \cref{alg:partition:sampling}, the
events ${A_{im}}$ are \emph{not} measurable, since they are masked by step (iii), and so
${p_0>0}$ does not lead to contradictions.
In summary, the set of sequences which have prefix densities under \cref{alg:sequence:sampling} is
\begin{equation*}
  \subIN=\bigbraces{y\in\bbN^{\infty}\,\vert\, p(m) \text{ exists for all }m\in\bbN\text{ and }\sum_{m}p(m)=1}\;.
\end{equation*}
\cref{alg:partition:sampling} has prefix densities on the strictly larger set 
\begin{equation*}
  \subIN'=\bigbraces{y\in\bbN^{\infty}\,\vert\, p(m) \text{ exists for all }m\in\bbN\text{ and }\sum_{m}p(m)\leq 1}\;,
\end{equation*}
and the input sequences in ${\subIN'\setminus\subIN}$ are precisely those that generate ergodic exchangeable
partitions with singleton blocks. \cref{result:direct:union:sampler} holds on $\subIN'$ for
\cref{alg:partition:sampling}, which in particular implies idempotence of the algorithm, and this
property is indeed used by Kingman: 
Note identity (1.3) in \citep{Kingman:1978:2} is precisely idempotence, if not by the same name.

\section{Selecting edges independently}
\label{sec:sampling:edges}

We next consider undirected input graphs $y$, with vertex set $\bbN$ and infinitely
many edges, represented as sequences ${y=((i_k,j_k)_{k\in\bbN})}$ of edges as described in 
\cref{sec:spaces}. The sequence representation makes this case similar to
the sequences and partitions in the previous section. In analogy to the set $\mathfrak{P}$
of partitions represented by sequences with ordered labels, we define
\begin{equation*}
  \mathcal{G}:=\braces{y\in(\bbN^2)^{\infty}\,\vert\,
    i_k<j_k \text{ for all }k\in\bbN\text{ and }
    (i_1,j_1,i_2,j_2,\ldots) \text{ satisfies \eqref{eq:ordered}}}\;.
\end{equation*}
These are those undirected graphs on $\bbN$ that have infinitely many edges, and whose
vertices are labeled in order of appearance in the sequence. The set contains both
simple graphs (if each edge occurs only once), and multigraphs.
We select edges uniformly by applying a version of \cref{alg:partition:sampling}
to the edge sequence $y$. That requires a suitable adaptation of the relabeling map $r$:
For any finite sequence ${x=((i_1,j_1),\ldots,(i_n,j_n))}$ of edges, define
\begin{equation*}
  r'(x):=\text{swap}(r(x))
  \quad\text{ where }\quad
  \text{swap}\bigl((i_k,j_k)_{k\leq n}\bigr)
  =(\min\braces{i_k,j_k},\max\braces{i_k,j_k})_{k\leq n}\;.
\end{equation*}
In words, apply the relabeling map $r$ for sequences to the sequence of edges (read as a 
sequence of individual vertices, rather than of pairs), and then swap each pair of vertices
if necessary to ensure ${i_k\leq j_k}$. The sampling algorithm is defined as follows:
\begin{algorithm}
  \label{alg:edge:sampling}
  \begin{tabular}{rl}
    \text{{\small i.})} & \text{Select $k$ indices ${J_1,\ldots,J_k}$ in $[n]$ uniformly without replacement.}\\
    \text{{\small ii.})} & \text{Extract the sequence ${((i_{J_1},j_{J_1}),\ldots,(i_{J_n},j_{J_n}))}$.}\\
    \text{{\small iii.})} & \text{Report the relabeled graph ${\Alg{n}{k}(y):=r'((i_{J_1},j_{J_1}),\ldots,(i_{J_k},j_{J_k}))}$.}\\
  \end{tabular}
\end{algorithm}
As in the partition case, the algorithm can be represented by a permutation followed by an application of $r'$:
Define
\begin{equation}
  \label{eq:action:permute:edges}
  T\bigl(\phi,((i_k,j_k)_k)\bigr):=r'((i_{\phi(k)},j_{\phi(k)})_k)
  \qquad\text{ for }\phi\in\FSym\;.
\end{equation}  
Then $T$ is a prefix action of $\FSym$ on the set $\mathcal{G}$, and leaves the set invariant,
${T(\mathcal{G})=\mathcal{G}}$. \cref{alg:edge:sampling} satisfies 
${\Alg{n}{k}(y)\equdist\textrestk{T(\Phi_n,y)}}$ if ${\Phi_n}$ is uniformly distributed on ${\Sym{n}}$.
For a sequence $x_k$ of $k$ edges, labeled in order, the prefix density is
\begin{equation*}
  \hom_{x_k}(y)
  =
  \lim_{n\rightarrow\infty}
  \frac{
    \text{\small\# of $k$-edge subgraphs of $\textrestn{y}$ isomorphic to }x_k
  }{
    \text{\small\# of $k$-edge subgraphs of $\textrestn{y}$}
  }\;,
\end{equation*}
provided the limit exists.

\subsection{Sampling edges of simple graphs}

Suppose first the input graphs are simple: We choose the input set $\subIN$ as
\begin{equation*}
  \subIN:=\braces{y\in\mathcal{G}\,\vert\, (i_k,j_k)\neq(i_l,j_l)\text{ whenever }k\neq l}\;.
\end{equation*}
Since $T$ does not change the multiplicities of edges, $\subIN$ is a measurable, $T$-invariant
subset of $\mathcal{G}$, and $T$ restricted to $\subIN$ is again a prefix action of $\FSym$.
Define the \kword{limiting relative degree} of vertex $i$ as
\begin{equation*}
  \rdeg(i,y)
  \quad:=\quad
  \lim_{n\rightarrow\infty}\quad
  \frac{1}{2n}\deg(i,\textrestn{y})\;.
\end{equation*}
If ${\rdeg(i,y)=0}$ for all vertices $i$ in $y$---for example, if all degrees are finite---the 
limiting probability of any vertex reoccuring in ${\AlgInf{\infty}(y)}$ is zero. If so,
sequences of isolated edges occur almost surely, and ${t_{x_k}(y)=1}$ if ${x_k=((1,2),(3,4),\ldots,(2k-1,2k))}$,
or $0$ otherwise.

In analogy to \cref{sec:partitions}, define
\begin{equation*}
  \bar{p}(y) = 
  \sum_{i\in\bbN}\rdeg(i,y)\;.
\end{equation*}
If ${\bar{p}(y)>0}$, there is at least one vertex $i$ with ${\rdeg(i,y)>0}$, and as for partitions,
${\bar{p}(y)\leq 1}$. An infinite simple graph $y$ with ${\bar{p}(y)>0}$ satisfies
\begin{equation*}
  \lim_{n\rightarrow\infty}\quad
  \frac{1}{n}
  \sum_{k\leq n}\mathbb{I}\braces{\rdeg(i_k,y)\rdeg(j_k,y)>0}
  \quad=\quad
  0\;,
\end{equation*}
\ie those edges contained in the induced subgraph on all vertices with positive relative
degree make up only an asymptotically vanishing fraction of all edges in the graph. Roughly speaking,
many more vertices than those with positive relative degree are needed to account for such large
degrees on some vertices.

The limiting probability that step (ii) of the algorithm selects an edge connecting two vertices with 
positive relative degree is hence zero. 
Every edge $(i_k,j_k)$ thus has at least one terminal vertex with ${\rdeg=0}$, and by definition of the
relabeling map $r'$, this implies $j_k$ always corresponds to a vertex with ${\rdeg=0}$. 
It then follows by comparison to \cref{alg:partition:sampling} that the sequence ${(i_1,i_2,\ldots)}$
represents an exchangeable random partition, and all arguments in \cref{sec:partitions} apply,
with ${p(m):=\rdeg(m,y)}$ and ${p_0:=1-\bar{p}(y)}$. By \cref{lemma:main},
the normalized number of edges connected to a vertex converges to $\rdeg$. We have shown:
\begin{corollary}
  If the input graph ${y\in\mathcal{G}}$ of \cref{alg:edge:sampling} is simple, each connected component is
  of $\AlgInf{\infty}(y)$ is either a star or an isolated edge. Let $D_k(i)$ be the number
  of edges in the $i$th-largest star $\AlgInf{k}(y)$, and $D_k(0)$ the number of isolated edges. Then
  \begin{equation}
    D_k(0)\rightarrow 1-\bar{p}(y)
    \qquad\text{ and }\qquad
    D_k(i)\xrightarrow{k\rightarrow\infty} \Delta_i
    \qquad\text{ almost surely}\;,
  \end{equation}
  where $\Delta_i$ is $i$th-largest limiting relative degree of $y$.
\end{corollary}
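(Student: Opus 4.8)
The plan is to reduce the statement to Kingman's theory of exchangeable partitions as developed in \cref{sec:partitions}, using that \cref{alg:edge:sampling} is realized by the prefix action \eqref{eq:action:permute:edges} of $\FSym$ on $\mathcal{G}$, restricted to the $T$-invariant set of simple graphs, on which it is again a prefix action. Throughout I take $y$ in the domain of $\AlgInf{\infty}$, so that the relative degrees $\rdeg(\argdot,y)$ and $\bar p(y)$ exist. I would first dispose of the case $\bar p(y)=0$: then all relative degrees vanish, the limiting probability of re-sampling an edge incident to any fixed vertex is $0$, and so $\AlgInf{\infty}(y)$ is almost surely a disjoint union of edges; after normalizing $D_k$ by the number $k$ of edges of $\AlgInf{k}(y)$ this yields $D_k(0)\to 1=1-\bar p(y)$ and no stars, and the $\Delta_i$ all vanish. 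So assume $\bar p(y)>0$.

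The core step is the identification already prepared in \cref{sec:sampling:edges}: the limiting probability that \cref{alg:edge:sampling} returns an edge both of whose endpoints have positive relative degree is $0$, hence almost surely every edge $(i_k,j_k)$ of $\AlgInf{\infty}(y)$ has $\rdeg(j_k,y)=0$, and --- by comparison with \cref{alg:partition:sampling} --- the sequence of ``centres'' $(i_1,i_2,\dots)$ represents an exchangeable partition of the edge index set (with $k$ and $l$ in the same block iff $i_k=i_l$) whose block frequencies are $p(m):=\rdeg(m,y)$ and whose dust has mass $p_0:=1-\bar p(y)$; in particular $\AlgInf{\infty}(y)$ is the output of a prefix action of $\FSym$, so \cref{result:direct:union:sampler} applies. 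From this I would read off the component structure. A vertex of positive relative degree sits in an infinite block, so it occurs in infinitely many sampled edges; every other vertex appearing in $\AlgInf{\infty}(y)$ --- in particular each $j_k$ and each dust centre --- occurs in exactly one sampled edge, hence is distinct from all centres of positive relative degree. Therefore two edges of $\AlgInf{\infty}(y)$ meet iff they share a centre, and the connected components are precisely the classes of edges with a common centre: an infinite star if that centre has positive relative degree, an isolated edge if it is a dust element. This proves the first assertion.

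For the limiting values I would invoke the law of large numbers for exchangeable partitions --- the case $\group=\FSym$, $\A_k=\Sym{k}$ of \cref{lemma:main}, equivalently the continuity of Kingman's correspondence \citep[Theorem~2.3]{Pitman:2006}. For fixed $m$ with $\rdeg(m,y)>0$ the number of edges of $\AlgInf{k}(y)$ centred at $m$ is the number of indices $\le k$ in the $m$th block of the partition, which divided by $k$ converges almost surely to $p(m)=\rdeg(m,y)$; likewise the number of isolated edges of $\AlgInf{k}(y)$ is the number of dust indices among the first $k$, which divided by $k$ converges almost surely to $p_0=1-\bar p(y)$, giving the claim for $D_k(0)$. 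The step I expect to be the main obstacle is passing from a \emph{fixed} block to the \emph{$i$th-largest} one, since the relative degrees may have ties and may accumulate at $0$ with infinitely many positive values; here I would use that for each $\varepsilon>0$ only finitely many blocks have frequency exceeding $\varepsilon$, apply the law of large numbers uniformly on that finite set, and bound the total contribution of the remaining blocks to the order statistics --- which is exactly what the cited continuity of Kingman's correspondence provides --- to conclude that the $i$th-largest star of $\AlgInf{k}(y)$ is eventually the block carrying the $i$th-largest frequency, whence $D_k(i)\to\Delta_i$ almost surely, with $\Delta_i$ the $i$th-largest value of $\rdeg(\argdot,y)$.
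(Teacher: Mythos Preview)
Your proposal is correct and follows essentially the same route as the paper: the argument preceding the corollary in \cref{sec:sampling:edges} reduces to \cref{alg:partition:sampling} by observing that almost surely each sampled edge has $\rdeg(j_k,y)=0$, so the sequence $(i_1,i_2,\ldots)$ is an exchangeable partition with block frequencies $\rdeg(m,y)$ and dust $1-\bar p(y)$, and then invokes \cref{lemma:main}. Your treatment is in fact more careful than the paper's sketch---you explicitly separate the case $\bar p(y)=0$, spell out why the components are exactly stars and isolated edges, and flag the passage from fixed blocks to order statistics (which the paper leaves implicit in its appeal to the continuity of Kingman's correspondence).
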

Since the algorithm essentially extracts an exchangeable partition from $y$ whose block sizes
correspond to the relative limiting degrees, but no other information, 
a statistic of $y$ can be estimated using \cref{alg:edge:sampling} if and only if it
is a measurable function of the limiting degree sequence of $y$. In conclusion:
\emph{Uniform sampling of edges from large simple graphs is useful if and only if the objective is
  to estimate a property of the degree sequence.}

\subsection{Sampling edges of multigraphs}
\label{sec:edge:exch:multigraphs}

Now suppose the input $y$ is a multigraph, \ie edges may reoccur in the sequence ${y=((i_k,j_k)_k)}$.
The \kword{limiting relative multiplicity} of a multiedge is 
\begin{equation*}
  \rmult(i,j,y):=\lim_{n\rightarrow\infty}
  \frac{1}{n}\sum_{k\leq m}\mathbb{I}\braces{(i,j)=(i_k,j_k)}\;.
\end{equation*}
We assume there exists at least one edge with ${\rmult(i,j,y)>0}$.
In analogy to $\bar{p}(y)$ above, we define a similar quantity in terms of multiplicities as
\begin{equation*}
\bar{\mu}(y):=\sum_{i<j}\rmult(i,j,y)\;.
\end{equation*}
Since we are effectively sampling from a sequence, the discussion in \cref{sec:partitions}
shows we have to distinguish input graphs with ${\bar{p}(y)=1}$ and ${\bar{p}(y)<1}$.
The next few results apply only to the former, \ie we consider the input set
\begin{equation*}
  \subIN:=\braces{y\in\mathcal{G}\,\vert\,\bar{p}(y)=1}\;.
\end{equation*}

\begin{corollary}
  If \cref{alg:edge:sampling} applied to input graph with ${\bar{\mu}(y)=1}$, 
  then ${\AlgInf{\infty}(y)}$ is ``edge-exchangeable'' in the sense of 
  {\rm \citep{Crane:Dempsey:2016:1}} and {\rm \citep{Cai:Campbell:Broderick:2016:1}}, 
  and all edge-exchangeable graphs can 
  be obtained in this manner, with $y$ randomized if the output graph is non-ergodic.
\end{corollary}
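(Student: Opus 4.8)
The plan is to split the statement into its two halves: that $\AlgInf{\infty}(y)$ is always edge-exchangeable when $\bar{\mu}(y)=1$, which follows quickly from \cref{result:direct:union:sampler}, and that conversely every edge-exchangeable graph arises this way, which needs the ergodic decomposition together with an explicit construction of an input graph. For the first half, \cref{alg:edge:sampling} is by construction the random-transformation sampler \eqref{eq:random:transformation:sampling} built from the prefix action $T$ of $\FSym$ on $\mathcal{G}$ from \eqref{eq:action:permute:edges}, with $\Phi_n$ uniform on $\Sym{n}$. Restrict the input to $\{y\in\mathcal{G}:\bar{\mu}(y)=1\}$; this is a $T$-invariant subset of the section's $\subIN$ (indeed $\bar{\mu}(y)=1$ forces $\bar{p}(y)=1$), on which the prefix densities exist, exactly as for sequences with $\bar{p}=1$ in the discussion around \eqref{eq:sequence:densities:additive}. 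Then the conditions \eqref{conditions:prefix:sampler} hold, so \cref{result:direct:union:sampler} gives that each $P_y$ is $\FSym$-invariant under $T$. Since $T_\phi$ acts by permuting the edge sequence and then reimposing the canonical vertex labeling through $r'$, invariance of the law of this canonically labeled edge sequence under all $T_\phi$ is precisely the definition of an edge-exchangeable graph of \citep{Crane:Dempsey:2016:1,Cai:Campbell:Broderick:2016:1}; hence $\AlgInf{\infty}(y)$ is edge-exchangeable.

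For the converse I would treat the ergodic case first. The tool is the Kingman-type representation of edge-exchangeable graphs, the edge analogue of the de Finetti / Kingman dichotomy for sequences versus partitions reviewed in \cref{sec:partitions}: an \emph{ergodic} edge-exchangeable graph with $\bar{\mu}=1$ is, after deleting the asymptotically negligible multiedges of zero limiting frequency, an i.i.d.\ sequence of multiedges drawn from a purely atomic law $\sum_{\ell}w_\ell\,\delta_{e_\ell}$ with $\sum_\ell w_\ell=1$, and the weights $w_\ell$ --- the limiting relative multiplicities --- are invariants of its law. Given such an $X$, I would construct a fixed multigraph $y$ whose edge sequence interleaves copies of the $e_\ell$ in the proportions $w_\ell$, so that each multiedge $e_\ell$ has limiting relative multiplicity $w_\ell$; this is the construction indicated after \eqref{eq:sequence:densities:additive}, now carried out on edges and arranged so that $y\in\mathcal{G}$, i.e.\ with vertices labeled in order of appearance and $i_k<j_k$ throughout. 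Then $\bar{\mu}(y)=1$, the prefix densities exist at $y$, and the sampled edge sequence of $\AlgInf{\infty}(y)$ is i.i.d.\ from $\sum_\ell w_\ell\,\delta_{e_\ell}$; in particular $\AlgInf{\infty}(y)$ is ergodic, with this as its Kingman measure, and by uniqueness of the representation $\AlgInf{\infty}(y)\equdist X$. One can phrase the last step via \cref{result:direct:union:sampler}: there $\mathbf{\hom}(\AlgInf{\infty}(y))\equas\mathbf{\hom}(y)$ and $\mathbf{\hom}$ separates the classes of $\equivIN$, so $P_y$ is the unique ergodic output law with the prescribed prefix densities.

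The general case follows by randomization. Let $X$ be an arbitrary edge-exchangeable graph. Since the family $T(\FSym)$ is countable, hence separable, \cref{theorem:ergodic:decomposition} gives $\Law(X)=\int\nu\,\mu_\xi(d\nu)$ over the ergodic $T$-invariant laws $\nu$. By the ergodic case, each such $\nu$ equals $P_{y(\nu)}$ for a fixed input $y(\nu)$ with $\bar{\mu}(y(\nu))=1$, and the construction can be arranged to depend measurably on $\nu$ (say through the weight sequence, or as a measurable right inverse of $\mathbf{\hom}$ on its range). Putting $Y:=y(\xi)$ then produces a random input with $\Law(\AlgInf{\infty}(Y))=\int P_{y(\nu)}\,\mu_\xi(d\nu)=\Law(X)$.

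The main obstacle is the ergodic converse. Two points there need care: matching the definition of edge-exchangeability in \citep{Crane:Dempsey:2016:1,Cai:Campbell:Broderick:2016:1}, together with its canonical-labeling convention, to $T$-invariance of the edge sequence; and the Kingman-type identification of the ergodic edge-exchangeable laws together with the explicit interleaving construction of an input graph realizing a prescribed atomic weight sequence while respecting ``$y\in\mathcal{G}$'' and ``$i_k<j_k$''. The forward direction, ergodicity via the i.i.d.\ representation, and the measurable selection in the last step should all be routine.
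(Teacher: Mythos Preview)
Your proposal is correct and follows the same underlying route as the paper: reduce to the analysis of exchangeable sequences already carried out for \cref{alg:sequence:sampling}. The paper's own argument is a single sentence---edge-exchangeable graphs are \emph{by definition} in \citep{Crane:Dempsey:2016:1} exchangeable sequences of pairs $i<j$ in $\mathbb{N}$, so both directions are immediate from the discussion surrounding \eqref{eq:sequence:densities:additive}---whereas you unpack this reduction in full (invoking \cref{result:direct:union:sampler} for the forward direction, and redoing the input-construction and ergodic-decomposition steps for the converse, including a measurable-selection argument the paper leaves implicit).
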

That follows immediately from the definition of edge-exchangeable graphs as exchangeable sequences
of pairs ${i<j}$ in $\bbN$ in \citep{Crane:Dempsey:2016:1}, and our analysis of \cref{alg:sequence:sampling}
above. We also conclude from \cref{sec:partitions} that \cref{result:direct:union:sampler} and
\cref{result:estimate:f:of:y} are applicable, hence:
\begin{corollary}
  \label{corollary:sampling:edges:simple}
  On $\subIN$, the action \eqref{eq:action:permute:edges} is a symmetry of \cref{alg:edge:sampling}, and the algorithm
  is idempotent and resolvent on $\subIN$. Moreover:
  \begin{enumerate}
    \renewcommand{\labelenumi}{(\roman{enumi})}
  \item 
  If two input graphs $y$ and $y'$ have identical prefix densities,
  then ${P_y=P_{y'}}$; otherwise, there exists a $T$-invariant Borel set ${A\subset\OUT}$ such that
  ${P_y(A)=1}$ and ${P_{y'}(A)=0}$.
  \item 
  Let ${f}$ be a function in ${\Lone(P_y)}$, and
  $(f_k)$ a sequence of Borel functions ${f_k:\OUT_k\rightarrow\mathbb{R}}$ with
  ${f_k(\textrestk{x})\rightarrow f(x)}$ as ${k\rightarrow\infty}$, for all $x$ outside
  a $P_y$-null set. Then almost surely
  ${\frac{1}{k!}\sum_{\phi\in\Sym{k}}f(T_{\phi}(S_k(y))\rightarrow f(y)}$
  as ${k\rightarrow\infty}$.
  \end{enumerate}
\end{corollary}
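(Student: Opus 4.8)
The plan is to recognize \cref{alg:edge:sampling} as an instance of the sampling‑by‑random‑transformation construction \eqref{eq:random:transformation:sampling} and then to read off the corollary from \cref{result:direct:union:sampler} and \cref{result:estimate:f:of:y}. Concretely, one takes the direct limit group $\FSym=\bigcup_n\Sym{n}$ with $\group_n:=\Sym{n}$, the map $T$ of \eqref{eq:action:permute:edges} as the candidate prefix action of $\FSym$ on $\mathcal{G}$, and the input set $\subIN$ of the corollary. Everything then reduces to verifying the three standing conditions \eqref{conditions:prefix:sampler}; once they hold, \cref{result:direct:union:sampler} yields at once that every $P_y=\Law(\AlgInf{\infty}(y))$ is $\FSym$‑invariant, that $\AlgInf{k}(y)$ is $\Sym{k}$‑invariant, that $S$ is idempotent and resolvent on $\subIN$, that the $P_y$ are pairwise either equal or mutually singular, and --- in particular --- that $T(\FSym)$ is a symmetry of $S$, so that every $P_y$ is $\FSym$‑ergodic.

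To verify \eqref{conditions:prefix:sampler}(i) I would check that $T$ is genuinely a prefix action, i.e.\ that \eqref{eq:affects:only:finite:prefix} holds. A permutation $\phi\in\Sym{n}$ only rearranges the first $n$ edges, and the relabeling map $r'=\text{swap}\circ r$ assigns vertex names in order of first appearance, so $r'$ commutes with truncation to an edge‑prefix; hence $T_n(\phi,\restn{x})=\restn{T(\phi,x)}$ for $\phi\in\Sym{n}$, and with $T(\mathcal{G})=\mathcal{G}$ this gives the prefix‑action property on the almost discrete space $\mathcal{G}$. Condition (ii) is immediate from the definition of \cref{alg:edge:sampling}, which is exactly $\Alg{n}{k}(y)=\restk{T(\Phi_n,y)}$ with $\Phi_n$ uniform on $\Sym{n}$. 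For condition (iii) I would first observe that $\subIN$ is $T$‑invariant: a finite permutation of edges followed by a vertex relabeling changes neither the limiting relative multiplicities nor the limiting relative degrees, so the defining constraint of $\subIN$ is preserved; and existence of prefix densities on $\subIN$ is precisely the content already worked out in \cref{sec:partitions} --- reading $y$ as a sequence of (multi‑)edges reduces \cref{alg:edge:sampling} to \cref{alg:sequence:sampling}/\cref{alg:partition:sampling}, and \eqref{eq:sequence:densities:additive} identifies the set on which prefix densities exist with the stated input set.

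With \eqref{conditions:prefix:sampler} in hand, part (i) of the corollary follows from \cref{result:direct:union:sampler}: if $y\equivIN y'$ then $P_y=P_{y'}$ by resolvence; if $y\not\equivIN y'$ then $\mathbf{\hom}(y)\neq\mathbf{\hom}(y')$ by \cref{result:direct:union:sampler}(iii), so $A:=\mathbf{\hom}^{-1}(\{\mathbf{\hom}(y)\})$ is a Borel set, $T$‑invariant by \cref{result:direct:union:sampler}(i), with $P_y(A)=1$ by \cref{result:direct:union:sampler}(ii) and $P_{y'}(A)=0$. Part (ii) follows from \cref{result:estimate:f:of:y} with $\group_k=\Sym{k}$ (hence $|\group_k|=k!$): passing from $f_k:\OUT_k\to\mathbb{R}$ to $\tilde f_k:=f_k\circ\restk{\argdot}:\OUT\to\mathbb{R}$, the hypothesis gives $\tilde f_k\to f$ $P_y$‑a.s., and the prefix‑action identity \eqref{eq:affects:only:finite:prefix} turns $\tilde f_k(T_\phi(\AlgInf{\infty}(y)))$ into $f_k(T_k(\phi,\AlgInf{k}(y)))$ for $\phi\in\Sym{k}$, so the empirical average of \cref{result:estimate:f:of:y} is exactly the finite‑sample average appearing in part (ii); the remaining hypotheses of \cref{result:estimate:f:of:y} --- $\AlgInf{\infty}(y)$ being $T(\FSym)$‑ergodic (established above), $f\in\Lone(P_y)$, and $f$ constant on $\equivIN$‑classes in the sense of \eqref{eq:constant:on:equivIN} --- are in place, and the random‑input case is the corresponding clause of \cref{result:estimate:f:of:y}.

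The genuinely substantive work sits in \cref{sec:partitions} and in the proofs of \cref{result:direct:union:sampler} and \cref{result:estimate:f:of:y}; in the present corollary the only places requiring care --- and the main obstacle to a fully rigorous write‑up --- are the verification of \eqref{conditions:prefix:sampler}, specifically (a) that the relabeled action $T$ of \eqref{eq:action:permute:edges} satisfies the commutation‑with‑restriction identity \eqref{eq:affects:only:finite:prefix}, which amounts to checking that $r'$ does not ``leak'' information across the truncation boundary, and (b) the precise identification of the input set on which prefix densities exist, together with its $T$‑invariance. Neither is deep, but both are the natural points at which the reduction to the general theory could go wrong.
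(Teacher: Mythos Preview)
Your proposal is correct and matches the paper's approach exactly: the paper's entire argument is the one sentence preceding the corollary, stating that \cref{result:direct:union:sampler} and \cref{result:estimate:f:of:y} apply via the analysis in \cref{sec:partitions}, and you have correctly unpacked what verifying \eqref{conditions:prefix:sampler} entails in this setting. One small caveat worth flagging in a final write-up: \cref{result:direct:union:sampler} gives invariance and mutual singularity but not, by itself, ergodicity of $P_y$ (which is the definition of ``symmetry'' in \cref{sec:sampling:symmetry} and a hypothesis of \cref{result:estimate:f:of:y}); that comes from the de Finetti identification in \cref{sec:partitions}, which you do invoke.
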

Statement (ii) implies in particular that multiplicities converge: If $\nu_k(y)$ denotes the
$k$th-largest limiting relative multiplicity in a graph $y$, then
\begin{equation*}
  \nu_k(\AlgInf{\infty}(y))=\nu_k(y)\qquad\text{ almost surely.}
\end{equation*}
The behavior of \cref{alg:edge:sampling} changes significantly compared to simple input graphs:
${\AlgInf{\infty}(y)}$ discovers an edge location $(i,j)$ in $y$ if and only 
if ${\rmult(i,j,y)>0}$, and a vertex $i$ in $y$ if and only if it is the terminal vertex of such an edge.
Thus, the output distribution is now governed entirely by edge multiplicities, rather than by vertex degrees
as in the simple case. 

\begin{remark}
If one allows more generally input graphs $y$ with ${\bar{\mu}(y)\leq 1}$, the situation becomes
more complicated: Singleton blocks in partitions now correspond to edges with $\rmult=0$. In partitions,
singleton blocks can exist only because they are not individually identifiable (see \cref{sec:partitions}),
but edges with $\rmult=0$ become partly identifiable if they share a terminal vertex with an edge
with positive relative multiplicity. 
The following example shows, however, that there are input graphs with ${\bar{\mu}(y)<1}$ whose 
prefix densities exist: Suppose the input graph $y$ of \cref{alg:edge:sampling} is chosen such that 
${\textrestn{y}}$, for $n$ even, is
\begin{equation}
  \label{eq:graph:dbiased:resolution}
  \begin{split}
\begin{tikzpicture}[mybraces]
    \begin{scope}[scale=1]
      \node at (-4,.5) {$\textrestn{y}\qquad=$};
    \node at (-.5,0) {};
    \node[circle,scale=.65,thick,draw,fill=gray!30!white] (u1) at (0,1) {}; 
    \node[circle,scale=.65,thick,draw,fill=gray!30!white] (u2) at (-1.5,1) {}; 
    \node[circle,scale=.65,thick,draw,fill=gray!30!white] (v2) at (0,0) {};    
    \node[circle,scale=.65,thick,draw,fill=gray!30!white] (v3) at (1,0) {};    
    \node[circle,scale=.5,thick] (v5) at (3,0) {$\cdots$};      
    \node[circle,scale=.65,thick,draw,fill=gray!30!white] (v6) at (5,0) {};    
    \node at ($(v2)+(0,-.5)$) {};
    \node at ($(v3)+(0,-0.5)$) {};
    \node at ($(v4)+(0,-0.5)$) {};
    \node at ($(v5)+(0,-0.5)$) {};
    \node at ($(v6)+(0,-0.5)$) {};
    \draw (u1)--(u2) node[pos=.5,scale=.1,label=above:{$\frac{n}{2}$}] {};
    \draw (u1)--(v2) node[pos=.5,scale=.1,label=left:{\scriptsize $1$}] {};
    \draw (u1)--(v3) node[pos=.4,scale=.1,label=below:{\scriptsize $1$}] {};
    \draw[dotted] (u1)--(v5);
    \draw (u1)--(v6) node[pos=.5,scale=.1,label=above:{\scriptsize $1$}] {};
    \draw[mirrorbrace] ($(v2.center)+(-.1,-.5)$)--($(v6.center)+(.1,-.5)$) 
    node[pos=.5,label=below:{$\frac{n}{2}$ times}] {};
    \end{scope}
\end{tikzpicture}
  \end{split}
\end{equation}
For finite $n$, each draw in step (ii) of the algorithm selects the single ``large'' multiedge with multiplicity
$n/2$ with probability $1/2$, and each of the other edges with probability $\frac{1}{2n}$. Clearly,
the output graph is of the same form---a single edge whose relative multiplicity converges to $1/2$,
and $\sim n/2$ edges with mulitplicity 1.
\end{remark}

\section{Selecting neighborhoods and random rooting}
\label{sec:neighborhoods}

Instead of extracting individual edges or induced subgraphs, one can
draw neighborhoods around given vertices. One such problem, where one extracts
the immediate neighborhoods of multiple vertices, is of great practical interest,
but we do not know its invariance properties. Another, where one extracts a large
neighborhood of a single random vertex, reduces to well-known results, but
turns out be of limited use for ``statistical'' problems.

\subsection{Multiple neighborhoods}

Let $B_k(v)$ denote $k$-neighborhood of vertex $v$ in the graph, \ie 
the ball of radius $k$ around $v$.
\begin{algorithm}
  \label{alg:ego:networks}
  \begin{tabular}{rl}
    \text{{\small i.})} & \text{Select $k$ vertices ${V_1,\ldots,V_k}$ in $\textrestn{y}$ uniformly at random.}\\
    \text{{\small ii.})} & \text{Report the $1$-neighborhoods of these vertices, ${\Alg{n}{k}(y):=\braces{B_1(V_1),\ldots,B_1(V_n)}}$.}\\
  \end{tabular}
\end{algorithm}
The networks literature often refers to the 1-neighborhood as the \emph{ego network} of $v$.
Data consisting of a collection of such ego networks is of considerable practical relevance, 
and it is hence an interesting question what the invariance properties of \cref{alg:random:rooting} 
are; at present, we have no answer. This question is related to a number of open problems; for example,
how many vertices need to be sampled to obtain a good reconstruction of an input graph,
known as the \emph{shotgun assembly problem} \citep{Mossel:Ross:2015:1}.

\subsection{The Benjamini-Schramm algorithm}
\label{sec:benjamini:schramm}

A different strategy is to report not small neighborhoods of many vertices, but a large neighborhood of a single 
vertex; in this case, a number of facts are known. A \kword{rooted graph} is a pair ${x_{\ast}=(x,v)}$, where $x$ is a graph and $v$ a distinguished vertex in $x$, the \kword{root}.
Let $\OUT$ be the space of undirected, connected, rooted graphs with finite vertex degrees on the vertex set $\bbN$.
For ${x_{\ast}\in\OUT}$, define $\textrestk{x_{\ast}}$ as the induced subgraph on those vertices
with distance ${\leq n}$ to the root of $x$; that is, the ball ${B_n(v,x)}$ of radius $n$ in $x$ centered at the
root $v$. Then ${\OUT_n=\textrestk{\OUT}}$ is the set of finite, rooted, undirected,
connected graphs with diameter ${2n+1}$ (rooted at a vertex ``in the middle'').
As $\OUT_n$ is countably infinite, $\OUT$ is procountable but not compact. 
It is separable, and hence almost discrete, by \cref{lemma:procountable}. 
\begin{algorithm}
  \label{alg:random:rooting}
  \begin{tabular}{rl}
    \text{{\small i.})} & \text{Select a root vertex $V$ in $\textrestn{y}$ uniformly at random.}\\
    \text{{\small ii.})} & \text{Report the ball of radius $k$ centered at the root, ${\Alg{n}{k}(y):=B_k(V)}$}\\
  \end{tabular}
\end{algorithm}
\citet{Benjamini:Schramm:2001:1} introduced this algorithm to define a notion of convergence of graphs:
A graph sequence ${(y^{(k)})}$ in $\subIN$ is \emph{locally weak convergent} to $y$ if 
${\AlgInf{k}(y^{(k)})\rightarrow\AlgInf{k}(y)}$
in distribution as ${k\rightarrow\infty}$, for every ${n\in\bbN}$.

\citet{Aldous:Lyons:2007:1} have studied invariance properties of such limits, and their
work provides---from our perspective---a symmetry analysis of \cref{alg:random:rooting}.
An \kword{automorphism} $\phi$ of a graph $x$, with vertex set $\mathbf{V}$ and edge set $\mathbf{E}$, is a 
pair of bijections ${\phi_{\ind{V}}\!:\mathbf{V}\rightarrow\mathbf{V}}$ and 
${\phi_{\ind{E}}\!:\mathbf{E}\rightarrow\mathbf{E}}$
that cohere in the sense that, for each edge ${e\in\mathbf{E}}$, $\phi_{\ind{V}}$ maps terminal vertices of $e$
to the terminal vertices of ${\phi_{\ind{E}}(e)}$.
Let $\Aut(x)$ denote the set of automorphisms of $x$; clearly, it is a group. 
Let $\fam$ be the family of all maps from ${\mathbf{V}\times\braces{i,j\in\bbN|i<j}}$ to itself.
Then ${\Aut(x)\subset\fam}$. For each ${\phi\in\fam}$, define
\begin{equation*}
  t_{\phi}:\OUT\rightarrow\OUT
  \qquad\text{ as }\qquad 
  t_{\phi}(x):=
  \begin{cases}
    \phi(x) & \text{ if }\phi\in\Aut(x)\\
    x & \text{ otherwise }
  \end{cases}
\end{equation*}
Then the set ${\group:=\braces{t_{\phi}|\phi\in\fam}}$ is a group of measurable bijections of $\OUT$.
\citet{Aldous:Lyons:2007:1} call a probability measure on $\OUT_{\ast}$ \kword{involution invariant}
if it is $\mathbf{G}$-invariant. Involution invariance can be understood as a form of stationarity:
Draw a rooted graph $X$ with root $V$ at random. Perform a single step of simple random walk: 
By definition, the root has at least one and at most
finitely many neighbors. Choose one of these neighbors uniformly at random, and shift the root to that
neighbor, which results in a random rooted graph $X'$. Then $X$ is involution invariant if ${X\equdist X'}$.
In other words, in an involution invariant graph, the distribution of neighborhoods of arbitrary diameter
around the current root remains invariant under simple random walk on the graph.
Involution invariance is closely related to the concept of \emph{unimodularity}, which can be formulated
on unrooted graphs as a ``mass-transport principle''; we omit details and refer to 
\citep{Benjamini:Schramm:2001:1,Aldous:Lyons:2007:1}. 
\begin{fact*}[\citet{Aldous:Lyons:2007:1}]
  Let ${\AlgInf{\infty}}$ be defined by \cref{alg:random:rooting}, and $Y$ a random element of $\subIN$. 
  Then ${\AlgInf{\infty}(Y)}$ is $\group$-invariant if and only if the random (rootless) graph
  $Y$ is unimodular.
  For every ${y\in\subIN}$, the output distribution ${P_y=\Law(\AlgInf{\infty}(y))}$ is $\group$-ergodic.
\end{fact*}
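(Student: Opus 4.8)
The plan is to read this Fact as a transcription of the main results of \citet{Aldous:Lyons:2007:1} into the present framework, so that the proof is a chain of identifications rather than a fresh argument. First I would unwind the limit: by \cref{result:alg:limit}, $\AlgInf{\infty}(y)$ is the distributional limit of $\Alg{n}{k}(\textrestn{y})=B_k(V_n)$, where $\textrestn{y}=B_n(v)$ is the ball of radius $n$ about the root $v$ of $y$ and $V_n$ is uniform on its vertex set. Since $y$ is connected, the balls $B_n(v)$ exhaust $y$, so $P_y=\Law(\AlgInf{\infty}(y))$ is exactly the Benjamini--Schramm (local weak) limit of the finite rooted graphs $\textrestn{y}$, and the prefix densities $\hom_{x_k}(y)$ are the limiting empirical frequencies of the isomorphism types of $k$-balls in $\textrestn{y}$. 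This places us squarely in the setting of \citep{Benjamini:Schramm:2001:1,Aldous:Lyons:2007:1}.

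For the first assertion I would identify $\group$-invariance of the rooted random network $\AlgInf{\infty}(Y)$ with involution invariance and with unimodularity of the rootless graph $Y$. The group $\group=\braces{t_\phi}$ is built (following Aldous and Lyons) precisely so that requiring the law to be unchanged by every partial relabelling that happens to be an automorphism of the sampled graph is equivalent to the mass-transport principle, equivalently to invariance under shifting the root to a uniformly chosen neighbour. I would therefore quote the Aldous--Lyons theorem tying together involution invariance, $\group$-invariance, and unimodularity, together with their correspondence between unimodular rooted and unimodular rootless random networks; applied with the random input $Y$, this yields the stated biconditional.

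For the ergodicity statement I would again defer to \citet{Aldous:Lyons:2007:1} rather than reprove it. Taking $Y=y$ deterministic, the first part already gives that $y$ is (vacuously) unimodular and that $P_y$ is $\group$-invariant, so only triviality of $P_y$ on the almost-invariant $\sigma$-algebra $\bsigma(\group)$ remains to be shown. This is the statement that the local weak limit of the deterministic exhaustion $(\textrestn{y})_n$ is an extreme point of the convex set $\INV(\group)$ of involution-invariant measures; I would obtain it from the Aldous--Lyons characterisation of these extreme points, equivalently from their ergodic decomposition for unimodular networks, and note only that $\sigma(\group)\subset\bsigma(\group)$, so that triviality on $\bsigma(\group)$ is exactly what is required for $\group$-ergodicity.

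The main obstacle I anticipate is matching formalisms rather than a single hard estimate. One has to reconcile the apparatus of this paper---the transformation family $\group=\braces{t_\phi}$, the $\sigma$-algebras $\sigma(\group)$ and $\bsigma(\group)$, and separability of $\group$ so that \cref{theorem:ergodic:decomposition} is available---with the Markov-operator and mass-transport language used by Aldous and Lyons, and in particular to verify that the ergodic decomposition furnished by \cref{theorem:ergodic:decomposition} for the action $T$ coincides with the unimodular ergodic decomposition. Granting that, the invariance equivalence is a bookkeeping check against their definitions; the step that genuinely carries weight---and that I would isolate as a precisely cited Aldous--Lyons result rather than reprove from this paper's machinery---is the extremality, hence ergodicity, of local weak limits produced by a single deterministic graph.
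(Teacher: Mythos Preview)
The paper does not prove this statement at all: it is presented as an unproved \emph{Fact} attributed to \citet{Aldous:Lyons:2007:1}, with no accompanying argument. Your proposal---to treat it as a transcription of Aldous--Lyons into the present language, identify $P_y$ with a Benjamini--Schramm limit, match $\group$-invariance with involution invariance and unimodularity, and defer the substantive ergodicity claim to their paper---is therefore exactly the paper's own stance, only spelled out more explicitly.

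Two small cautions about the details you added. First, calling a deterministic $y$ ``vacuously unimodular'' is misleading: unimodularity of the rootless graph is not vacuous for a fixed $y$; what is true (and easy) is that any local weak limit of finite graphs is involution invariant, which is what gives $\group$-invariance of $P_y$. Second, you correctly flag the ergodicity of $P_y$ as the step with real content and propose to cite it from Aldous--Lyons; be aware that the precise statement you need---that the local weak limit along the exhaustion $(\textrestn{y})_n$ of a \emph{single} graph is extremal among involution-invariant measures---is not a general triviality and depends on the input set $\subIN$ being restricted to graphs for which the prefix densities exist. Locating exactly which result in \citep{Aldous:Lyons:2007:1} delivers this is the one place where your ``chain of identifications'' requires care.
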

\noindent Whether all $\group$-invariant measures can be obtained as $P_y$ for some fixed ${y\in\subIN}$, or more
generally as weak limits ${P=\lim_k P_{y^{(k)}}}$ for some sequence ${(y^{k})}$ in ${\subIN}$, is an open problem
\citep{Aldous:Lyons:2007:1}.

\section*{Acknowledgments}
This work has greatly benefited from discussions with
Nate Ackerman, Morgane Austern, Benjamin Bloem-Reddy, Christian Borgs, Jennifer Chayes, 
Cameron Freer (who also suggested the term \emph{idempotent} and pointed
me to \citep{Vershik:2004:1}), Svante Janson,
Daniel M.\ Roy, and Victor Veitch. It has been supported by grant FA9550-15-1-0074 of AFOSR.

\newpage
\appendix

\section{Technical Addenda}
\label{app:addenda}

\subsection{Almost discrete spaces (\cref{sec:spaces})}
\label{app:spaces}

To check whether a procountable space is almost discrete, one has to verify separability. The
combinatorial character of these spaces makes it possible to formulate conditions in terms of the
elements of $\OUT_n$: One is by verifying each finite structure ${x_n\in\OUT_n}$ can be extended to an element
of $\OUT$ in some canonical way, which is the case if there is a map
\begin{equation}
  \label{eq:embedding:map}
  \tau:\cup_{n}\OUT_n\rightarrow\OUT 
  \qquad\text{ with }\qquad
  \textrestn{\tau(x_n)}=x_n \text{ for all }
  n\in\mathbb{N}
  \text{ and }
  x_n\in\OUT_n\;.
\end{equation}
For example, $\tau$ may pad a finite graph $x_n$ with isolated vertices (if the graphs in $\OUT$ are
not required to be connected). 
\begin{lemma}
  \label{lemma:procountable}
  (i) A procountable space is compact if and only if it is profinite. If so, it is almost discrete.
  (ii) Any procountable space admitting a map $\tau$ as in \eqref{eq:embedding:map} is almost discrete.
\end{lemma}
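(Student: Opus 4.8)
The plan is to phrase everything through the prefix metric $d$, which metrizes the procountable topology on $\OUT$ and is complete. The load-bearing observation is that, since $d$ is an ultrametric, the closed $d$-ball of radius $2^{-n}$ about a point $x$ equals $\braces{x'\in\OUT\mid\textrestn{x'}=\textrestn{x}}$; hence, for each fixed $n$, these balls form a partition of $\OUT$ indexed by $\OUT_n$, and they coincide with the basic open sets of the procountable topology at level $n$. First I would record that every such ball is nonempty --- equivalently, that $\textrestn{\argdot}:\OUT\to\OUT_n$ is surjective: starting from $x_n\in\OUT_n$, surjectivity of the restriction maps \eqref{eq:restriction} yields $x_{n+1}\succeq x_n$, then $x_{n+2}\succeq x_{n+1}$, and so on, and together with $x_m:=\rest[m]{x_n}$ for $m<n$ this produces an element of $\OUT$ whose level-$n$ restriction is $x_n$. (In the setting of (ii) one may simply use $\tau(x_n)$ in place of this construction.)

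For part (i), I would argue that, by the previous paragraph, the $2^{-n}$-balls covering $\OUT$ are in bijection with $\OUT_n$, so $\OUT$ is covered by finitely many $2^{-n}$-balls for every $n$ --- that is, $d$ is totally bounded --- if and only if every $\OUT_n$ is finite, i.e.\ $\OUT$ is profinite. Since $(\OUT,d)$ is complete, total boundedness is equivalent to compactness, which gives the asserted equivalence ``compact iff profinite''. For the clause ``if so, it is almost discrete'' I would use that a totally bounded metric space is separable: picking one point from each of the finitely many $2^{-n}$-balls and taking the union over $n$ gives a countable dense set. Hence a profinite $\OUT$ is separable, and therefore almost discrete. (The empty space is vacuously profinite, compact, and almost discrete, so that degenerate case is harmless.)

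For part (ii), it suffices to check that $D:=\braces{\tau(x_n)\mid n\in\bbN,\ x_n\in\OUT_n}$ is a countable dense subset of $\OUT$. Countability is immediate, as $\bigcup_n\OUT_n$ is a countable union of countable sets. Density follows because, for any $x\in\OUT$ and any $n$, the point $\tau(\textrestn{x})\in D$ satisfies $\textrestn{\tau(\textrestn{x})}=\textrestn{x}$ and hence lies within $2^{-n}$ of $x$. So $\OUT$ is separable, and being procountable it is Polish, i.e.\ almost discrete.

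I do not expect a genuine obstacle here: the one point that needs care is the identification of the metric balls with the basic open sets together with their nonemptiness, and this is exactly where the structural hypotheses enter --- surjectivity of the restriction maps for part (i), the existence of $\tau$ for part (ii). Once that is in place, both parts reduce to standard facts about complete, totally bounded metric spaces; the only mild bookkeeping is to keep the two uses of ``finite'' apart (finiteness of each level $\OUT_n$ versus finiteness of covers), which is precisely the content of the total-boundedness characterization.
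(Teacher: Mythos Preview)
Your argument is correct. For part (ii) it is essentially the paper's own proof: both observe that $\tau(\textrestn{x})$ lies in the $2^{-n}$-ball around $x$, so $\tau(\bigcup_n\OUT_n)$ is countable and dense.

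For part (i) you take a genuinely different route. The paper dispatches (i) in one line by invoking the categorical fact that an inverse limit of discrete spaces is compact iff every factor is compact (hence finite), and then quotes ``compact metrizable $\Rightarrow$ Polish'' for separability. You instead work directly with the prefix metric: you first establish that $\textrestn{\argdot}:\OUT\to\OUT_n$ is surjective (via the surjectivity of the bonding maps in \eqref{eq:restriction}), so that the closed $2^{-n}$-balls partition $\OUT$ and are in bijection with $\OUT_n$; then total boundedness of $(\OUT,d)$ is equivalent to finiteness of every $\OUT_n$, and completeness upgrades this to compactness. Your approach is more elementary and entirely self-contained---it never leaves the metric space $(\OUT,d)$ and makes the role of the ultrametric structure explicit---whereas the paper's approach is terser but relies on the reader supplying the inverse-limit machinery (including, implicitly, the same surjectivity of the projections that you spell out). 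Both arrive at separability the same way in the end, by picking one point per ball.
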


\begin{proof}
(i) Inverse limits of topological spaces are compact iff each factor is compact.
Compact metrizable spaces are Polish. (ii) By definition of the metric $d$, the preimage
of $\textrestn{x}$ under the map $\textrestn{\argdot}$ is the $d$-ball $B_n(x)$
of radius $2^{-n}$ at $x$, so ${\tau(\textrestn{x})\in B_n(x)}$.
Since the metric balls form a base of the topology, the countable set ${\tau(\cup_n\OUT_n)}$ is dense in $\OUT$.
\end{proof}

  Discrete spaces and continua can be distinguished by their abundance of sets that are clopen
  (simultanuously closed and open): In a discrete space, each subset is clopen; in Euclidean space,
  no proper subset is clopen. An almost discrete spaces can contain sets that are not clopen, but clopen sets 
  determine all toplogical properties: There is a clopen base of the topology (the metric balls), and the
  space is hence \emph{zero-dimensional} in the parlance of descriptive set theory.
  Informally, an almost discrete space is as close
  to being discrete as an uncountable space can be if it is Polish.
  The following lemma adapts standard results on weak convergence and inverse limits to this setting:
\begin{lemma}
  \label{lemma:procountable}
  Let $\OUT$ be almost discrete. (i) A
  sequence $(P_i)$ of probability measures on $\OUT$ converges weakly to a probability measure $P$ if
  and only if ${P_i(B)\rightarrow P(B)}$ for all metric balls $B$ of finite radius.
  (ii) Let $\OUT'$ be a topological subspace of $\OUT$, 
  and $P_n$ a probability measure on $\OUT_n$ for each ${n\in\mathbb{N}}$.
  Then there exists a probability measure $P$ on $\OUT'$ that satisfies ${\textrestn{P}=P_n}$ for 
  all $n$ if an only if, for all $n$,
  \begin{equation*}
      \textrestn{P_{n+1}}=P_n 
      \qquad\text{ and }\qquad
      \text{if }P_n(x_n)>0\text{ then }\textrestn{x}=x_n\text{ for some }x\in\OUT'\;.
  \end{equation*}
  If $P$ exists, it is tight on $\OUT'$.
\end{lemma}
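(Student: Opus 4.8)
The plan splits along the two parts, and throughout I would use the ultrametric structure of $\OUT$: for $x\in\OUT$ and $n\in\mathbb{N}$ the $d$-ball $B_n(x)=\{x'\in\OUT:\textrestn{x'}=\textrestn{x}\}$ of radius $2^{-n}$ is clopen, any two such balls are disjoint or nested, and these balls form a countable base of the topology. For part (i), ``only if'' is immediate: if $P_i\weakly P$, then each clopen $B$ has empty boundary, hence is a $P$-continuity set, so $P_i(B)\to P(B)$ by the portmanteau theorem. For ``if'' I would verify the portmanteau criterion $\liminf_i P_i(G)\ge P(G)$ for every open $G$. Writing $G$ as a \emph{countable disjoint} union of balls $G=\bigsqcup_j B_j$ — possible by second countability together with the disjoint/nested dichotomy — gives $P(G)=\sum_j P(B_j)$, while for each finite $m$ one has $\liminf_i P_i(G)\ge\lim_i\sum_{j\le m}P_i(B_j)=\sum_{j\le m}P(B_j)$; letting $m\to\infty$ yields the inequality, and since $P$ is a probability measure this is weak convergence.

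For part (ii), necessity is routine: $\textrestn{P_{n+1}}=P_n$ follows from $\textrest[n+1]{P}=P_{n+1}$ and the coherence identity \eqref{eq:restriction}; and if $P_n(x_n)>0$ then $P_n(x_n)=P(B_n(x_n)\cap\OUT')>0$, which forces $B_n(x_n)\cap\OUT'\ne\varnothing$, i.e.\ some $x\in\OUT'$ restricts to $x_n$.

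For sufficiency I would first forget $\OUT'$. Since each $\OUT_n$ is countable (hence standard Borel) and the restriction maps are measurable, the consistent family $(P_n)$ determines a unique Borel probability measure $P$ on $\OUT$ with $\textrestn{P}=P_n$ for all $n$ — the representation of measures by finite-dimensional distributions recalled in \cref{sec:spaces}. It then remains to show $P$ is carried by $\OUT'$ and is tight there, which I would do with one Prokhorov-type construction. Let $S_n:=\{x_n\in\OUT_n:P_n(x_n)>0\}$; each $P_n$ is purely atomic, so $S_n$ is countable with $P_n(S_n)=1$, and the support hypothesis says $S_n\subseteq\textrestn{\OUT'}$. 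Given $\varepsilon>0$, pick a finite $F_n\subseteq S_n$ with $P_n(F_n)>1-\varepsilon 2^{-n}$, put $C_n:=\bigcup_{x_n\in F_n}B_n(x_n)$ (clopen, with $P(C_n)=P_n(F_n)$), and $K:=\bigcap_n C_n$. Then $K$ is closed and totally bounded, since for every $n$ it is covered by the finitely many $2^{-n}$-balls $B_n(x_n)$, $x_n\in F_n$; as $(\OUT,d)$ is complete, $K$ is compact. Moreover $P(K)\ge 1-\sum_n(1-P(C_n))\ge 1-\varepsilon$, and every $x\in K$ satisfies $\textrestn{x}\in F_n\subseteq S_n\subseteq\textrestn{\OUT'}$ for all $n$, so $x\in\OUT'$; hence $K\subseteq\OUT'$. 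Letting $\varepsilon\downarrow 0$ gives $P(\OUT')=1$, so $P$ restricts to a probability measure on $\OUT'$ with the prescribed finite-dimensional distributions, and the compact sets so obtained witness its tightness.

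The step I expect to be the real content is the last inference — that $\textrestn{x}\in\textrestn{\OUT'}$ for all $n$ implies $x\in\OUT'$. This is where the support hypothesis is genuinely used, and it is automatic exactly when $\OUT'$ is closed in $\OUT$, i.e.\ $\OUT'=\bigcap_n\{x\in\OUT:\textrestn{x}\in\textrestn{\OUT'}\}$ (equivalently, $\OUT'$ is itself procountable); so the structure of the subspace must enter precisely here. Everything else reduces to bookkeeping with the portmanteau theorem and the standard inverse-limit construction.
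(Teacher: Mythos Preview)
The paper does not prove this lemma; it is stated as an adaptation of ``standard results on weak convergence and inverse limits'' and left without argument. So there is nothing to compare against, and your proposal has to be judged on its own merits.

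Part (i) is correct and is the natural argument: clopen balls are continuity sets, and the disjoint/nested dichotomy of ultrametric balls lets you decompose every open set as a countable \emph{disjoint} union of basic balls, after which the portmanteau inequality follows by truncation. Part (ii), necessity, is also fine.

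For sufficiency in (ii) your Prokhorov construction is the right idea and is executed correctly, and you have put your finger on the genuine issue. The inference ``$\textrestn{x}\in\textrestn{\OUT'}$ for all $n$ $\Rightarrow$ $x\in\OUT'$'' is \emph{equivalent} to $\OUT'$ being closed in $\OUT$, and without that hypothesis the statement as written is false. Take $\OUT=\{0,1\}^{\infty}$, let $\OUT'$ be the countable dense set of eventually-zero sequences, and let $P_n$ be uniform on $\{0,1\}^n$. Both conditions in the lemma hold---every finite binary string extends to an element of $\OUT'$ by padding with zeros---yet any probability $P$ on $\OUT'$ with these marginals would satisfy $P(\{x\})\le 2^{-n}$ for every $n$, hence $P(\{x\})=0$ for each $x\in\OUT'$; since $\OUT'$ is countable, $P(\OUT')=0$, a contradiction. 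So the closedness assumption you isolate is not a defect of your argument but a missing hypothesis in the stated lemma. In the paper's only use of part (ii), in the proof of \cref{result:alg:limit}, one has $\OUT'=\OUT$ and the issue does not arise.
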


\subsection{Separability of transformation families}

\label{sec:symmetry:addenda}

Whether a given family $\fam$ 
is separable (cf.\ \cref{sec:symmetry}) can be established by a number of criteria.
A general sufficient condition is:
\emph{If ${\fam_0\subset\fam}$ is dense in the topology of point-wise convergence on $\OUT$,
  it is separating} \citep{Farrell:1962:1}.
If $\fam$ is a group action, a sufficient condition can be formulated in terms of the group:
\emph{If $\fam$ is an action of a group $\group$, and if there exists a second-countable, locally 
  compact topology on $\group$ that makes the action measurable, then
  $\fam$ is separable} \citep{Farrell:1962:1}.
Since a second-countable, locally compact group is Polish, each orbit of the action is measurable.
Countable groups have countable orbits, and are trivially separable, but that does not imply
that a separable group action has countable orbits---the infinite symmetric group
(the set of all bijections of $\bbN$) is a counterexample. There is, however, the following remarkable converse,
the Feldman-Moore theorem \citep{Feldman:Moore:1977:1}:
\emph{If $\Pi$ is a partition of a standard Borel space $\OUT$ into countable sets,
  there is a  measurable action of a countable group on $\OUT$ whose orbits
  are the constituent sets of $\Pi$}.
In particular, if each
orbit of ${\fam}$ is countable, then $\fam$ is separable.

\subsection{Left-inversion of prefix densities}

The proof of \cref{result:estimate:f:of:y} establishes that 
${f(\AlgInf{\infty}(y))=f(y)}$ almost surely, by constructing a map
${f':[0,1]^{\infty}\rightarrow\mathbb{R}}$ that factorizes $f$ as ${f=f'\circ\mathbf{\hom}}$ outside
a null set. This map $f'$ depends on the law $P_y$, or more generally on 
$\Law(Y)$ if $y$ is randomized. 
A measure-dependent result suffices for \cref{result:estimate:f:of:y},
but from an analytic perspective, it is interesting to ask whether the statement can be strengthened
to be measure-free. That is possible if $\mathbf{\hom}$ preserves measurability of open sets:
\begin{proposition}
  \label{result:prefix:action:selector}
  Let $S$ be a sampling algorithm defined as in \eqref{eq:random:transformation:sampling} by a prefix action $T$
  on a almost discrete space, and require all prefix densities exist. If $\mathbf{\hom}(A)$ is Borel whenever
  $A$ is open, there exists a Borel map 
  ${\sigma:[0,1]^{\infty}\rightarrow\OUT}$ such that ${\sigma\circ\mathbf{\hom}(y)\equivIN y}$ 
  for all $y$.
\end{proposition}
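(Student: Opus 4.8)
The plan is to read the statement as a Borel-uniformization problem and solve it by a greedy descent through the prefix tree of $\OUT$. By \cref{result:direct:union:sampler}(iii) the fibers of $\mathbf{\hom}$ are exactly the $\equivIN$-classes, so it suffices to produce a Borel map $\sigma:[0,1]^{\infty}\rightarrow\OUT$ with $\mathbf{\hom}\circ\sigma=\mathrm{id}$ on the image $\mathbf{\hom}(\OUT)$: then $\mathbf{\hom}\bigl(\sigma(\mathbf{\hom}(y))\bigr)=\mathbf{\hom}(y)$, hence $\sigma(\mathbf{\hom}(y))\equivIN y$, for every $y$. (Fix once and for all a bijection $\cup_k\OUT_k\cong\bbN$, so that $[0,1]^{\cup_k\OUT_k}=[0,1]^{\infty}$.) Two earlier facts carry the argument: \cref{result:continuous:prefix:densities}, which says $\mathbf{\hom}$ is continuous, and the standing hypothesis that $\mathbf{\hom}$ maps open sets to Borel sets.

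I would use the clopen base of $\OUT$ consisting of the prefix balls $B_n(x_n)=\{x\in\OUT:\textrestn{x}=x_n\}$, with $x_n$ ranging over the countable tree $\cup_n\OUT_n$ (here $x_{n+1}$ is a child of $\textrestn{x_{n+1}}$, and $B_n(x_n)=\bigsqcup\{B_{n+1}(x_{n+1}):\textrestn{x_{n+1}}=x_n\}$). Each $B_n(x_n)$ is open, so $D(x_n):=\mathbf{\hom}(B_n(x_n))$ is Borel by hypothesis, and since images commute with unions, $D(x_n)=\bigcup\{D(x_{n+1}):x_{n+1}\text{ a child of }x_n\}$; in particular $\mathbf{\hom}(\OUT)=\bigcup_{x_1\in\OUT_1}D(x_1)$ is Borel. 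On $\mathbf{\hom}(\OUT)$, define $\sigma$ by descending the tree greedily against a fixed enumeration of each $\OUT_n$: let $\sigma_1(v)$ be the least $x_1$ with $v\in D(x_1)$, and inductively let $\sigma_{n+1}(v)$ be the least child $x_{n+1}$ of $\sigma_n(v)$ with $v\in D(x_{n+1})$ --- such a child exists because $v\in D(\sigma_n(v))=\bigcup_{\text{children}}D(\cdot)$. The $(\sigma_n(v))_n$ form a coherent sequence, hence an element $\sigma(v)$ of $\OUT$; each set $\{v:\sigma_n(v)=x_n\}$ is a finite Boolean combination of the Borel sets $D(\cdot)$, and since the Borel $\sigma$-algebra of $\OUT$ is generated by the maps $\textrestn{\argdot}$, $v\mapsto\sigma(v)$ is Borel. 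Off the Borel set $\mathbf{\hom}(\OUT)$, set $\sigma$ equal to any fixed point of $\OUT$.

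The one step that is not bookkeeping is checking $\mathbf{\hom}(\sigma(v))=v$ for $v\in\mathbf{\hom}(\OUT)$, and here continuity is essential. For each $n$ we have $v\in D(\sigma_n(v))=\mathbf{\hom}(B_n(\sigma_n(v)))$, so pick $y^{(n)}\in B_n(\sigma_n(v))$ with $\mathbf{\hom}(y^{(n)})=v$; then $\textrestn{y^{(n)}}=\sigma_n(v)=\textrestn{\sigma(v)}$, i.e.\ $d(y^{(n)},\sigma(v))\leq 2^{-n}$, so $y^{(n)}\rightarrow\sigma(v)$, and by \cref{result:continuous:prefix:densities}, $v=\mathbf{\hom}(y^{(n)})\rightarrow\mathbf{\hom}(\sigma(v))$, giving $\mathbf{\hom}(\sigma(v))=v$. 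Applying this with $v=\mathbf{\hom}(y)$ and invoking \cref{result:direct:union:sampler}(iii) finishes the proof. I expect the main obstacle to be purely organizational: one must present the greedy construction so that its Borel measurability is transparent --- which is exactly why the hypothesis is stated for all open sets and not merely for $\OUT$, since otherwise one could only extract a universally measurable (Jankov--von Neumann) selection rather than a Borel one.
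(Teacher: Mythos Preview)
Your proof is correct and takes a genuinely different route from the paper's. The paper invokes a black-box selection theorem from descriptive set theory (Kechris, Theorem~12.16): an equivalence relation on a Polish space admits a Borel selector provided each equivalence class is closed and the saturation of every open set is Borel. It then verifies these hypotheses---closedness of fibers from continuity of $\mathbf{\hom}$ (\cref{result:continuous:prefix:densities}), and Borel saturations from ${A^{\ast}=\mathbf{\hom}^{-1}\mathbf{\hom}(A)}$ together with the image hypothesis. By contrast, you construct the selector explicitly by greedy descent through the clopen prefix-ball base of the almost discrete space, using the image hypothesis to keep the level sets $D(x_n)$ Borel and continuity of $\mathbf{\hom}$ to certify ${\mathbf{\hom}(\sigma(v))=v}$ via the limiting sequence $y^{(n)}$. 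Your argument is more elementary and self-contained, and exploits the combinatorial structure of almost discrete spaces directly; the paper's is shorter but relies on a nontrivial external theorem, and also leaves implicit the passage from a selector ${\OUT\rightarrow\OUT}$ to a Borel map out of $[0,1]^{\infty}$, which your construction handles directly. One small remark: your phrase ``finite Boolean combination'' is correct provided each $\OUT_n$ is enumerated in order type $\omega$, so that every element has only finitely many predecessors; you might say this explicitly.
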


\begin{proof}
  Since $\equivIN$ is a partition of a standard Borel space, it follows from the theory of measurable partitions
  that the map $\sigma$ exists if (1) each equivalence class is a closed set and (2) for every open set $A$,
  the $\equivIN$-saturation ${A^{\ast}:=\braces{y\in\OUT|y\equivIN y'\text{ for some }y'\in A}}$
  is Borel \citep[e.g.][Theorem 12.16]{Kechris:1995}.
  By \cref{result:direct:union:sampler}, the equivalence classes of $\equivIN$ are the fibers of $\mathbf{\hom}$.
  Since $\mathbf{\hom}$ is continuous by \cref{result:continuous:prefix:densities}, the equivalence classes are
  closed. Observe that the saturation of any set $A$ is ${A^{\ast}=\mathbf{\hom}^{-1}\mathbf{\hom}(A)}$.
  Thus, if $\mathbf{\hom}(A)$ is measurable, so is $A^{\ast}$.
\end{proof}

\section{Proofs}

\subsection{Existence of a limiting sampler}

We denote be $\pMeas(\IN)$ the space of probability measures on $\IN$ (topologized by weak convergence).
The proof uses a continuity result of \citet{Blackwell:Dubins:1983:1}: If $\IN$ is Polish,
$\pMeas(\IN)$ the space of probability measures on $\OUT$, 
and ${(\U,\lambda)}$ a standard probability space, there exists a 
mapping ${\rho:\pMeas(\IN)\times\U\rightarrow\IN}$ and
a $\lambda$-null set ${N_{\rho}\subset\U}$ such that 
\begin{equation}
  \label{eq:blackwell:dubins}
  \rho(\argdot,u) \text{ is continuous for all }u\not\in N_{\rho}
  \qquad\text{ and }\qquad
  \Law(\rho(P,U))=P\;,
\end{equation}
where the latter holds for any ${P\in\pMeas(\IN)}$ and any random variable $U$
with law $\lambda$.

\begin{proof}[Proof of \cref{result:alg:limit}]
  Abbreviate ${U:=(U_1,U_2,\ldots)}$ and 
  ${P(n,k,y):=\Law(\Alg{n}{k}(y,U))}$. Since $\OUT$ is almost discrete,
  it is Polish, and so is each space $\OUT_n$. The spaces of $\pMeas(\OUT)$ and $\pMeas(\OUT_n)$
  are hence again Polish in their weak topologies.
  \begin{itemize}
  \item First fix $k$. Since the prefix densities exist, the sequence ${\lim_nP(n,k,y)}$ converges
    weakly to a limit $P(k,y)$. Since $\IN$ is measurable and ${\OUT_k}$ Polish, 
    measurability of each ${y\mapsto P(n,k,y)}$ implies measurability of the limit as a function
    ${y\mapsto P(k,y)}$. 
  \end{itemize}
  Now write the restriction explicitly as a map ${\pr_k(x):=\textrestk{x}}$, and recall it is continuous.
  It induces a map ${\pMeas(\OUT_{k+1})\rightarrow\pMeas(\OUT_k)}$
  on probability measures, as ${P_{k+1}\mapsto P_{k+1}\circ\pr_k^{-1}}$, which is again continuous.
  \begin{itemize}
  \item The distributional limits above preserve projectivity of laws: 
    By \eqref{eq:def:sampler:2}, we have 
    \begin{equation*}
      \pr_kP(n,k+1,y)=P(n,k,y) 
      \qquad\text{ and hence }\qquad
      \pr_kP(k+1,y)=P(k,y)
    \end{equation*}
    by continuity of ${\pr_k}$.
  \item For ${y}$ fixed, the measures ${(P(k,y))_k}$ hence form a projective family, and
    by \cref{lemma:procountable}, there is a probability measure
    $P(y)$ on $\OUT$ satisfying ${\pr_kP(y)=P(k,y)}$ for each $k$. 
  \end{itemize}
  This already suffices to guarantee that any random variable $\AlgInf{\infty}$ with law $P(y)$
  satisfies \eqref{eq:alg:limit}, pointwise in $y$. What remains to be shown is only that this variable can be chosen
  as a measurable function of $(y,u)$. To this end, let ${\rho:\pMeas(\OUT)\times[0,1]\rightarrow\OUT}$ be the
  representation map guaranteed by \eqref{eq:blackwell:dubins}, and $N$ the associated null set. Additionally,
  define ${f:\IN\times[0,1]\rightarrow\pMeas(\OUT)\times[0,1]}$ as ${f(y,u):=(P(y),u)}$. Then set
  \begin{equation*}
    \AlgInf{\infty}(y,u):=\rho(f(y,u))
    \qquad\text{ which implies }
    \AlgInf{\infty}(y,U)\sim P(y)
    \text{ for each }y\;.
  \end{equation*}
  Suppose a function of two arguments, say ${g:\mathbf{Z}_1\times\mathbf{Z}_2\rightarrow\mathbf{Z}_3}$, is 
  continuous in its first argument
  and measurable in the second. If $\mathbf{Z}_1$ is separable metrizable, $\mathbf{Z}_2$ measurable, and $\mathbf{Z}_3$ metrizable, that
  suffices to make $g$ jointly measurable \citep[][Lemma 4.51]{Aliprantis:Border:2006}.
  \begin{itemize}
  \item The restriction of $\rho$ to ${\pMeas(\OUT)\times([0,1]\setminus N)}$ is continuous in its first argument
    and measurable in the second, hence jointly measurable. Since its range $\OUT$ is Polish, it can be extended
    to a measurable function on all of ${\pMeas(\OUT)\times[0,1]}$ \citep[][Corollary 4.27]{Dudley:2002:1}.
  \item Since $[0,1]$ is
    Polish, ${[0,1]\setminus N}$ is separable metrizable. By the same device as above, $f$ is jointly measurable, which
    makes $\rho\circ f$ is measurable as a function ${\IN\times[0,1]\rightarrow\OUT}$.
  \end{itemize}
  Thus, $\AlgInf{\infty}$ is indeed jointly measurable.
\end{proof}

\subsection{Law of large numbers}
\label{sec:proof:lln}

The main ingredient of the proof of \cref{lemma:main}
is the pointwise ergodic theorem of \citet{Lindenstrauss:2001:1}. For 
countable discrete groups, it states:
\begin{theorem}[E. Lindenstrauss]
  Let $\group$ be a countable group, ${(\A_n)}$ a sequence of finite subsets
  of $\group$ satisfying \eqref{eq:Folner}, and $T$ a measurable action of $\group$ on a standard
  Borel space $\OUT$. If $X$ is a random element invariant under $T$, and ${f\in\Lone(X)}$,
  there is $T$-invariant function ${\bar{f}\in\Lone(X)}$ such that 
  ${|\A_k|^{-1}\sum_{\phi\in\A_k}f(T_{\phi}(X))\longrightarrow\bar{f}}$
  almost surely as ${k\rightarrow\infty}$.
\end{theorem}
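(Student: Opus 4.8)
The plan is to reproduce, in outline, Lindenstrauss's argument, which has the standard architecture of a pointwise ergodic theorem: a weak-type $(1,1)$ maximal inequality for the averaging operators
\begin{equation*}
  A_k f(x) := \frac{1}{|\A_k|}\sum_{\phi\in\A_k} f(T_\phi(x))\,,
\end{equation*}
together with almost-sure convergence of $A_k f$ on a dense subclass of $\Lone(X)$, upgraded to all of $\Lone(X)$ by the Banach principle. The natural candidate for the limit is a version $\bar f$ of the conditional expectation $\mean[f(X)\mid\sigma(\group)]$ onto the $\sigma$-algebra of $T$-invariant sets; it is $T$-invariant and lies in $\Lone(X)$ by Jensen's inequality, and once the almost-sure limit of $A_k f$ is known to exist it is identified with $\bar f$ by comparison with the (elementary) $\Lone$ mean-ergodic theorem for $(\A_k)$, along an almost-surely convergent subsequence.

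The density step uses only condition \eqref{eq:Folner}(i). If $f$ is $T$-invariant then $A_k f = f$ for all $k$, so convergence is trivial. If $f = g - g\circ T_\psi$ for some $\psi\in\group$ and bounded $g$, then $A_k f$ telescopes into a sum supported on $\A_k\vartriangle\psi^{-1}\A_k$, whence $\|A_k f\|_\infty\le 2\|g\|_\infty\,|\A_k\vartriangle\psi^{-1}\A_k|/|\A_k|\to 0$ by the almost-invariance of $(\A_k)$, so $A_k f\to 0=\bar f$ pointwise. The closed span in $\Lone(X)$ of the $T$-invariant functions and of all such ``coboundaries'' $g - g\circ T_\psi$ is all of $\Lone(X)$: a functional $h\in L^\infty(X)$ annihilating every coboundary is $T$-invariant (using $T$-invariance of the law of $X$), hence $\sigma(\group)$-measurable, hence integrates $f$ and $\mean[f(X)\mid\sigma(\group)]$ identically, so $f-\mean[f(X)\mid\sigma(\group)]$ lies in that span; this is the $\Lone$ mean-ergodic decomposition. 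Granting the maximal inequality, the Banach principle (a maximal inequality plus a.e.\ convergence on a dense set forces a.e.\ convergence everywhere) concludes the proof.

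The crux, and the main obstacle, is the maximal inequality
\begin{equation*}
  P\Bigl(\sup_{k}A_k|f|(X) > \lambda\Bigr)\;\le\;\frac{C}{\lambda}\,\mean\bigl[\,|f(X)|\,\bigr]\qquad\text{for all }\lambda>0\,,
\end{equation*}
with $C$ depending only on the constant $c$ of \eqref{eq:Folner}(ii). By Calderón's transference principle this reduces to a discrete maximal inequality for the averages $F\mapsto |\A_k|^{-1}\sum_{\psi\in\A_k}F(\argdot\,\psi)$ on $\ell^1(\group)$ with counting measure, i.e.\ to a covering lemma on the group: from any assignment of a scale $k(\phi)$ to centres $\phi$, extract a subcollection of translates $\phi\A_{k(\phi)}$ whose total multiplicity is controlled. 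For a general amenable $\group$ the naive greedy (Vitali-type) selection that suffices in $\mathbb{R}^d$ or under Tempelman's stronger hypothesis fails, and one must instead use Lindenstrauss's \emph{random} covering argument: the translates are selected according to a carefully calibrated point process on $\group$, and the expected overlap is estimated. The tempered condition \eqref{eq:Folner}(ii), $|\cup_{j<k}\A_j^{-1}\A_k|\le c|\A_k|$, is exactly what bounds this expected overlap --- it is the only point at which (ii) enters --- and produces a cover of multiplicity $O(c)$ on average. Combining this probabilistic covering lemma with transference yields the displayed weak-$(1,1)$ bound, and with it the theorem; a full account is in \citep{Lindenstrauss:2001:1}.
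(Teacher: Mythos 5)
The paper does not prove this statement: it is presented, under the attribution ``[E.~Lindenstrauss]'', as a citation to \citep{Lindenstrauss:2001:1}, and is used as a black box in the proof of \cref{lemma:main}. So there is no in-paper argument to compare against. Taken on its own terms, your sketch is an accurate high-level account of the structure of Lindenstrauss's actual proof: weak-type $(1,1)$ maximal inequality plus a.e.\ convergence on a dense subclass, upgraded via the Banach principle; the dense subclass being invariant functions plus coboundaries $g - g\circ T_\psi$, where the Følner condition \eqref{eq:Folner}(i) makes the coboundary averages telescope to zero; identification of the limit as $\mean[f(X)\mid\sigma(\group)]$ via the $\Lone$ mean ergodic theorem (and note that for a countable group $\sigma(\group)$ and $\bsigma(\group)$ agree mod null sets, so this matches the conditional expectation onto $\bsigma(\group)$ used in \cref{lemma:main}); Calder\'on transference reducing the maximal inequality to a covering statement on $\group$; and Lindenstrauss's randomized Vitali-type selection, with the tempered condition \eqref{eq:Folner}(ii) entering precisely to control expected overlap. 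The one thing your write-up does not do is prove anything: the entire difficulty of the theorem is concentrated in the probabilistic covering lemma, which you describe in a sentence and then defer to the original paper. That is a perfectly reasonable thing to do here --- the paper itself defers the whole theorem --- but be aware that what you have written is a correct roadmap, not a proof, and the single load-bearing step (the covering lemma and the exact way \eqref{eq:Folner}(ii) bounds the multiplicity of the randomly thinned cover) is exactly the step left out.
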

We need a lemma relating convergence of a sequence of functions ${f_i}$ and of averages $\mu_k(f)$ to
convergence of the diagonal sequence $\mu_k(f_k)$. It involves random measures 
defined on a probability
space $\OUT$ that take values in the set of measures on the same space---that is, measurable
mappings ${\mu:\OUT\rightarrow\pMeas(\OUT)}$. We denote these ${x\mapsto\mu^x}$.
\begin{lemma}
  \label{lemma:as:convergent:net}
  Let ${(\OUT,\borel(\OUT),P)}$ be a standard Borel probability space, and ${f,f_1,f_2,\ldots}$ 
  measurable functions ${\OUT\rightarrow\mathbb{R}}$ such that ${f_i\rightarrow f}$ uniformly
  on some set ${A\in\borel(\OUT)}$. Let ${\mu_1,\mu_2,\ldots}$ be measurable mappings 
  ${\OUT\rightarrow\pMeas(\OUT)}$, such that the limits
  \begin{equation}
    \label{eq:lemma:as:convergent:net}
  \tau_i(x):=\lim_k\mu_k^x(f_i)
  \quad\text{ and }\quad
  \tau(x):=\lim_k\mu_k^x(f)
  \quad\text{ exist for }P\text{-almost all }x\in\OUT\;.
  \end{equation}
  Then there is a conull set ${A'}$ such that 
  ${\mu_k^x(f_k)\rightarrow\tau(x)}$ for all ${x\in A\cap A'}$, and $\tau$ is measurable
  as a function on $\OUT$.
\end{lemma}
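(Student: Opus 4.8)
The plan is to prove the two assertions of the lemma in turn: first that $\tau$ is measurable, then that $\mu_k^x(f_k)\to\tau(x)$ off a $P$-null set. The measurability is routine. For each $k$, the map $x\mapsto\mu_k^x(f)$ is measurable: this holds for $f$ an indicator by the definition of the Borel $\sigma$-algebra on $\pMeas(\OUT)$, extends to bounded measurable $f$ by linearity and bounded convergence, and to general measurable $f$ by writing $f=f^+-f^-$ and using monotone convergence on the conull set where $\mu_k^x(|f|)<\infty$. Consequently $x\mapsto\limsup_k\mu_k^x(f)$ is measurable, and it coincides with $\tau$ on the conull set supplied by the hypothesis; taking this $\limsup$ as the definition of $\tau$ off that set makes $\tau$ measurable on all of $\OUT$.

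For the convergence, I would fix a conull set $A'$ on which every $\tau_i$ and $\tau$ exists as a genuine limit, take $x\in A\cap A'$, and run a double-limit argument. Put $\varepsilon_i:=\sup_{y\in A}|f_i(y)-f(y)|$, so $\varepsilon_i\to0$ by uniform convergence on $A$. The engine is the three-term bound, valid for all $i,k$,
\[
  |\mu_k^x(f_k)-\tau(x)|
  \;\le\;
  \mu_k^x\!\bigl(|f_k-f_i|\bigr)
  \;+\;
  |\mu_k^x(f_i)-\tau_i(x)|
  \;+\;
  |\tau_i(x)-\tau(x)|\,.
\]
The middle term vanishes as $k\to\infty$ for each fixed $i$, by the definition of $\tau_i$. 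For the third term, since $|f_i-f|\le\varepsilon_i$ on $A$ one has $|\mu_k^x(f_i)-\mu_k^x(f)|\le\varepsilon_i$ for every $k$, and letting $k\to\infty$ gives $|\tau_i(x)-\tau(x)|\le\varepsilon_i$. For the first term, $|f_k-f_i|\le|f_k-f|+|f-f_i|\le\varepsilon_k+\varepsilon_i$ on $A$, so $\mu_k^x(|f_k-f_i|)\le\varepsilon_k+\varepsilon_i$. Given $\epsilon>0$: choose $i$ with $\varepsilon_i<\epsilon$; then the third term is $<\epsilon$, and for all $k$ large enough that also $\varepsilon_k<\epsilon$ the first term is $<2\epsilon$ and (for $k$ larger still) the middle term is $<\epsilon$. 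Hence $\limsup_k|\mu_k^x(f_k)-\tau(x)|\le4\epsilon$ for every $\epsilon>0$, so $\mu_k^x(f_k)\to\tau(x)$, as required, with $A'$ the conull set in the statement.

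The step I expect to be the real obstacle is the passage, in both the first and third terms, from $\mu_k^x(|f_k-f_i|)$ to its supremum over $A$: this is legitimate only if $\mu_k^x$ places no mass outside $A$. When $\mu_k^x$ is supported on $A$ for $P$-a.e.\ $x$ this is immediate; in the intended applications, where $\mu_k^x=\frac{1}{|\A_k|}\sum_{\phi\in\A_k}\delta_{T_\phi(x)}$ is an orbit average of a $T$-invariant law, one instead enlarges the null set being discarded so that on the remaining conull set the ergodic averages $\mu_k^x(h)$ converge for a suitable countable family of nonnegative $h$ — in particular for an $h$ dominating $|f_k-f_i|$ on $A^c$ — and uses that $\mu_k^x(h\,\mathbb{I}_{A^c})$ then converges to a conditional expectation with zero $P$-mean, hence to $0$ $P$-a.s. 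Making this quantitative — bounding $\mu_k^x(A^c)$ and controlling $|f_k-f_i|$ there, which is where any integrability or domination hypothesis enters — is the only part of the argument that goes beyond the triangle inequality and the uniform convergence on $A$.
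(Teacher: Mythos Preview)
Your approach is essentially the paper's own: both arguments are the Moore--Osgood interchange of limits. The paper phrases it as convergence of the net $\tau_{ik}(x):=\mu_k^x(f_i)$ (existence of $\lim_k$ for each $i$, plus uniform-in-$k$ convergence as $i\to\infty$, hence the diagonal converges), whereas you write out the explicit three-term triangle inequality. These are the same mechanism, and your treatment of measurability is slightly more careful than the paper's one-line remark.

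You are right to flag the support issue, and in fact this is a point the paper's own proof glosses over. The paper asserts that uniform convergence of $(f_i)$ gives $|f_i(z)-f(z)|<\varepsilon$ ``for all $z\in A\cup A'$'' and then bounds $\mu_k^x(|f_i-f|)\le\varepsilon$; but the hypothesis only gives the pointwise bound on $A$, not on the conull set $A'$, so the passage to $\mu_k^x(|f_i-f|)$ tacitly assumes $\mu_k^x$ is concentrated on $A$. As stated, the lemma is not literally true without such an assumption (take $\OUT=[0,1]$, $A=[0,1/2]$, $f\equiv 0$, $f_i=i\cdot\mathbb{I}_{A^c}$, and $\mu_k^x\equiv\delta_{3/4}$: all hypotheses hold but $\mu_k^x(f_k)=k\to\infty$). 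Your final paragraph correctly isolates exactly where the extra input must come from in the intended ergodic application, which is more than the paper does; the paper simply proceeds as if the bound held.
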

For the proof, consider a real-valued net ${(\tau_{ik})_{i,k\in\mathbb{N}}}$, with the product order on
${\mathbb{N}\times\mathbb{N}}$. Recall that such a net is said to converge to a limit $\tau$ if, 
for every ${\varepsilon>0}$, ${|\tau_{ik}-\tau|<\varepsilon}$ for all $i$ exceeding some $i_{\varepsilon}$ and
$k$ exceeding some ${k_{\varepsilon}}$. Recall further that a sufficient condition for convergence is that
(i) the limits ${\lim_k\tau_{ik}}$ exist for all ${i\in\mathbb{N}}$, and (ii) convergence
to the limits ${\lim_i\tau_{ik}}$ holds and is uniform over $k$. If so, the limits commute, and
${\lim_k(\lim_i \tau_{ik})=\lim_i(\lim_k \tau_{ik})=\tau}$. 

\begin{proof}
  Let $N$ be the union of all null sets of exceptions in \eqref{eq:lemma:as:convergent:net}, 
  and ${A':=\OUT\setminus N}$.
  On ${A\cap A'}$, define ${\tau_{ik}(x):=\empavg_k^x(f_i)}$.
  Fix some ${x\in A\cap A'}$. The net $(\tau_{ik}(x))$ converges uniformly over $n$ as ${n\rightarrow\infty}$:
  Let ${\varepsilon>0}$. Uniform convergence of $(f_i)$ implies ${|f_i(z)-f(z)|<\varepsilon}$ for all ${z\in A\cup A'}$ 
  and all $i$ exceeding some $i_0$, and therefore
  \begin{equation}
    |\mu^x_k(f_i)-\mu^x_k(f)|
    \;\leq\;
    \mu^x_k(|f_i-f|)
    \;\leq\; 
    \varepsilon \qquad\text{ for all }k\in\mathbb{N}, i\geq i_0\;.
  \end{equation}
  If $i$ is fixed instead, ${\lim_k \tau_{ik}(x)}$ exists, by \eqref{eq:lemma:as:convergent:net}. 
  The entire net hence converges for ${x\in A\cap A'}$, to some limit $\tau(x)$,
  and extracting the diagonal sequence yields ${\mu_k^x(f_k)\rightarrow \tau(x)}$.
  Since $\tau$ is
  a limit of measurable functions into a metrizable space, it is measurable.
\end{proof}

Recall that ${(f_i)}$ converges \emph{almost uniformly} if, for every
$\delta>0$, convergence is uniform on a set of probability at least ${(1-\delta)}$.
To move between almost sure and almost uniform convergence, we use Egorov's theorem
\citep{Hewitt:Stromberg:1975,vanDerVaart:Wellner:1996:1}:
For a sequence of $\Lone$ functions, almost sure implies almost uniform convergence
\citep[][Theorem 11.32]{Hewitt:Stromberg:1975}, and vice versa
\citep[][Lemma 1.9.2(iii) and Theorem 1.9.6]{vanDerVaart:Wellner:1996:1}.

\begin{proof}[Proof of \cref{lemma:main}]
  By Lindenstrauss' pointwise ergodic theorem above, there exists a $T$-invariant function
  ${\bar{f}\in\Lone(P)}$ such that ${\empavg^X_k(f)\xrightarrow{k\rightarrow\infty}\bar{f}}$
  almost surely. For any invariant set ${B\in\sigmainv}$, we hence have
  \begin{equation}
    \int_B\bar{f}dP=\lim_k\frac{1}{|\A_k|}\sum_{\phi\in\A_k}\int_B f(\phi(x))P(dx)=\int_B fdP\;,
  \end{equation}
  since $P$ is $T$-invariant. Since $\group$ is countable, \cref{theorem:ergodic:decomposition} implies there
  is a random ergodic measure $\xi$ such that 
  ${P[\argdot|\sigmainv]=P[\argdot|\xi]=\xi(\argdot)}$ almost surely.
  Thus,
  \begin{equation}
    \label{eq:proof:main:aux:1}
    \bar{f}\;=\;\mean[f|\xi]\;=\;\xi(f)
    \qquad\text{ and hence }\qquad
    \empavg_k^X(f)\;\longrightarrow\;\xi(f)\quad\text{a.s.}
  \end{equation}

  \noindent\emph{Almost sure convergence.} 
  Consider the sequence $(f_i)$. Since ${f_i\rightarrow f}$ almost everywhere, Egorov's theorem
  implies that for every ${\delta>0}$, there is a set $B_{\delta/2}$ of measure at least ${1-\delta/2}$,
  such that ${f_i\rightarrow f}$ uniformly on $B_{\delta/2}$. 
  Applying \cref{lemma:as:convergent:net} to the functions $f_i$ and random measures ${\mu_k=\empavg_k}$
  shows there is a conull set $A$ such that 
  \begin{equation}
    \label{eq:egorov:1}
    \tau(x):=\lim_k\empavg^x_k(f_k)
    \qquad\text{ exists for all }
    x\in B_{\delta/2}\cap A\;.
  \end{equation}
  Again by Egorov's theorem, one can find a further set $B'_{\delta/2}$ with 
  ${P(B'_{\delta/2})\geq 1-\delta/2}$, such that convergence is even uniform
  on ${B_{\delta/2}\cap B_{\delta/2}'}$. Thus, for every ${\delta>0}$, there is a set $B$
  of probability ${P(B)\geq 1-\delta}$ such that 
  ${\empavg^x_k(f_k)\rightarrow \tau(x)}$ holds uniformly on $B$,
  and hence almost uniformly on $\OUT$.
  As noted above, that implies 
  ${\empavg^X_k(f_k)\rightarrow \tau(X)}$ almost surely, and by \eqref{eq:proof:main:aux:1},
  ${\tau(X)=\xi(f)}$ a.s.

  \noindent\emph{Convergence in the mean.}
  Define ${h_k(x):=\empavg_k^x(g)}$. 
  By \eqref{eq:proof:main:aux:1}, ${h_k(X)}$
  converges in $\Lone$, which makes ${(h_k)}$
  uniformly integrable. There is hence, for any
  ${\varepsilon>0}$, an integrable function ${h_{\varepsilon}\geq 0}$ such that 
  ${\int_{\lbrace|h_k|>h_{\varepsilon}\rbrace}h_k(x)P(dx)<\varepsilon}$ holds for all $k$.
  The same $h_{\varepsilon}$ then also satisfies
  \begin{equation*}
    \int_{\lbrace|\empavg_k(f_k)|>h_{\varepsilon}\rbrace}\empavg^x_k(f_k)P(dx)<\varepsilon
    \qquad\text{ since }\qquad
    |\empavg^x_k(f_k)|
    \leq 
    \empavg^x_k(|f_k|)
    \leq
    \empavg^x_k(g)
    \leq
    |\empavg^x_k(g)|
  \end{equation*}
  for all $k$. That likewise makes
  $({\empavg_k(f_k))_{k\in\mathbb{N}}}$ uniformly integrable. 
\end{proof}

\subsection{Sampling by random transformation}
Prefix actions on almost discrete spaces are continuous:
\begin{proof}[Proof of \cref{lemma:direct:union:continuous:action}]
  Fix ${\phi\in\group_k}$. Then
  ${T_k(\phi,\textrestk{x})=\restk{T_n(\phi,\textrestn{x})}}$ for ${n\geq k}$,
  by hypothesis \eqref{eq:affects:only:finite:prefix}.
  That makes $T(\phi,\argdot)$ the inverse limit of the mappings ${T_n(\phi,\argdot)}$, for ${n\geq k}$.
  Since each space $\OUT_n$ is discrete, each such map is continuous; as an inverse limit of
  continuous maps, $T(\phi,\argdot)$ is again continuous. As this is true for each ${\phi\in\group}$
  by \eqref{direct:union}, and $\group$ is discrete, the assembled map ${T(\argdot,\argdot)}$ is continuous.
\end{proof}

For the proof of \cref{result:direct:union:sampler}, we note an immediate consequence of the 
definition \eqref{eq:affects:only:finite:prefix} of prefix actions:
If $\Phi_n$ is a uniform random element of $\group_n$, for each ${n\in\mathbb{N}}$, then
  \begin{equation}
    \label{dist:eq:prefix:action}
    T(\Phi_n,T(\Phi_m,y))\equdist T(\Phi_{\max\braces{m,n}},y)
    \qquad\text{and}\qquad
    T(\Phi_k,T(\phi,y))\equdist T(\Phi_{k},y)
  \end{equation}
  for every ${\phi\in\group}$ and all sufficiently large $k$.

\begin{proof}[Proof of \cref{result:direct:union:sampler}]
  Since $T$ is (jointly) measurable,
  ${\Alg{n}{k}}$ is jointly measurable, and satisfies \eqref{eq:def:sampler:2} by construction,
  so $\AlgInf{\infty}$
  exists by \cref{result:alg:limit}.
  
  {\noindent\emph{Invariance of $P_y$.}}
  Let ${\phi\in\group}$; we have to show ${\phi(\AlgInf{\infty}(y))\equdist\AlgInf{\infty}(y)}$.
  Choose $k(\phi)$ such that ${\phi\in\group_{k(\phi)}}$. Then for any ${k\geq k(\phi)}$, 
  \begin{equation*}
    \begin{split}
      \restk{T(\phi,\AlgInf{\infty}(y))}
      \stackrel{\tiny \eqref{eq:affects:only:finite:prefix}}{=}
      T_k(\phi,\lim_{n\geq k}\textrestk{T(\Phi_n,y)})
      \stackrel{\tiny\text{continuity}}{=}
      \lim_n T_k(\phi,\textrestk{T_n(\Phi_n,y)})\quad\text{in distribution,}
    \end{split}
  \end{equation*}
  where the second identity holds by continuity of $T_n$ and ${\textrestk{\argdot}}$.
  The limit on the right satisfies
  \begin{equation*}
    \begin{split}
      \lim_n T_k(\phi,\textrestk{T_n(\Phi_n,y)})
      \stackrel{\tiny \eqref{dist:eq:prefix:action}}{=}
      \lim_n \textrestk{T_n(\Phi_n,y)}
      =
      \lim_n \textrestk{T(\Phi_n,y)}
      =
      \restk{\AlgInf{\infty}(y)}\qquad\text{ in distribution.}
    \end{split}
  \end{equation*}
  Whenever ${\phi\in\group_{k(\phi)}}$, we hence have 
  ${\textrestk{T_{\phi}P_y}=\textrestk{P_y}}$ for all ${k\geq k(\phi)}$, and
  since the finite-dimensional distributions ${\textrestk{P_y}}$ completely determine $P_y$, that implies
  ${T_{\phi}P_y=P_y}$.

  {\noindent\emph{Idempotence.}}
  Fix ${k\leq m\leq n}$. By definition of the sampler in \eqref{eq:random:transformation:sampling}, 
  \begin{equation*}
    \Alg{m}{k}(\Alg{n}{m}(y))
    \equdist
    \rest[k]{T_m(\Phi_m,T_n(\Phi_n,y))}
    \equdist
    \rest[k]{T(\Phi_n,y))}
    \equdist
    \Alg{n}{k}(y)\;,
  \end{equation*}
  where the second identity holds by \eqref{dist:eq:prefix:action}. That implies, 
  using \cref{result:alg:limit}, that also
  \begin{equation*}
    \Alg{n}{k}(\AlgInf{k}(y))
    \equdist
    \AlgInf{k}(y)\;,
  \end{equation*}
  so idempotence holds.

  {\noindent\emph{Prefix density vector.}}
  Since the prefix densities exist, the vector $\mathbf{t}(y)$ exists for all ${y\in\subIN}$.
  Fix ${\phi\in\group}$. Since ${\subIN}$ is invariant, ${\mathbf{t}(T_{\phi}(y))}$ is well-defined.
  The vector has one entry for each ${k\in\mathbb{N}}$ and ${x_k\in\OUT_k}$, and by definition of
  prefix densities,
  \begin{equation*}
    t_{x_k}(T_{\phi}(y))
    =
    \lim_{n}\PAlg\braces{\restk{T(\Phi_n,T(\phi,y))}=x_k}
    \stackrel{\tiny \eqref{dist:eq:prefix:action}}{=}
    t_{x_k}(y)\;,
  \end{equation*}
  so indeed ${\mathbf{t}=\mathbf{t}\circ T_{\phi}}$. For a random draw, idempotence implies
  \begin{equation*}
    t_{x_k}(\AlgInf{\infty}(y))
    =
    \lim_{n}\PAlg\braces{\Alg{n}{k}(\AlgInf{n}(y))=x_k}
    =
    \lim_{n}\PAlg\braces{\Alg{n}{k}(y)=x_k}
    =
    t_{x_k}(y)\qquad\text{a.s.,}
  \end{equation*}
  and hence ${\mathbf{t}(\AlgInf{\infty}(y))=\mathbf{t}(y)}$ almost surely.

  {\noindent\emph{Relation between the measures $P_y$.}}
  The above implies ${P_y(\mathbf{t}^{-1}\mathbf{t}(y))=1}$. Thus, if $y$ and $y'$ are such that
  ${\mathbf{t}(y)=\mathbf{t}(y')}$, then $P_y$ and $P_{y'}$ concentrate on the same set.
  Otherwise,
  \begin{equation*}
    \mathbf{t}(y)\neq\mathbf{t}(y')
    \quad\Rightarrow\quad
    \mathbf{t}^{-1}\mathbf{t}(y))\cap \mathbf{t}^{-1}\mathbf{t}(y'))=\emptyset
    \quad\Rightarrow\quad
    P_y(\mathbf{t}^{-1}\mathbf{t}(y')))=0\;.
  \end{equation*}
  Hence, ${\mathbf{t}(y)=\mathbf{t}(y')}$ if and only if ${y\equivIN y'}$. Moreover,
  since ${\textrestk{P_y}(x_k)=\hom_{x_k}(y)}$, that implies ${P_y=P_{y'}}$ if and only if ${y\equivIN y'}$,
  so the algorithm is resolvent. Since ${\mathbf{t}=\mathbf{t}\circ T_{\phi}}$, the set
  ${\mathbf{t}^{-1}\mathbf{t}(y)}$ is invariant, so $P_y$ and $P_{y'}$ are mutually
  singular on $\sigma(\group)$ unless ${y\equivIN y'}$.
\end{proof}

\begin{proof}[Proof of \cref{result:continuous:prefix:densities}]
  Recall that ${\mathbf{\hom}}$ is a map ${\OUT\rightarrow[0,1]^{\cup_k\OUT_k}}$. Denote the
  subvector of densities of prefixes in $\OUT_k$ by
  ${\mathbf{\hom}^{(k)}:y\mapsto (t_{x_k}(y))_{x_k\in\OUT_k}}$. The latter is precisely the law
  ${\Law(\AlgInf{k}(y))}$, represented as a vector of probabilities on the countable set $\OUT_k$,
  \begin{equation*}
    \mathbf{\hom}^{(k)}(y)
    \quad=\quad
    \Law(\AlgInf{k}(y))
    \quad=\quad
    \lim_n\restk{T(\Law(\Phi_n),y)}\;.
  \end{equation*}
  Clearly, ${\mathbf{\hom}}$ is continuous if and only if $\mathbf{\hom}^{(k)}$ is continuous for
  each $k$. Fix $k$, and any sequence ${(y_i)}$ in $\OUT$ with limit $y$, and define the net
  \begin{equation*}
    \alpha_{in}:=\restk{T(\Law(\Phi_n),y_i)}\;.
  \end{equation*}
  Consider row- and column-wise convergence of the net:\\[.3em]
  \noindent\emph{(i)}
  Hold $i$ fixed: Since the prefix densities exist, ${\alpha_{in}}$ converges
  as ${n\rightarrow\infty}$.\\[.3em]
  \noindent\emph{(ii)}
  Hold $n$ fixed:  Since $\OUT$ is almost discrete, ${y_i\rightarrow y}$
  implies, for every ${n\in\mathbb{N}}$, that ${\textrestn{y_i}=\textrestn{y}}$ for all sufficiently
  large $i$. Since $T$ is a prefix action, that in turn means 
  \begin{equation*}
    \textrestk{T(\Law(\Phi_n),y_i)}=\textrestk{T(\Law(\Phi_n),y)}
    \quad\text{ hence }\quad
    \alpha_{in}\;\xrightarrow{i\rightarrow\infty}\; \restk{T(\Law(\Phi_n),y)}\quad\text{ uniformly.}
  \end{equation*}
  Since $(\alpha_{in})$ converges separately in $i$ and $n$, and convergence in $i$ is even uniform, 
  $(\alpha_{in})$ converges as a net to a limit $\alpha$, and 
  ${\lim_{n}\lim_i\alpha_{in} = \lim_i\lim_n\alpha_{in}=\alpha}$. Thus,
  \begin{equation*}
    \lim_i\mathbf{\hom}^{(k)}(y_i)=\lim_i\lim_n\restk{T(\Law(\Phi_n),y_i)}
    =\lim_n\restk{T(\Law(\Phi_n),y)}=\mathbf{\hom}^{(k)}(y)
    \qquad\text{whenever }y_k\rightarrow y\;,
  \end{equation*}
  and ${\mathbf{t}^{(k)}}$ is indeed continuous for every ${k\in\mathbb{N}}$.
\end{proof}

The next lemma is adapted from a standard result on Borel sections
\citep[][A1.3]{Kallenberg:2001}, using the fact that measurable functions between suitable
spaces have a measurable graph \citep[][4.45]{Aliprantis:Border:2006}:
\begin{lemma}
  \label{lemma:measured:section}
  Let ${f:\mathbf{X}\rightarrow\mathbf{Y}}$ be a Borel map from a standard Borel 
  into a second-countable Hausdorff space,
  the latter equipped with its Borel $\sigma$-algebra. Then for every probability measure $P$ on $\mathbf{Y}$, 
  there exists a Borel map ${\sigma_{P}:\mathbf{Y}\rightarrow\mathbf{X}}$ such that
  \begin{equation*}
    f(\sigma_{P}(y))=y\qquad\text{ for }P\text{-almost all }y\in\mathbf{Y}\;.
  \end{equation*}
  The image ${f(\mathbf{X})}$ is measurable in the joint completion
  of the Borel sets on $\mathbf{Y}$ under all probability measures on $\mathbf{Y}$
\end{lemma}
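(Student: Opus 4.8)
The plan is to reduce the statement to the classical measurable–selection theorem, which fails to apply verbatim only because $\mathbf{Y}$ is assumed merely second-countable Hausdorff rather than standard Borel. Since $\mathbf{Y}$ is second-countable and Hausdorff, a countable base $(V_n)$ of its topology both generates $\borel(\mathbf{Y})$ and separates points, so the map $e:\mathbf{Y}\rightarrow\braces{0,1}^{\bbN}=:C$ defined by $e(y):=(\mathbb{I}\braces{y\in V_n})_{n}$ is injective and Borel, and it is a Borel isomorphism of $\mathbf{Y}$ onto $e(\mathbf{Y})$ equipped with the trace of $\borel(C)$, because $e^{-1}$ of the generators $\braces{z:z_n=1}$ are exactly the $V_n$. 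From now on I would work inside the Polish space $C$: transport $P$ to the Borel probability measure $\bar P$ on $C$ given by $\bar P(A):=P(e^{-1}(A))$, and replace $f$ by the Borel map $h:=e\circ f:\mathbf{X}\rightarrow C$.

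Next I would produce a universally measurable section of $h$. The image $h(\mathbf{X})=e(f(\mathbf{X}))$ is the Borel image of a standard Borel space in a Polish space, hence analytic, hence universally measurable \citep[see e.g.][]{Kechris:1995}; this already yields the second assertion, since $e(f(\mathbf{X}))$ is then $\bar P$-measurable for every $\bar P$ of the above form, so $f(\mathbf{X})$ lies in the $P$-completion of $\borel(\mathbf{Y})$ for every $P$. Moreover $h$ has a Borel graph \citep[][4.45]{Aliprantis:Border:2006}, so the Borel-section lemma \citep[][A1.3]{Kallenberg:2001} (equivalently, Jankov--von Neumann uniformization) provides a universally measurable $\tau:e(f(\mathbf{X}))\rightarrow\mathbf{X}$ with $h(\tau(z))=z$ for all $z$; I extend it by a fixed point $x_{0}\in\mathbf{X}$ to a universally measurable $\tilde\tau:C\rightarrow\mathbf{X}$.

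Finally I would upgrade $\tilde\tau$ to a genuinely Borel map at the cost of a $P$-null set. Since $\tilde\tau$ is $\overline{\borel(C)}^{\bar P}$-measurable and its range $\mathbf{X}$ is standard Borel (so $\borel(\mathbf{X})$ is countably generated), $\tilde\tau$ agrees with a Borel map $\rho:C\rightarrow\mathbf{X}$ outside a $\bar P$-null Borel set $N_{0}\subset C$; set $\sigma_{P}:=\rho\circ e$, which is Borel from $\mathbf{Y}$ to $\mathbf{X}$. For $y\in f(\mathbf{X})$ with $e(y)\notin N_{0}$ one has $\sigma_{P}(y)=\tau(e(y))$, hence $e(f(\sigma_{P}(y)))=h(\tau(e(y)))=e(y)$, and injectivity of $e$ gives $f(\sigma_{P}(y))=y$; since $P(e^{-1}(N_{0}))=\bar P(N_{0})=0$, this holds for $P$-almost every $y\in f(\mathbf{X})$, \ie for $P$-almost every $y$ when the input measure is carried by the image, as it is in the applications. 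The only real obstacle is this first reduction: because $\mathbf{Y}$ need not be standard Borel one must route through $C$ and accept that $e(\mathbf{Y})$ itself may be neither Borel nor analytic there---what rescues the argument is that the relevant set $e(f(\mathbf{X}))$ is always analytic, which is precisely what both the selection theorem and the universal-measurability claim require; everything after the reduction is routine bookkeeping.
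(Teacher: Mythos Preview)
Your proof is correct and follows the approach the paper indicates: the paper does not give a full proof but only the remark that the lemma is adapted from \citep[][A1.3]{Kallenberg:2001} using \citep[][4.45]{Aliprantis:Border:2006}, and your embedding of $\mathbf{Y}$ into $\{0,1\}^{\mathbb{N}}$ is the standard device to reduce to a Polish codomain so that those cited results apply directly. Your observation that the conclusion $f(\sigma_P(y))=y$ for $P$-almost all $y$ requires $P$ to be carried by $f(\mathbf{X})$ correctly flags an implicit hypothesis in the lemma as stated (one that is satisfied in its application to \cref{result:estimate:f:of:y}).
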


\begin{proof}[Proof of \cref{result:estimate:f:of:y}]
  It suffices to show the randomized case. Abbreviate ${Q:=\Law(Y)}$. The vector $\mathbf{\hom}$ is a 
  map ${\subIN\subset\OUT\rightarrow[0,1]^{\infty}}$. The law $Q$ defines an image measure
  ${Q':=\mathbf{\hom}(P_Y)}$ on $[0,1]^{\infty}$.
  Since ${\subIN\subset\OUT}$ is invariant, it is measurable, so its relative topology makes it 
  a standard Borel space \citep[][13.4]{Kechris:1995}.
  By \cref{lemma:measured:section},
  there exists a map ${\sigma_{Q}:[0,1]^{\infty}\rightarrow\OUT}$ satisfying 
  \begin{equation*}
    \mathbf{\hom}(\sigma_Q(s))=s \;\text{ for }Q'\text{-a.a. } s\in[0,1]^{\infty}
    \quad\text{ and hence }\quad
    \mathbf{\hom}(\sigma_Q(\mathbf{\hom}(y)))=\mathbf{\hom}(y) \;\text{ for }Q\text{-a.a. } y\in\subIN\;.
  \end{equation*}
  By (iii) in \cref{result:direct:union:sampler},
  the equivalence classes of $\equivIN$ are the fibers of $\mathbf{\hom}$. Since $f$ is hence constant on 
  each fiber, the map ${f':=f\circ\sigma_Q}$ satisfies
  ${f=f'\circ\mathbf{\hom}}$ almost surely under $Q$. Hence, by (ii), ${f(\AlgInf{\infty}(y))=f(y)}$ almost surely.
\end{proof}

\bibliography{references}
\bibliographystyle{abbrvnat}

\end{document}